\newcommand{\ad}{\rotatebox[origin=c]{270}{$A$}}
\title{Adjoinable Homology}
\author{Anastasis Kratsios}
\begin{document}
\maketitle
%*******************************************************
% Abstract
%*******************************************************

\pdfbookmark[0]{Abstract}{Abstract}
\chapter*{Abstract}
The notion of a duality between two derived functors as well as an extension theorem for derived functors to larger categories in which they need not be defined is introduced.  These ideas are then applied to extend and study the coext functors to an arbitrary coalgebra.  A new homology theory theory is then built therefrom and is shown to exhibit certain duality relations to the Hochschild cohomology of certain coalgebras.  Lastly, a certain exceptional type of coalgebra is introduced and it is used to make explicit connections between this new homology theory and the continuous cohomology of this exceptional algebra's pro-finite dual algebra.

\break

\pdfbookmark[1]{Foreword}{Foreword}
\chapter*{Foreword}

The Hochschild (co)homology of an algebra is a central if not the central tool in non-commutative geometry.  The central objective of this paper is to develop a Homology theory on coalgebras acting dual to the Hochschild cohomology thereon, which as of yet has no homological counterpart.  
The process will be to prove, in chapter 1, three abstract theorems, and introduce the notion of a pseudo-derived functor extending derived functor onto larger categories on which they need not be defined.  \\
Then in chapter 2 to review some of the theory introduced by \textit{"Khaled AL-Takhman"}, in his article \textit{"Equivalences of Comodule Categories for Coalgebras over Rings"} and by \textit{L. Abrams and C. Weibel} in their article \textit{"Cotensor products of modules"}, all unsighted and unproven results derive therefrom or are general knowledge in the field.  \\
Chapter 3 then applies the abstract theory to this setting giving a duality theorem between Hochschild cohomology and adjoinable homology on suitable coalgebra, by means of the pseudo-derived functors of the cohom functor \textit{(which itself is seldom defined)}.  A few properties of this theory are briefly discussed and leading into chapter 4, which is nothing more than a brief review of pro-finite algebras, their relevant relationships to coalgebras and an isomorphism theorem in homology proven by Weibel and Abrams.  \\
Finally The work closes in chapter 5 with the exploration of a certain breed of coalgebras are introduced and lightly explored.  They will provide very concrete computational links between the more classical continuous Hochschild homology theory the coalgebra’s pro-finite dual counterpart.  Finally the paper closes with a special case of the finite theory, and some personal acknowledgements.  
%*******************************************************
% Table of Contents
%*******************************************************
\pdfbookmark[1]{\contentsname}{tableofcontents}

\setcounter{tocdepth}{2} % <-- 2 includes up to subsections in the ToC
\setcounter{secnumdepth}{3} % <-- 3 numbers up to subsubsections

\tableofcontents

%---
%------------------------------------------------------------------
%  Now to insert the used theorem environments
%------------------------------------------------------------------
\newtheorem{pdefn}{Prototypical Definition}
\newtheorem{defn}{Definition}
\newtheorem{lem}{Lemma}
\newtheorem{prop}{Proposition}
\newtheorem{cor}{Corollary}
\newtheorem{thrm}{Theorem}
\newtheorem{dthrm}{Dual Theorem}
\newtheorem{esax}{Eilenberg–Steenrod Axiom}
\newtheorem{dcax}{$\mathfrak{D}$-$\mathfrak{C}$-Axiom}

\newtheorem{ex}{Example}

%-------------------------
%Introductory Portion
%--------------------------

%  The importance of Differnetial Forms and Grothendieck's Agebraic de Rham Theorem

\chapter{General Abstract Non-sense}
\subsection*{Foreword}
Before moving into specifics three abstracted results will be introduced, a spectral sequence, a related duality theorem, as well the notion of a familly of pseudo-derived functor, extending a family of derived functors to a larger category on which they need not be defined by means of an extension theorem.  

Specifically, a very general spectral sequence is presented and show to converge given a small assumption.  The notion of an F-G-derived duality between two functors is introduced and shown to exist given the assumption made above.  

\section{A Spectral Sequence}
For the duration of this section let $\mathscr{A}$ be an Abelian category with enough projectives and injectives, moreover set $F,$ $G$ to be biendofunctors on $\mathscr{A}$, that is bifunctors such that $F,G :\mathscr{A} \times \mathscr{A} \rightarrow \mathscr{A}$.

%  Extra assumption addin when appropriate
%  Furthermore, suppose  F is covariant left exact in both imputs \textit{(that is, $F(-,?)$ and $F(?,-)$) are both left exact and covariant}.  Similarly, assume G to be right exact in both imputs and to be contravariant in the first imput \textit{(that is $G(?,-)$ )} and covariant in the second.  

\begin{defn}
\textbf{F-G pivots}
Consider two biendofuntors functos F,G on $\mathscr{A}$.  Then a pair of \textit{(possibly the same)} objects $<P,I>$ in $\mathscr{A}$ are called \textbf{F-G pivots} if for every object $A \in \mathscr{A}$ there are isomorphisms $\psi_A$ in $\mathscr{A}$ with: $\psi_A : F(P,A) \cong G(I,A)$.  

\end{defn}
This can be rephrased as follows:  
\begin{prop}
For two bifunctos F-G on $\mathscr{A}$ the pair $<P,I>$ is an $F-G$ pivot if and only if the endofunctors $F(P,-) , G(I,-) : \mathscr{A} \rightarrow \mathscr{A}$ are naturally equivalent.  
\end{prop}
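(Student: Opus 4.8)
The plan is to prove both implications by directly unwinding the definition of a natural equivalence and matching it against the defining condition of an $F$-$G$ pivot. Recall that a natural equivalence between the endofunctors $T = F(P,-)$ and $T' = G(I,-)$ is precisely a natural transformation $\psi : T \Rightarrow T'$ every component $\psi_A : F(P,A) \to G(I,A)$ of which is an isomorphism in $\mathscr{A}$. Spelled out, this demands that each $\psi_A$ be invertible and that for every morphism $f : A \to B$ in $\mathscr{A}$ the naturality square
\[
G(I,f) \circ \psi_A = \psi_B \circ F(P,f)
\]
commutes as a diagram of maps $F(P,A) \to G(I,B)$. The whole content of the proposition is thus a comparison of the data demanded here with the pointwise data provided by Definition 1.

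First I would dispatch the easy implication. If $F(P,-)$ and $G(I,-)$ are naturally equivalent, then we are handed, as part of the definition, a family of isomorphisms $\psi_A : F(P,A) \cong G(I,A)$ indexed by the objects $A \in \mathscr{A}$. This is exactly the data that the definition of an $F$-$G$ pivot requires, so $\langle P, I \rangle$ is a pivot and nothing further is needed, since the pivot condition asks only for the \emph{existence} of such isomorphisms object by object.

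For the converse I would begin from an $F$-$G$ pivot $\langle P, I \rangle$, which furnishes for each $A$ an isomorphism $\psi_A : F(P,A) \cong G(I,A)$, and attempt to assemble these into a single natural transformation by verifying the square displayed above. Here I expect the genuine obstacle to lie: a family of pointwise isomorphisms between two functors is \emph{not} natural in general, so the commuting square cannot simply be read off from the pivot condition as literally phrased. The honest resolution is that the symbol $\psi_A$ in Definition 1 must be understood as the $A$-component of one coherent choice of isomorphism varying with $A$ — that is, the pivot datum is itself a natural family — so that the naturality square is part of the hypothesis rather than something derivable from invertibility alone. I would therefore make this reading explicit (sharpening the definition if necessary), after which the argument closes immediately: naturality is now hypothesized, each $\psi_A$ is invertible by assumption, and hence the collection $\{\psi_A\}_{A \in \mathscr{A}}$ is by definition a natural equivalence $F(P,-) \cong G(I,-)$.
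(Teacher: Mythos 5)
Your proof is correct and, in substance, it is the paper's proof: the paper disposes of this proposition with the single line \emph{``By definition.''} But your treatment is more careful than the paper's, and the distinction you draw is the real content here. As the definition of a pivot is literally phrased, it only demands, for each object $A$, the \emph{existence} of some isomorphism $\psi_A : F(P,A) \cong G(I,A)$; nothing in that phrasing forces the chosen isomorphisms to satisfy the naturality squares $G(I,f)\circ\psi_A = \psi_B\circ F(P,f)$, and two functors that are pointwise isomorphic need not be naturally isomorphic. So only the forward implication (natural equivalence implies pivot) follows from the literal definition; the converse is true only under the reading you make explicit, namely that the pivot datum is a single coherent family $\{\psi_A\}_A$ with naturality built into the hypothesis. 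The paper's subsequent use of pivots confirms that this stronger reading is the intended one: the Lemma that assembles $\Psi := \{\psi_i\}$ into an isomorphism of complexes of functors, and the spectral-sequence proposition that invokes it, both need genuine natural transformations rather than object-by-object isomorphisms. In short, your argument is right, and the gap you identified lies in the paper's definition (and its one-line proof), not in your proof.
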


\begin{proof}
By definition.  
\end{proof}

We will use a more refined notion of a pivot from now on.  
\begin{defn}
\textbf{Flipping F-G pivots}

On an abelian category $\mathscr{A}$ both with enough projectives and with enough injectives a pair of objects $<P,I>$ are said to be \textbf{spinning F-G pivots} for two biendofunctors F,G on $\mathscr{A}$ if and only if: 

1) $<P,I>$ are F-G pivots

2) P is projective in $\mathscr{A}$

3) I is injective in $\mathscr{A}$
\end{defn}

\begin{ex}
This notion is not so obscure.  For example consider the category $_RMod_R$ of bimodules on a commutative unital assosiative ring R, then $_RMod_R$ satisfies all the assumptions of being abelian and possessing enough injective and projective objects.  

Now, let P be a finitely generated projective-injective module then $P^{\star } := Hom_R(P,R)$ is again projective.  Moreover there are for ever $R$-bimodule N, there are isomorphisms $P^{\star} \otimes_R N \cong Hom_R(P,N)$.  

If the notion of a projective-injective module is not familiar to the reader, simply consider the case where R is a field.  Then all free, projective modules and injective modules coincide.  
\end{ex}

Hence, pivots between two endofunctors are not imaginary concepts.  

From now on $\mathscr{A}$ is assumed to be abelian both with enough injectives and projectives.  Flipping F-G pivots will be useful in the following way: 

\begin{defn}
\textbf{F-G Flipping Resolution}

An \textbf{F-G flipping resolution} $\mathscr{F}:=<P_{\star},I^{\star}>$ of an object object A in $\mathscr{A}$ is a projective resolution of $... \rightarrow P_n \rightarrow ... \rightarrow P_1 \rightarrow A \rightarrow 0 $ of A, with each $<P_i,I^i>$ being flipping F-G pivots.  
\end{defn}

\begin{lem}
Maintaining the same notation as above, let $\mathscr{F}$ be a flipping resolution of some $A \in \mathscr{A}$.  Then there is a natural isomorphism $\Psi$ between the complexes of endofunctors on $\mathscr{A}$ $\Psi: F(P_i,-) \overset{\cong}{\rightarrow} G(I^i,-)$.  
\end{lem}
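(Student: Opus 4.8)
The plan is to promote the pointwise isomorphisms $\psi_A$ supplied by the pivot property into a morphism of complexes that is natural in $A$, and then observe that each component being an isomorphism forces the morphism of complexes to be an isomorphism. First I would fix the flipping resolution $\mathscr{F} = \langle P_\star, I^\star\rangle$, so that for each index $i$ the pair $\langle P_i, I^i\rangle$ is a flipping F-G pivot; by Proposition~1 this means exactly that the endofunctors $F(P_i,-)$ and $G(I^i,-)$ are naturally equivalent, i.e.\ there is a natural isomorphism $\psi^{(i)} : F(P_i,-) \overset{\cong}{\rightarrow} G(I^i,-)$ of endofunctors on $\mathscr{A}$. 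Assembling these componentwise gives a candidate $\Psi = (\psi^{(i)})_i$.

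The key steps, in order, are as follows. First I would make precise what the two objects $F(P_i,-)$ and $G(I^i,-)$ are as complexes of endofunctors: the differentials come from applying the bifunctors $F(-,-)$ and $G(-,-)$ to the boundary maps $\partial_i : P_i \to P_{i-1}$ of the projective resolution (and the dual coboundary maps on the $I^i$ side), giving $F(\partial_i, -)$ and $G(\partial_i, -)$ respectively as the arrows of the two complexes. Second, I would verify that $\Psi$ is a chain map, i.e.\ that each square
\begin{equation}
\begin{array}{ccc}
F(P_i,-) & \xrightarrow{\ \psi^{(i)}\ } & G(I^i,-)\\
\downarrow & & \downarrow\\
F(P_{i-1},-) & \xrightarrow{\ \psi^{(i-1)}\ } & G(I^{i-1},-)
\end{array}
\end{equation}
commutes. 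Third, since each $\psi^{(i)}$ is already an isomorphism of endofunctors, the chain map $\Psi$ is an isomorphism of complexes objectwise, and a chain map all of whose components are isomorphisms is automatically a chain isomorphism with inverse $(\psi^{(i)})^{-1}_i$. Finally I would record that naturality in the argument $A$ is inherited directly from the naturality of each $\psi^{(i)}$, so $\Psi$ is a natural isomorphism of complexes of endofunctors as claimed.

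The main obstacle I expect is the second step: the commutativity of the squares is the only place where anything beyond the definition is used, and it is not automatic. The isomorphisms $\psi^{(i)}$ are produced independently for each index by the pivot hypothesis, so there is no reason \emph{a priori} that they intertwine the differentials coming from $F(\partial_i,-)$ and $G(\partial_i,-)$. To push this through one really needs the natural transformations $\psi^{(i)}$ to be compatible with the maps between pivots, which in turn would require the flipping resolution to carry a chosen system of maps $I^i \to I^{i-1}$ dual to the $\partial_i : P_i \to P_{i-1}$ together with a coherence condition tying $\psi^{(i)}$ to $\psi^{(i-1)}$. I would therefore isolate precisely which compatibility the definition of a flipping resolution guarantees, and if the stated definition only provides the pivots index-by-index, I would flag that the squares commute exactly when the family $\{\psi^{(i)}\}$ is chosen coherently — which is where the substance of the lemma sits and where I would spend the bulk of the argument, the remaining steps being formal.
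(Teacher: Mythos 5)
Your construction is the same as the paper's: the paper's entire proof consists of observing that each flipping pivot $\langle P_i, I^i\rangle$ is, by definition, a natural isomorphism $\psi_i \colon F(P_i,-)\cong G(I^i,-)$, and then declaring $\Psi:=\{\psi_i\}_i$ to be the desired isomorphism of complexes. It does nothing more; in particular, the chain-map verification that you correctly isolate as the substance of the claim is entirely absent from the paper's argument.

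Your suspicion about the definition is also right, and this is the important point. A flipping resolution, as defined, consists only of a projective resolution $P_\star$ together with, for each $i$, an object $I^i$ pivoting against $P_i$; no maps $I^i\to I^{i-1}$ and no coherence between $\psi_i$ and $\psi_{i-1}$ are part of the data. Consequently $\{G(I^i,-)\}_i$ is not a priori a complex at all, and the lemma can only be made true in one of two ways: either one decrees the differentials on the $G$ side to be the transports $\psi_{i-1}\circ F(\partial_i,-)\circ \psi_i^{-1}$, in which case the chain-map condition holds tautologically but the resulting differentials need not be of the form $G(\delta_i,-)$ for any morphisms $\delta_i$ between the $I^i$; or one adds precisely the coherence data you describe, which the paper never supplies. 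The gap is not harmless downstream: in the spectral-sequence proposition that follows, the paper uses injectivity of the $I^i$ to kill the higher derived functors of $G(I^\star,Q_q)$, an argument that implicitly treats $G(I^\star,-)$ as the complex induced by an injective resolution --- exactly the structure that is missing. So your proposal is the honest version of the paper's proof: where you say you would spend the bulk of the argument is exactly where the paper is silent.
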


\begin{proof}
A flipping F-G pivot $<P_i,I_i>$ is the same as a natural isomorphism $\psi_i: F(P_i,-) \cong G(I_i,-)$.  Now define the isomorphism $\Psi$ in question as the familly of natural isomorphisms $\Psi := \{\psi_i \} : \{F(P_i,-) \rightarrow G(I^i,-)\}_{i}$.  
\end{proof}

Now the abstract form of the first result is introduced, the assumptions made thus far are reiterated in the statement for completeness: 
\begin{prop}
Let $\mathscr{A}$ be an abelian category with enough projectives and injectives.  Moreover, suppose $F,G$ are biendofunctors on $\mathscr{A}$.  Now suppose, A is an object in A which admits a flipping resolution $\mathscr{F}$ of finite length.   Moreover if $G(?,-)$ is right exact in the both inputs, then: 

For all objects $B,C$ in $\mathscr{A}$ there is a convergent spectral sequence: 

$R^pG(L_qF(P_0,B),C) \Rightarrow L_{q-p}F(A,C)$.  

\end{prop}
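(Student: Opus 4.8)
The plan is to exhibit the asserted spectral sequence as one of the two filtration spectral sequences of a single bounded bicomplex assembled from the flipping resolution $\mathscr{F}$, letting the pivot isomorphisms of the Lemma interchange the $F$- and $G$-data; because $\mathscr{F}$ has finite length the bicomplex will be concentrated in finitely many columns, which is exactly what forces convergence. Concretely, I would first fix the finite projective resolution $\cdots \to P_1 \to P_0 \to A \to 0$ underlying $\mathscr{F}$ together with the natural isomorphisms $\psi_i \colon F(P_i,-) \overset{\cong}{\rightarrow} G(I^i,-)$ furnished by the Lemma, and then resolve the auxiliary variables by a projective resolution $Q_\star \twoheadrightarrow B$ and an injective resolution $C \hookrightarrow J^\star$. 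The candidate bicomplex is $\mathscr{T}^{p}_{q} := G\!\left(I^{q},J^{p}\right)$, with horizontal differential induced through $\Psi = \{\psi_i\}$ by the boundary maps of $P_\star$ transported to the $G(I^\star,-)$ side, and vertical differential induced by the injective resolution $J^\star$ of $C$. Its total complex carries a homological contribution (the $P_\star$/$F$ direction, lowering degree) and a cohomological one (the $J^\star$/$R^p G$ direction, raising degree), so the natural grading on the abutment is the difference $q-p$; this is the structural reason the target is $L_{q-p}F(A,C)$ rather than a sum in $p+q$.

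Running the two standard spectral sequences of $\operatorname{Tot}(\mathscr{T})$ then produces both sides. Taking homology in the $P_\star$-direction first, $\Psi$ identifies $F(P_\star,-)$ with $G(I^\star,-)$ and collapses that direction onto the complex computing $L_\bullet F(A,-)$; feeding in the injective resolution of $C$ yields the abutment $L_{q-p}F(A,C)$. Taking homology in the other direction first, one recognizes $L_q F(P_0,B)$ as the homology of $F(P_0,Q_\star) \cong G(I^0,Q_\star)$ in the $B$-variable, after which the surviving $J^\star$-differential computes $R^p G(-,C)$ on these homology objects, giving the $E_2$-page $R^p G\!\left(L_q F(P_0,B),C\right)$. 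The hypothesis that $G$ is right exact in both inputs is what guarantees that these row- and column-homologies genuinely compute the derived functors $R^\bullet G$ and $L_\bullet F$, with no correction terms intervening.

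The hard part will be the bookkeeping of the mixed variance, and I expect it to split into three delicate points. First, one must check that $\mathscr{T}$ is a genuine bicomplex, i.e. that the differentials transported across $\Psi$ square to zero and anticommute with the vertical differential; here the naturality clause of the Lemma is essential, since each $\psi_i$ must intertwine the map $F(d_i,-)$ induced by the boundary of $P_\star$ with the corresponding boundary operator on the $G(I^\star,-)$ side. Second, one must reconcile the homological and cohomological gradings into the single index $q-p$ without sign or convergence pathologies; this is precisely where the finite length of $\mathscr{F}$ is used, to bound one filtration and thereby force both spectral sequences to converge to the same abutment. Third, one must justify identifying the collapsed direction with $L_{q-p}F(A,C)$, i.e. that replacing the resolved pair $(P_0,B)$ by $A$ respects the pivot structure. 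I expect the first two to be the genuine obstacles, with right-exactness of $G$ and the finite length of $\mathscr{F}$ being exactly the two hypotheses that dispatch them.
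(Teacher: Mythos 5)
Your overall strategy --- a bounded bicomplex built from the flipping resolution, two filtrations, with the pivot isomorphisms $\Psi$ trading the $F$-direction for the $G$-direction --- is the same as the paper's, but your bicomplex is not the paper's and, as constructed, cannot yield the stated spectral sequence. The fatal point is that $\mathscr{T}^{p}_{q} := G(I^{q},J^{p})$ involves neither $B$ nor the resolution $Q_\star \twoheadrightarrow B$: no filtration of a bicomplex that never mentions $B$ can have an $E_2$-page of the form $R^pG(L_qF(P_0,B),C)$. When you later ``recognize $L_qF(P_0,B)$ as the homology of $F(P_0,Q_\star)$,'' you are computing the homology of a complex that is not a row or a column of $\mathscr{T}$, so the two halves of your argument are about different objects. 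The paper avoids this by plugging $B$ in \emph{unresolved}: its bicomplex (in the form used for the second filtration) has entries $G(F(P_p,B),Q_q)$, where the only resolutions in play are $P_\star$ of $A$ (with its pivot partners $I^\star$) and $Q_\star$ of $C$; the variable $B$ sits inertly inside $F$.

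Second, there is a variance clash in your treatment of $C$. You resolve $C$ injectively by $J^\star$ so that the vertical direction computes $R^pG(-,C)$, but you also want that same direction, after the $\Psi$-collapse, to feed into $L_{q-p}F(A,C)$; left derived functors of the covariant functor $F(A,-)$ at $C$ are computed from a \emph{projective} resolution of $C$, so a single resolution of $C$ cannot do both jobs. The paper's resolution of this tension is the entire point of the flipping pivots: $C$ is resolved projectively by $Q_\star$ (which is what the $L$-side needs), and the injective objects responsible for the right-derived behaviour of $G$ are not a resolution of $C$ at all --- they are the $I^p$ supplied by the flipping resolution itself, the complex $G(I^\star,Q_q)$ collapsing because each $I^p$ is injective. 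Relatedly, the ingredient you never invoke, and which the paper uses to commute $G$ past homology in its second filtration, is the tensor-like fact that $G$ being right exact in both inputs together with $Q_q$ projective makes $G(-,Q_q)$ exact; your assertion that right exactness ensures the row and column homologies compute derived functors ``with no correction terms'' is precisely the step this fact is needed for, and it does not follow from right exactness alone.
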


\begin{proof}
Let $Q_{\star}$ be a projective resolution of C in $\mathscr{A}$.  Now the commutative diagram who's nodes are $C_{\star\star}:=F(P_{\star},Q_{\star})$ and arrows are the differentials corresponding to each resolution $Q_{\star}$ and $P_{\star}$, yields a bicomplex \textit{(again call it)} $C_{\star\star}$ in the usual way.

This is made into a $3^{rd}$ quadrant bicomplex by relabelling the first indices as $p := -p+l$, where $l$ is the length of the \textit{(finite by assumption)} of the flipping resolution $\mathscr{F}$.  

First \textbf{$^{II}E^q_{p,q}$ is calculated}

By the above lemma there is an isomorphisms of complexes of functors $\Psi: F(P_p,-) \rightarrow G(I^p,-)$.  This then gives an isomorphism of complexes $F(P_p, Q_q) \cong G(I^p,Q_p)$.  

Fix p, and compute the homology of $G(I^p,Q_q)$.  Since the $I^q$ are injective, then the covariant right derived functors of each $R^pG(I^{\star},Q_q)$ trivialise for all $p \neq 0$.  Therefore, on the first page: 

$^IE^1_{pq} \cong $
$
\begin{cases} 
G(I^0,Q_q) & \mbox{ if } p=0 \\
0 & \mbox{ if } p \neq 0 
\end{cases}
$.  \\
Which by the natural isomorphism $\psi_0^{-1}$ is isomorphic to: 

$
^IE^1_{pq} \cong
\begin{cases} 
F(P_0,Q_q) & \mbox{ if } p=0 \\
0 & \mbox{ if } p \neq 0 
\end{cases}
$.  

Taking the homology again but with respect to the index q, the spectral sequecne now stabilises on the second page as the $Q_q$ are projective hence acyclic for covariant left derived functors: 

$
^{I}E^2_{pq} \cong
\begin{cases} 
F(P_0,Q_0) & \mbox{ if } p=0 \\
0 & \mbox{ if } p \neq 0 
\end{cases}
$.  

Now \textbf{$^{II}E^q_{p,q}$ is calculated}.  

Calculating first with respect to the index p, fixing q.  Now G is right exact, therefore since the $Q_q$ are projective 
$G(-,Q_q)$ is exact and so it commutes with the homology functors.  

Therefore, $H_{-p}(G(F(P_{\star},B),Q_q))) $

$\cong G(H_{-p}(F(P_{\star},B))),Q_q)$.  Now similarly, the first page of the second filtration of the spectral sequence gives: 

$
^{II}E^1_{pq} \cong
\begin{cases} 
G(H_{-p}(F(P_{\star},B))),Q_q) & \mbox{ if } q=0 \\
0 & \mbox{ if } q \neq 0 
\end{cases}
$.  

Finishing by taking the homology with respect to the indices q, the spectral sequence now stabilises page as:

$
^{II}E^{2}_{pq} \cong
\begin{cases} 
G(H_{-p}(F(P_{\star},B))),Q_0) & \mbox{ if } q=0 \\
0 & \mbox{ if } q \neq 0 
\end{cases}
$, since $G(?,-)$ was right exact in the first input and 
$Q_{\star}$ projectively resolves C then $H_0G(?,Q_{\star}) \cong G(?,C)$.  Hence:  

$
^{II}E^{2}_{pq} \cong
\begin{cases} 
G(H_{-p}(F(P_{\star},B))),C) & \mbox{ if } q=0 \\
0 & \mbox{ if } q \neq 0 
\end{cases}
$.  
\end{proof}

Though it was unnecessary for the proof, for actual applications assuming furthermore, that F is left exact in both imputs yields a more applicable version of the result: 

\begin{cor}
\textbf{The Flipping Spectral Sequence}
Let $\mathscr{A}$ be an abelian category with enough projectives and injectives.  Moreover, suppose $F,G$ are biendofunctors on $\mathscr{A}$.  Now suppose, A is an object in A which admits a flipping resolution $\mathscr{F}$ of finite length.   Moreover if $G(?,-)$ is right exact in the both inputs and $F(-,?)$ is left exact in both inputs then: 

For all objects $B,C$ in $\mathscr{A}$ there is a convergent spectral sequence: 

$R^pG(L_qF(P_0,B),C) \Rightarrow L_{q-p}F(A,C)$.  
\end{cor}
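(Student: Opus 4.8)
The plan is to reduce the Corollary directly to the Proposition. A glance at the two statements shows that they share the same standing hypotheses ($\mathscr{A}$ abelian with enough projectives and injectives, $F,G$ biendofunctors, $A$ admitting a finite flipping resolution, and $G(?,-)$ right exact in both inputs) and, crucially, the \emph{identical} conclusion, namely the convergent spectral sequence $R^pG(L_qF(P_0,B),C) \Rightarrow L_{q-p}F(A,C)$. The Corollary merely imposes the one extra assumption that $F(-,?)$ be left exact in both inputs. Since the sought conclusion is verbatim that of the Proposition, the first and essentially only step is to invoke the Proposition; the left-exactness of $F$ is carried along as surplus data.

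To make this transparent I would briefly recall why the added hypothesis never enters the convergence argument. The Proposition builds the bicomplex $C_{\star\star} := F(P_\star, Q_\star)$ from the finite flipping resolution $P_\star$ of $A$ and a projective resolution $Q_\star$ of $C$, and computes its total homology by the two standard filtrations. One filtration collapses because the Lemma supplies the natural isomorphism $\Psi : F(P_p,-) \cong G(I^p,-)$ together with the injectivity of the $I^p$, so that $R^pG(I^\star, Q_q)$ vanishes off $p=0$; the other collapses because $G(-,Q_q)$ is exact on projectives and $Q_\star$ resolves $C$. At no point is any exactness of $F$ used. Hence the same chain of identifications goes through unchanged under the Corollary's stronger hypotheses, and convergence is inherited from the Proposition.

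The genuine purpose of demanding that $F$ be left exact in both inputs is not logical but practical, and this is the point I would emphasise. Under this hypothesis the symbols $L_qF(P_0,B)$ and $L_{q-p}F(A,C)$ are the bona fide left derived functors of a functor with controlled behaviour in degree zero, so the $E^2$-page and the abutment are computable by the expected (hyper)homology and the spectral sequence becomes usable in the concrete settings of the later chapters. I do not anticipate any real obstacle here: there is nothing to re-prove, only to observe that the extra hypothesis is consistent with the Proposition and that it pins down the derived functors appearing in the statement. The sole point meriting a word of care is to confirm that $F(P_0,-)$ retains its left-exactness after fixing the pivot $P_0$, so that $L_\bullet F(P_0,-)$ is unambiguously defined; this is immediate from left-exactness in the second input. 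The Corollary then follows.
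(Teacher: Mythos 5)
Your reduction is exactly the paper's own proof: the paper proves the Corollary by the words ``as above'' (i.e.\ invoking the Proposition, whose hypotheses and conclusion the Corollary repeats verbatim) and then records the single consequence of the added hypothesis, namely that left exactness of $F(-,B)$ together with $P_{\star}$ projectively resolving $A$ yields $H_0F(P_{\star},B) \cong F(A,B)$. You perform the same reduction and assign the extra hypothesis the same purely practical role (identifying the degree-zero derived functor with $F$ itself, so the terms of the spectral sequence are the genuine functor values), so the proposal matches the paper's approach.
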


\begin{proof}
As above, moreover since $F(-,B)$ is left exact and $P_{\star}$ projectivly resolves A then $H_0F(P_{\star},B) \cong F(A,B)$ in $\mathscr{A}$.  
\end{proof}

\section{A Duality Theorem}
The goal here is to now in some way relate the derived functors of two endobifunctors G and F on $\mathscr{A}$.  However, a certain weaker version of the notion of acyclicity would be appropriate.  

\begin{defn}
\textbf{F-acyclic of order n}

Let $F: \mathscr{A} \rightarrow \mathscr{B}$ be a contravariant left exact functor between abelian categories, $\mathscr{A}$ having enough injectives.  

An object A in an abelian category $\mathscr{A}$ with enough projectives is said to be \textbf{F-acyclic of order n}, if the left derived functors $L_qF(P_{\star})$ all vanish except \textit{(possibly)} for the case when $q=n$.  
\end{defn}

Heuristically, this is essentially asking that an object behave like a affine n-space does with respect to singular \textit{co}homology.  

Now the second central definition and abstract notion of this text: 

\textit{For the rest of this section assume the setting of the previous corollary}.  

\begin{defn}
\textbf{F-G derived duality of order n}

Let F, G be two biendofunctors on $\mathscr{A}$ \textit{(as specified above)} and A be an object therein.  Then, F and G are said to satisfy a \textbf{derived duality of order n with respect to A} \textit{if and only if} M admits a flipping resolution of finite length n and $F(A,-)$ there some B in $\mathscr{A}$ which is order n $F(A,-)$-acyclic.  
\end{defn}

Now immediately it is concluded from this definition that: 

\begin{cor}
\textbf{The derived duality Theorem}

Let $\mathscr{A}$ be an abelian category with enough projectives and injectives.  Moreover, suppose $F,G$ are biendofunctors on $\mathscr{A}$.  Now suppose, A is an object in A which admits a flipping resolution $\mathscr{F}$ of finite length.   Moreover, if $G(?,-)$ is right exact in the both inputs and $F(-,?)$ is left exact in the both inputs and F and G satisfy a derived duality of order n with respect to A.  Then for every C in $\mathscr{A}$, there is some B in $\mathscr{A}$ and there are natural isomorphisms $\phi_N$ of the derived functors: 

$\phi_{N,\star} : R^{\star}G(L_nF(A,B),C) \cong L_{\star}F(A,C)$.  
\end{cor}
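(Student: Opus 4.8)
The plan is to obtain the asserted isomorphism as the degeneration of the Flipping Spectral Sequence of the preceding corollary. All of its hypotheses are in force here: $\mathscr{A}$ has enough projectives and injectives, $A$ admits a finite flipping resolution, $G(?,-)$ is right exact in both inputs, and $F(-,?)$ is left exact in both inputs. Hence, fixing the witnessing object $B$ supplied by the derived-duality assumption and an arbitrary $C$ in $\mathscr{A}$, I would begin by writing down the convergent spectral sequence
\[
E_2^{p,q}=R^{p}G\bigl(L_{q}F(A,B),C\bigr)\;\Rightarrow\;L_{q-p}F(A,C).
\]

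The first genuine step is to invoke the acyclicity half of the derived-duality hypothesis: by assumption $B$ is $F(A,-)$-acyclic of order $n$, so $L_{q}F(A,B)=0$ for every $q\neq n$. Since $R^{p}G(-,C)$ sends the zero object to the zero object in each cohomological degree, the whole $E_2$ page collapses onto the single row $q=n$, giving
\[
E_2^{p,q}\cong
\begin{cases}
R^{p}G\bigl(L_{n}F(A,B),C\bigr) & q=n,\\
0 & q\neq n.
\end{cases}
\]

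Next I would run the standard single-row degeneration argument. For each $r\ge 2$ the differential $d_r$ carries a nonzero $q$-component in its bidegree, so on every page its source or target sits in a vanishing row; therefore all higher differentials are zero and $E_2^{p,n}=E_\infty^{p,n}$. In each total degree $q-p=n-p$ there is then exactly one nonzero term on the $E_\infty$ page, so the finite filtration on the abutment has a single nonzero associated graded piece and no extension problem survives; the edge map is consequently an isomorphism $R^{p}G(L_{n}F(A,B),C)\cong L_{n-p}F(A,C)$, which reindexed produces the family $\phi_{N,\star}$. Naturality in $C$ is then inherited from the construction: a morphism $C\to C'$ induces a map of chosen projective resolutions, unique up to chain homotopy, hence a filtered morphism of the associated bicomplexes and a morphism of spectral sequences compatible with the collapse, so the edge isomorphisms assemble into a natural transformation.

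I expect the main obstacle to be the index bookkeeping forced by the third-quadrant convention used to assemble the bicomplex, in particular confirming after the relabelling $p\mapsto -p+l$ that the surviving diagonal is exactly the one recorded in the statement and that the resulting degree shift is the one intended by $L_{\star}F(A,C)$. Once the single nonvanishing line is correctly located, the degeneration and the absence of extension problems are immediate, and the only remaining care is to check that the acyclicity defined via $L_{\star}F(A,-)$ really feeds the $q$-indexed homology appearing in the spectral sequence rather than the $p$-indexed cohomology of $G$.
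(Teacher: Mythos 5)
Your proposal is correct and follows essentially the same route the paper intends: the paper offers no written proof, presenting the corollary as an immediate consequence of the Flipping Spectral Sequence combined with the order-$n$ acyclicity of $B$ from the derived-duality hypothesis, which is exactly the single-row collapse you carry out. Your write-up simply supplies the degeneration, edge-map, and naturality details the paper leaves implicit, and you are right to flag the index bookkeeping, since the isomorphism really lands in $L_{n-\star}F(A,C)$ and the paper's statement glosses over this reindexing.
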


The hypotheses of the theorem do seem plentiful, however a very simple final example will be worked out at the end of this paper, showing the non-vacuousness of the assumptions in a rather familiar setting.

\begin{ex}
\textbf{Derived Duality over fields}

Consider the case where k is a field, C is the k-coalgebra $C:=Hom_k(k,k) \cong k$ \textit{(as k-modules)}.  

Consider a free k-bicomodule $X$ \textit{(which is the same as a free k-bicomodue)}.  Since $k^e \cong k$,  X has free resolution of length 1: $0 \rightarrow k \rightarrow k \rightarrow 0$, hence it has a finite flipping resolution\textit{(since freeness is equivalent to injectivenss which is equivalent to projectiveness over a field k)}. 
\end{ex}

\subsection{Pseudo-derived Functors}
\subsubsection{An small Foreword and some comments}
For the duration of this section assume $\mathscr{A},\mathscr{B},\mathscr{C}$ to be abelian categories, and $\tilde{\mathscr{A}}$ a full subcategory of $\mathscr{A}$ on which there is exists a right exact \textit{(in both inputs)} bifunctor $F:\tilde{\mathscr{A}} \times \mathscr{B} \rightarrow \mathscr{C}$, which is contravariant in the first input and covariant in the second.  

The results and definitions of this section may be applied similar situations where F is left exact or covariant, \textit{mutatis mutandis}.  However, for direct applicability, these assumptions are chosen and made on F.  

\subsubsection{Extending Functors}
The idea of this subsection subsection is a simple but very important one, it is to \textit{"extend"} the aforementioned bifunctor F and more generally its right derived functors $R^{\star}F$, from $\tilde{\mathscr{A}} \times \mathscr{B} \rightarrow \mathscr{C}$ to $\mathscr{A} \times \mathscr{B} \rightarrow \mathscr{C}$.  

This is nothing more than a little trick.  For any object $A \in \tilde{\mathscr{A}}$, notice, that in the definition of a left derived functor, if such a thing were to exist, requires nothing but a resolution of A by injectives $I_A^{\star}$.  Therefore, if there is a resolution $I_A^{\star}$ of $A$, such that the functors $F(I_A^{i},-)$ exist for every $i$ then the right derived functors of the complex of functors $R^{\star}F(I_A^{\star},-)$ exist.  Moreover, since $F(-,-)$ is right exact in the first imput then if $A \in \tilde{\mathscr{A}}$, that is $F(A,-)$ is defined, then for every $B\in \mathscr{B}$, the object $R^{0}F(I_A^{\star},B)$ is isomorphic to the objects $F(A,B)$.  

Moreover, for any $A' \in \mathscr{A}$ admitting an injective resolution of objects entirely in $\tilde{\mathscr{A}}$ and a morphism $\phi: A \rightarrow A'$ in $\mathscr{A}$.  The ususal theory of derived functors implies there exist a unique family of induced morphisms \textit{(of functors)} $\phi^{\star}:R^{\star}F(I_A^{\star},-) \rightarrow R^{\star}F(I_B^{\star},-)$; therefore there is the following definition$/$ proposition:

\begin{prop}
\textbf{Right Pseudo-derived Functors Existence}

Let $\mathscr{A},\mathscr{B},\mathscr{C}$ to be abelian categories, and $\tilde{\mathscr{A}}$ a full subcategory of $\mathscr{A}$ on which there is exists a right exact \textit{(in both inputs)} bifunctor $F:\tilde{\mathscr{A}} \times \mathscr{B} \rightarrow \mathscr{C}$, which is contravariant in the first input and covariant in the second.  

Moreover, assume A is an object in $\mathscr{A}$ which admits an injective resolution $I_A^{\star}$, such that every $I_A^{i}$ is an object in $\tilde{\mathscr{A}}$.  

Then there exist homotopy invariant bifunctors $PR^{\star}F(I_-^{\star},-): \mathscr{A}_{\tilde{\mathscr{A}}} \times\mathscr{B} \rightarrow \mathscr{C}$, where $\mathscr{A}_{\tilde{\mathscr{A}}}$ is the fullsubcategory of $\mathscr{A}$ of objects admitting injective resolutions $I_A^{\star}$, where each $I_A^i$ is an object of $\tilde{\mathscr{A}}$.  

Moreover, when A is an object of the subcategory $\tilde{\mathbb{A}}$ then there are bifunctorial isomorphisms $PR^{\star}F(-,-)\cong R^{\star}F(-,-)$.  
\end{prop}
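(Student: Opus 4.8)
The plan is to construct $PR^{\star}F$ by the only recipe the hypotheses allow: resolve the first argument injectively \emph{inside} $\tilde{\mathscr{A}}$, apply $F$ in its (defined) first slot, and take cohomology. Concretely, for $A \in \mathscr{A}_{\tilde{\mathscr{A}}}$ I would fix an injective resolution $0 \rightarrow A \rightarrow I_A^0 \rightarrow I_A^1 \rightarrow \cdots$ all of whose terms lie in $\tilde{\mathscr{A}}$. Since $F$ is contravariant in the first input, for each $B \in \mathscr{B}$ the family $F(I_A^{\star},B)$ is a cochain complex in $\mathscr{C}$ (the differential $I_A^i \rightarrow I_A^{i+1}$ becomes $F(I_A^{i+1},B) \rightarrow F(I_A^i,B)$), and I would \emph{define} $PR^nF(I_A^{\star},B)$ to be its $n$th cohomology object, which exists because $\mathscr{C}$ is abelian. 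For each fixed $i$ the assignment $B \mapsto F(I_A^i,B)$ is a functor and the differentials are natural in $B$, so $PR^nF(I_A^{\star},-)$ is automatically a functor $\mathscr{B} \rightarrow \mathscr{C}$. The identification $PR^0F(I_A^{\star},B) \cong F(A,B)$ for $A \in \tilde{\mathscr{A}}$ is then forced by right exactness in the first slot, exactly as sketched before the statement.

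The heart of the argument is independence of the chosen resolution, that is, homotopy invariance. Here I would invoke the comparison theorem for injective resolutions: given two such resolutions $I_A^{\star}$ and $J_A^{\star}$ of the same $A$, the identity of $A$ lifts to chain maps $f:I_A^{\star} \rightarrow J_A^{\star}$ and $g:J_A^{\star} \rightarrow I_A^{\star}$ with $gf$ and $fg$ chain homotopic to the respective identities. The point at which the hypotheses genuinely bite is that every object \emph{and} every component map occurring here already lies in $\tilde{\mathscr{A}}$, which because $\tilde{\mathscr{A}}$ is a \emph{full} subcategory means these are bona fide morphisms of $\tilde{\mathscr{A}}$ to which $F(-,B)$ may be applied; and since $F(-,B)$ is additive it carries chain homotopies to chain homotopies. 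Thus $F(f,B)$ and $F(g,B)$ descend to mutually inverse isomorphisms on cohomology, yielding a canonical comparison isomorphism between the two candidate values. Applying the same theorem to an arbitrary morphism $\phi:A \rightarrow A'$ in $\mathscr{A}_{\tilde{\mathscr{A}}}$ rather than to an identity supplies a lift, unique up to homotopy, hence a well-defined morphism $PR^nF(\phi,B)$; respect for composites and identities follows because lifts compose up to homotopy, and naturality in $B$ is immediate. Assembling these proves the first assertion.

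For the final assertion I would observe that on $\tilde{\mathscr{A}}$ the genuine derived functor $R^{\star}F(-,-)$ is, by its very construction, the cohomology of $F$ applied to an injective resolution of its first argument, which is the identical recipe defining $PR^{\star}F$. Choosing, for $A \in \tilde{\mathscr{A}} \cap \mathscr{A}_{\tilde{\mathscr{A}}}$, one resolution to compute both, the two graded objects coincide, and the homotopy invariance established above guarantees this identification is independent of the resolution and natural in both variables, giving the bifunctorial isomorphism $PR^{\star}F \cong R^{\star}F$; in degree $0$ it recovers $F$ via right exactness, pinning the isomorphism down in the expected way.

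I expect the main obstacle to be precisely this reconciliation: a priori $R^{\star}F$ on $\tilde{\mathscr{A}}$ is computed from resolutions \emph{internal} to $\tilde{\mathscr{A}}$, whereas $PR^{\star}F$ uses resolutions that are injective in $\mathscr{A}$ but merely have their terms in $\tilde{\mathscr{A}}$. The comparison isomorphism bridges the two only if one checks that both kinds of resolution can be mediated by chain maps living entirely within $\tilde{\mathscr{A}}$ — and it is exactly the fullness of $\tilde{\mathscr{A}}$, together with the availability of enough suitably-placed injectives, that makes this compatibility go through. Establishing that step, rather than the routine additivity and functoriality bookkeeping, is where the real care is needed.
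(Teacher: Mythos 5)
Your proposal is correct and follows essentially the same route as the paper, whose proof simply points back to the discussion preceding the proposition: define $PR^{\star}F$ as the cohomology of $F(I_A^{\star},B)$ for an injective resolution with terms in $\tilde{\mathscr{A}}$, get degree-zero agreement from right exactness, and get functoriality and resolution-independence from the standard comparison theorem and homotopy invariance of derived functors. Your write-up is in fact more careful than the paper's, in particular in noting that \emph{fullness} of $\tilde{\mathscr{A}}$ is what guarantees the comparison maps and homotopies are morphisms to which $F(-,B)$ can legitimately be applied.
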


\begin{proof}
Follows from the discussion preceding the proposition, and the usual properties of right derived functors, for example homotopy invariance.  
\end{proof}

\begin{defn}
\textbf{Right Pseudo-derived Functors}

To each object A and functor F as above, the previously described family of bifunctors $R^{\star}F(I_A^{\star},-)$ evaluated on A in the first imput, are called the \textbf{Right Pseudo-derived Functors} of F \textit{on A}, and are denoted $PR^{\star}F(A,-)$.\\
Moreover, when these exist for all objects A in $\mathscr{A}$ simply call them \textbf{Right Pseudo-derived Functors} of F.  
\end{defn}

Directly by construction Pseudo-derived Functors have the following properties:

\begin{prop}
The construction of a pseudo right-derived functor 

$PR^{\star}F(A,-)$ of an object A is independent of resolution chosen in $\tilde{\mathscr{A}}$.  

\end{prop}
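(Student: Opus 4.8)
The plan is to reduce the claim to the classical comparison theorem for injective resolutions, combined with the homotopy invariance already recorded in the preceding \emph{existence} proposition. Fix an object $A \in \mathscr{A}$ and suppose $I_A^{\star}$ and $J_A^{\star}$ are two injective resolutions of $A$, each of whose terms lies in the full subcategory $\tilde{\mathscr{A}}$. I must produce a natural isomorphism $R^{\star}F(I_A^{\star},-) \cong R^{\star}F(J_A^{\star},-)$ between the two resulting families of functors $\mathscr{B} \to \mathscr{C}$ obtained by applying $F$ in the first slot and taking cohomology.

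First I would invoke the dual of the fundamental lemma of homological algebra. Since both complexes are injective resolutions of the \emph{same} object $A$, the identity $\mathrm{id}_A$ lifts to chain maps $f : I_A^{\star} \to J_A^{\star}$ and $g : J_A^{\star} \to I_A^{\star}$, and any two lifts of $\mathrm{id}_A$ are chain homotopic; in particular $g f$ and $f g$ are homotopic to the respective identities, so $I_A^{\star}$ and $J_A^{\star}$ are chain homotopy equivalent in $\mathscr{A}$. Here the hypotheses earn their keep: because $\tilde{\mathscr{A}}$ is a \emph{full} subcategory and every term of both resolutions already lies in $\tilde{\mathscr{A}}$, all components of $f$, $g$ and of the homotopy operators are themselves morphisms of $\tilde{\mathscr{A}}$, so $F$ may legitimately be applied to each of them.

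Next I would apply $F(-,B)$ for a fixed $B \in \mathscr{B}$. Since $F$ is additive (as is implicit in its right exactness) and contravariant in its first argument, it transports the chain homotopy equivalence above to a chain homotopy equivalence of complexes $F(J_A^{\star},B) \simeq F(I_A^{\star},B)$ in $\mathscr{C}$, with the chain maps reversed and with the homotopy operators carried to genuine homotopies $F(s,B)$. Homotopy equivalent complexes have isomorphic cohomology, which yields $R^{\star}F(I_A^{\star},B) \cong R^{\star}F(J_A^{\star},B)$ in every degree — this is exactly the homotopy invariance quoted in the existence proposition. The isomorphism is moreover canonical, since any two lifts of $\mathrm{id}_A$ are homotopic and hence induce the same map on cohomology, so no choice of $f$ or $g$ survives to the level of cohomology.

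Finally I would verify naturality in $B$. For a morphism $\beta : B \to B'$ the bifunctoriality of $F$ forces the squares relating $F(f,B)$ and $F(f,B')$ to commute, whence the induced isomorphisms commute with the maps induced by $\beta$; the analogous bookkeeping yields compatibility with the connecting morphisms, so the isomorphism is natural as a transformation of functors $\mathscr{B} \to \mathscr{C}$. The one point demanding genuine care — and the step I expect to be the main obstacle — is precisely the requirement that the comparison maps remain inside $\tilde{\mathscr{A}}$ so that $F$ is defined everywhere along them; this is where fullness of $\tilde{\mathscr{A}}$ and the assumption that \emph{both} resolutions have all terms in $\tilde{\mathscr{A}}$ are indispensable, and it is exactly what separates the pseudo-derived setting from the ordinary one.
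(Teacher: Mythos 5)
Your proof is correct and follows the same route as the paper's: the paper's proof is a one-line appeal to the fact that pseudo-derived functors are defined as ordinary derived functors on a resolution lying in $\tilde{\mathscr{A}}$, hence inherit the classical independence of resolution. Your version spells out that classical comparison-theorem argument and, usefully, verifies the one point the paper leaves implicit --- that fullness of $\tilde{\mathscr{A}}$, together with the assumption that all terms of both resolutions lie in $\tilde{\mathscr{A}}$, guarantees the comparison chain maps and homotopies are morphisms to which $F$ may legitimately be applied.
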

\begin{proof}
Since pseudo-derived functors are defined as derived functors, then they are independed of choice of resolution in $\tilde{\mathscr{A}}$.  
\end{proof}

It should crucially be noted that, an arbitrary resolution of elements in $\mathscr{A}$, does not generally make sence, therefore the above propoisition specifically applies to resolution which give actual, results; and therefore must be in $\tilde{\mathscr{A}}$.  

\begin{prop}
If $0 \rightarrow A \rightarrow B \rightarrow C \rightarrow 0$ is a short exact sequence of objects in $\mathscr{A}$ then there is a long exact sequence of functors: 

$ .. \rightarrow PR^{\star}F(A,-) \rightarrow $ 

$ PR^{\star}F(B,-) \rightarrow PR^{\star}F(C,-) \rightarrow PR^{\star+ + 1}F(A,-) \rightarrow ...$.  
\end{prop}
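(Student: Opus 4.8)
The plan is to reduce the statement to the ordinary long exact cohomology sequence of a short exact sequence of cochain complexes, exactly as for genuine right derived functors, and then to verify that the extra condition defining the pseudo-derived functors — that every resolving object lie in $\tilde{\mathscr{A}}$ — survives the construction. Throughout I would assume $A,B,C \in \mathscr{A}_{\tilde{\mathscr{A}}}$, so that each admits an injective resolution with all terms in $\tilde{\mathscr{A}}$ and the three pseudo-derived functors appearing in the statement are in fact defined.

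First I would invoke the dual horseshoe lemma. Choosing injective resolutions $I_A^{\star}$ and $I_C^{\star}$ with every term in $\tilde{\mathscr{A}}$, the horseshoe lemma produces an injective resolution $I_B^{\star}$ of $B$ together with a short exact sequence of complexes $0 \to I_A^{\star} \to I_B^{\star} \to I_C^{\star} \to 0$ that is split in each degree, with $I_B^i \cong I_A^i \oplus I_C^i$. Next I would apply $F(-,N)$ for a fixed $N \in \mathscr{B}$: because the sequence of complexes is degreewise split, applying the additive (contravariant) functor $F(-,N)$ preserves degreewise exactness and yields a short exact sequence of complexes $0 \to F(I_C^{\star},N) \to F(I_B^{\star},N) \to F(I_A^{\star},N) \to 0$ (the contravariance in the first variable interchanging the two outer terms). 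Its associated long exact cohomology sequence, with connecting map $\partial$ furnished by the snake lemma, is precisely the claimed sequence of the $PR^{\star}F(-,N)$; and since every object and arrow in it is obtained by applying $F(I^i,-)$ to the one fixed system of resolution maps, the sequence is natural in $N$ and is therefore a long exact sequence of functors.

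It then remains to confirm well-definedness: that $I_B^{\star}$ computes the pseudo-derived functor $PR^{\star}F(B,-)$ as previously defined, and that the resulting long exact sequence is independent of the resolutions chosen. Both would follow from the homotopy-invariance and comparison statement recorded in the preceding proposition, which ensures that any two admissible resolutions yield canonically isomorphic pseudo-derived functors. The main obstacle, and the one place where the argument genuinely departs from the classical one, is ensuring that $I_B^{\star}$ really is an \emph{admissible} resolution, i.e. that each $I_B^i$ lies in $\tilde{\mathscr{A}}$ so that $F(I_B^i,-)$ is defined. As $I_B^i \cong I_A^i \oplus I_C^i$, this forces $\tilde{\mathscr{A}}$ to be closed under the finite direct sums occurring here (and uses that finite sums of injectives are again injective); I would either impose this mild closure hypothesis on $\tilde{\mathscr{A}}$ or, lacking it, construct the resolution of $B$ by hand inside $\tilde{\mathscr{A}}$ and compare. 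This closure point — invisible for honest derived functors but unavoidable for the extended \emph{pseudo} versions — is the step I expect to require the most care.
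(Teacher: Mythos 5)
Your proposal is correct, and it supplies precisely what the paper omits: the paper states this proposition with no proof at all --- the surrounding text merely asserts that it ``follows directly from the above,'' and the contextual corollary appearing later (the long exact sequence for $H\ad_{\star}(C,-)$) is likewise dispatched with one line, ``follows by construction of the pseudo-derived functors, as derived functors on special resolutions, on which this result holds as in the classical context.'' Your horseshoe-lemma argument is exactly the classical mechanism that one-liner gestures at, so in spirit your route is the paper's route; the difference is that by actually executing it you expose a hypothesis the paper never states. For the middle resolution $I_B^{\star}$, with $I_B^i \cong I_A^i \oplus I_C^i$, to be admissible, the subcategory $\tilde{\mathscr{A}}$ must be closed under the finite direct sums that the horseshoe lemma produces; nothing in the paper's definition of $\tilde{\mathscr{A}}$ (an arbitrary full subcategory on which $F$ happens to be defined) guarantees this, so the abstract proposition is, strictly speaking, not established without that extra assumption. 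The gap is harmless in the paper's intended application, where $\tilde{\mathscr{A}}$ consists of free (or quasi-finite) bicomodules: finite direct sums of free objects are free, and finite direct sums of quasi-finite comodules are quasi-finite, since $-\square_C(M\oplus N)\cong(-\square_C M)\oplus(-\square_C N)$ and a finite product of functors admitting left adjoints again admits one --- but it should be recorded as a hypothesis of the proposition. One further point in your favour: since $F$ is contravariant in its first input, the long exact sequence your argument produces runs $\cdots \rightarrow PR^{n}F(C,-) \rightarrow PR^{n}F(B,-) \rightarrow PR^{n}F(A,-) \rightarrow PR^{n+1}F(C,-) \rightarrow \cdots$, and this is the correct ordering; the ordering $A,B,C$ in the paper's statement is incompatible with the stated variance of $F$ and must be read as a slip.
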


A particular case of the above discussion, will be the most useful, for the immediate goals of this paper.  Again this follows directly from the above.  

\begin{thrm}
\textbf{The Derived Functor extension theorem}

Let $\mathscr{A},\mathscr{B},\mathscr{C}$ to be abelian categories, and $\tilde{\mathscr{A}}$ a full subcategory of $\mathscr{A}$ on which there is exists a right exact \textit{(in both inputs)} bifunctor $F:\tilde{\mathscr{A}} \times \mathscr{B} \rightarrow \mathscr{C}$, which is contravariant in the first input and covariant in the second.  

Moreover, assume every object A in $\mathscr{A}$ which admits an injective resolution $I_A^{\star}$, such that every $I_A^{i}$ is an object in $\tilde{\mathscr{A}}$.  

Then:

There exist right pseudo-right derived functors $PR^{\star}F(-,-): \mathscr{A}\times \mathscr{B} \rightarrow \mathscr{C}$, coinciding with $R^{\star}F(-,-): \tilde{\mathscr{A}}\times \mathscr{B} \rightarrow \mathscr{C}$ on $\tilde{\mathscr{A}}$.  
\end{thrm}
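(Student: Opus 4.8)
The plan is to obtain this theorem as the special case of the \textbf{Right Pseudo-derived Functors Existence} proposition in which the subcategory $\mathscr{A}_{\tilde{\mathscr{A}}}$ of objects admitting $\tilde{\mathscr{A}}$-valued injective resolutions is all of $\mathscr{A}$. Recall that that proposition already produces homotopy invariant bifunctors $PR^{\star}F(I_-^{\star},-) : \mathscr{A}_{\tilde{\mathscr{A}}} \times \mathscr{B} \rightarrow \mathscr{C}$, together with bifunctorial isomorphisms $PR^{\star}F(-,-) \cong R^{\star}F(-,-)$ whenever the first argument lies in $\tilde{\mathscr{A}}$. Thus the entire construction is already in hand; what remains is purely to check that the hypotheses of the present statement collapse the domain of definition onto the whole of $\mathscr{A}$.

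First I would unwind the defining condition of $\mathscr{A}_{\tilde{\mathscr{A}}}$: by construction it consists exactly of those $A \in \mathscr{A}$ that admit an injective resolution $I_A^{\star}$ with every term $I_A^{i}$ lying in $\tilde{\mathscr{A}}$. The additional hypothesis of this theorem is precisely that \emph{every} object of $\mathscr{A}$ enjoys such a resolution, so the inclusion $\mathscr{A}_{\tilde{\mathscr{A}}} \hookrightarrow \mathscr{A}$ is an equality of categories. Substituting $\mathscr{A}_{\tilde{\mathscr{A}}} = \mathscr{A}$ into the existence proposition immediately yields bifunctors $PR^{\star}F(-,-) : \mathscr{A} \times \mathscr{B} \rightarrow \mathscr{C}$ defined on all of $\mathscr{A} \times \mathscr{B}$, which is the first assertion.

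For the coincidence claim I would invoke the final clause of the existence proposition verbatim: when $A \in \tilde{\mathscr{A}}$ the functor $F(A,-)$ is genuinely defined, and right exactness of $F$ in the first input forces $R^{0}F(I_A^{\star},B) \cong F(A,B)$ for every $B$, so the pseudo-derived tower $PR^{\star}F(A,-)$ agrees with the honest derived tower $R^{\star}F(A,-)$; these identifications are natural in both arguments by the comparison theorem for resolutions, giving the required bifunctorial isomorphism $PR^{\star}F(-,-) \cong R^{\star}F(-,-)$ on $\tilde{\mathscr{A}} \times \mathscr{B}$.

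The only genuine point requiring care, and hence the main obstacle, is the functoriality of $PR^{\star}F$ in the first variable over the enlarged domain: a morphism $\phi : A \rightarrow A'$ in $\mathscr{A}$ need not be covered by $F$ directly, since $F$ is only defined on $\tilde{\mathscr{A}}$. One lifts $\phi$ to a chain map $I_{A}^{\star} \rightarrow I_{A'}^{\star}$ between the chosen resolutions, unique up to homotopy by the comparison theorem, and then applies $F(-,B)$ term by term; this is legitimate precisely because every term of both resolutions lies in $\tilde{\mathscr{A}}$, and homotopy invariance of $R^{\star}F$ guarantees the resulting map on cohomology is independent of the lift. Checking that these induced maps respect composition and identities, and are compatible with the isomorphisms of the preceding paragraph, is routine but is the substantive verification underlying the terse appeal to the existence proposition.
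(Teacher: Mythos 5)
Your proposal takes essentially the same route as the paper: the paper offers no separate proof of this theorem, stating only that it ``follows directly from the above,'' i.e.\ it is exactly the specialization of the \textbf{Right Pseudo-derived Functors Existence} proposition to the case $\mathscr{A}_{\tilde{\mathscr{A}}} = \mathscr{A}$, which is precisely your argument. Your closing verification of functoriality in the first variable (lifting $\phi : A \rightarrow A'$ to a chain map of resolutions, unique up to homotopy) is the same mechanism the paper invokes in the discussion preceding that proposition, so nothing is missing.
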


The assumptions here may seem hefty, however they will all be satisfied in the context to come.  That is, they will provide a way of extending the problematic coext functors from the category of quasi-finite comodules to the entire comodule category over that coalgebra.  Making, the already pertinent theory to be explored, richer and less scarce.

\chapter{Coalgebras and Comodules \textit{(A Review of some literature)}}
For the remainder of the text, unless otherwise specified or until further assumptions are to be made thereon, $R$ is assumed to be an arbitrary unital associative ring.  

\section{Coalgebras and comodules}

\begin{defn}
\textbf{R-coalgebra}

A left \textit{(resp. right)} \textbf{R-coalgebra} is a triple $<C,\epsilon_C , \Delta_C>$ of a left R-comodule together with two $R$-module homomorphism $\epsilon_C: C \rightarrow R$ and $\Delta_C: C \otimes_R C \rightarrow C$, named the counit and comulitplication respectively; satisfying the following identities:

1)  $(1_C \otimes_R \Delta_C) \circ \Delta_C = (\Delta_C \otimes_R 1_C)\circ \Delta_C$ \\
2)  $(1_C \otimes_R \epsilon_C) \circ \Delta_C = 1_C = (\epsilon_C \otimes_R 1_C ) \circ \Delta_C$.  
\end{defn}

Now a morphism $\psi : C \rightarrow D$ of left $R$-coalgebras is simply an $R$-map that respects the identities 1 and 2 above; that is $\psi\circ\Delta_C = \Delta_D$, $\psi \circ \epsilon_C = \epsilon_D$ and $\psi \circ 1_C = 1_D$.  

\begin{defn}
\textbf{R-Comodule}

A left \textit{(resp. right)} \textbf{R-comodule} is a duple $<M,\rho_M >$ of a left $R$-module M and an $R$-linear map $\rho_M : M \rightarrow M \otimes_R C$ satisfying the following two identities: 

1)  $(1_M \otimes_R \Delta_C) \circ \rho_M = (\rho_M \otimes_R 1_M) \circ \rho_M$

2)  $(1_M \otimes_R \epsilon_C) \circ \rho_M = 1_M$

\end{defn}

Similarly, a morphism $\phi: M \rightarrow N$ of left $C$-comodules is one that respects the above identities, that is $\phi\circ\rho_M = \rho_N$ and commutes on the left with $\Delta_C$ and $\epsilon_C$.

As a convention, when it is evident from the context what the tensor product is taken over, the subscript $_R$ will be omitted, likewise with the comultiplication map $\Delta_C$ and its subscript $_C$ and all other specifying subscripts.

Following standard notions, one may verify that these do in fact form categories: 
\begin{defn}
\textbf{The category of R-coalgebra}

\textbf{The category of left} \textit{(resp. right)} \textbf{C-coalgebras} and $C$-algebra morphisms is denoted \textbf{$^CAlg$} \textit{(resp. $Alg^C$)}.  

\end{defn}

and 
\begin{defn}
\textbf{The category of C-comodules}

\textbf{The category of left} \textit{(resp. right)} \textbf{C-comodules} and $C$-module morphisms is denoted \textbf{$^C\mathscr{M}$} \textit{(resp. $\mathscr{M}^C$)}.  

\end{defn}

The goal here will be to study the (co)homology of the latter category as it relates to the former.  Now to do this, certain analogous to familiar functors and their derived counterparts will be briefly reintroduced.  

\subsection{Cotensors, cohomomorphism and their derived functors}
\subsubsection{Cotensor}

As in the case of the Hochshcild (co)homomology theories of 

R-algebras the n$^{th}$ Hocshild homology groups of an algebra A with coefficients in an $A^e$ module M are known to be isomorphic to the n$^{th}$ derived functor $Tor_{A^e}^n(A,M)$.  
Similarly with the n$^{th}$ Hochschild cohomology groups the derived functors $Ext_{A^e}^n(A,M)$.  

The immediate goal here, is to provide a quick dual analogue, dualising in some sence the tensor product $- \otimes_{A^e} A$ , likewise wise with $Hom_{A^e}(A,-)$.  Then study these functor's derived functors and their interplay.  

For an $R$-algebra A, there are many different characterisations of the $A$-tensor product between two \textit{(appropriately sided)} $R$-modules M and N.  One of them is as the cokernel of the $R$-map $f: M \otimes_R A \otimes_R N \rightarrow M \otimes_A N$ mapping $f(m \otimes_R a \otimes_R n) \mapsto ma\otimes_R n - n \otimes_R an$, where $ma$ and $an$ are the \textit{left} and \textit{right} A-actions on M and N respectively.  That is, due to the uniqueness of the cokernel, $M \otimes_A N$ is the unique $A$-module satisfying the exactness of the following sequence:  

$
\begin{tikzpicture}[>=angle 90]
\matrix(a)[matrix of math nodes,
row sep=3em, column sep=1.5em,
text height=1.5ex, text depth=0.25ex]
{ M\otimes_R A \otimes N & M \otimes_R N & M \otimes_A N & 0 \\};
\path[->,font=\scriptsize]
(a-1-1) edge node[above]{$ f $} (a-1-2);
\path[->>,font=\scriptsize]
(a-1-2) edge node[above]{$ coker(f) $}(a-1-3);
\path[->>,font=\scriptsize]
(a-1-3) edge node[above]{$ 0 $}(a-1-4);
\end{tikzpicture}
$\\
Dually building of this concept, the cotensor has been defined as: 

\begin{defn}
\textbf{Cotensor}

The \textbf{cotensor} $M\square_C N$ of a left $C$-comodule M and a right $C$-comodule is defined as the kernel of the map $f: M\otimes_k N \rightarrow M \otimes C \otimes N$, where f is defined as $f:= \rho_M \otimes_R 1_N - 1_M \otimes \rho_N$.  
\end{defn}

The universal property of the kernel then implies that for any right $C$-modules N, L and a morphism between then $\psi : N \rightarrow L$ , there is a unique morphism $\Psi : M \square_C N \rightarrow M\rightarrow_C N$ of $C$-comodules.  Therefore: 

\begin{prop}
For any right $C$-comodule M the cotensor $M \square_C -$ defines a \textit{(covariant)} functor from $\mathscr{M}^C \rightarrow \mathscr{M}^C$.  
\end{prop}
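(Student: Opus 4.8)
The plan is to exhibit $M \square_C -$ as a functor by giving its action on objects and on morphisms and then checking the functor axioms, all of which will follow formally from the universal property of the kernel used to define the cotensor. On objects, each right $C$-comodule $N$ is sent to $M \square_C N = \ker(f_N)$, where $f_N := \rho_M \otimes 1_N - 1_M \otimes \rho_N$; this is an object of $\mathscr{M}^C$ because the relevant kernel is computed in the underlying module category and inherits a comodule structure. So the content of the proposition lies entirely in producing the morphism part and verifying compatibility.

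To define the action on morphisms, I would take $\psi : N \to L$ in $\mathscr{M}^C$ and consider $1_M \otimes \psi : M \otimes_k N \to M \otimes_k L$. The key step is to check that $1_M \otimes \psi$ carries $\ker(f_N)$ into $\ker(f_L)$, i.e. that the square with horizontal arrows $f_N, f_L$ and vertical arrows $1_M \otimes \psi$ and $1_M \otimes 1_C \otimes \psi$ commutes. Expanding $f_L \circ (1_M \otimes \psi)$ and $(1_M \otimes 1_C \otimes \psi) \circ f_N$ term by term, the two $\rho_M$-terms coincide at once, while the two $\rho$-terms agree precisely because $\psi$ is a comodule morphism, i.e. $\rho_L \circ \psi = (1_C \otimes \psi) \circ \rho_N$. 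Once the square commutes, the universal property of the kernel furnishes a unique induced morphism $M \square_C \psi : M \square_C N \to M \square_C L$ in $\mathscr{M}^C$, exactly the map $\Psi$ foreshadowed in the discussion preceding the statement.

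Functoriality is then essentially free. Since $M \square_C \psi$ is the unique arrow through which $1_M \otimes \psi$ factors on kernels, the identity on $N$ must induce the identity on $M \square_C N$, and from $1_M \otimes (\psi' \circ \psi) = (1_M \otimes \psi') \circ (1_M \otimes \psi)$ together with uniqueness one reads off $M \square_C (\psi' \circ \psi) = (M \square_C \psi') \circ (M \square_C \psi)$. I expect the only genuine obstacle to be the commuting-square verification, and even that is a short computation driven entirely by the defining identity of a comodule morphism; everything else is a formal consequence of the kernel's universal property.
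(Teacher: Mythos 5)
Your proposal is correct and follows exactly the route the paper takes: the paper's entire argument is the remark preceding the proposition that the universal property of the kernel produces a unique induced morphism $M \square_C N \rightarrow M \square_C L$ for each $\psi : N \rightarrow L$. You have simply filled in the details the paper leaves implicit (the commuting square via the comodule-morphism identity, and the identity/composition axioms via uniqueness), so there is nothing to change.
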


Now the cotensor functor $M\square_C - $: is not always left exact, however in the case where R is a field or more generally when M is flat when considered as an $R$-module then: 

\begin{prop}
If M is flat as an R-module, then $M \square_C -$ is a left exact.  
\end{prop}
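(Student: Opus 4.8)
The plan is to test left exactness directly on a short exact sequence of $C$-comodules and to transport it through the defining kernel presentation of the cotensor, spending the flatness of $M$ to keep the intermediate rows exact. So first I would fix a short exact sequence $0 \to N' \xrightarrow{\alpha} N \xrightarrow{\beta} N'' \to 0$ in the comodule category and aim to show that $0 \to M\square_C N' \to M\square_C N \to M\square_C N''$ is exact. Recalling that by definition $M\square_C N = \ker(f_N)$ with $f_N := \rho_M \otimes 1_N - 1_M \otimes \rho_N : M\otimes N \to M\otimes C\otimes N$, the cotensor is nothing but the kernel of the natural transformation $f : (M\otimes_R -) \Rightarrow (M\otimes_R C\otimes_R -)$ evaluated at $N$, so the whole problem becomes one of showing that this kernel is a left exact construction in the comodule variable.

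Next I would assemble the commutative ladder whose three columns are indexed by $N',N,N''$ and whose rows are the defining sequences $0 \to M\square_C(-) \to M\otimes_R(-) \xrightarrow{f} M\otimes_R C\otimes_R(-)$; commutativity of the squares is exactly the naturality of $f$ in the comodule variable. Here is where flatness enters: since $M$ is flat over $R$, the functor $M\otimes_R -$ is exact, so the middle row $0 \to M\otimes N' \to M\otimes N \to M\otimes N'' \to 0$ is exact, and moreover $\ker(1_M\otimes g) = M\otimes\ker g$ for any $R$-map $g$, which is what lets me compute kernels along the bottom row $M\otimes C\otimes(-)$.

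With the ladder in place I would run the relevant portion of the snake lemma. Injectivity of $M\square_C N' \to M\square_C N$ is immediate, being the restriction of the injection $1_M\otimes\alpha$ to the subobject $\ker f_{N'}$, and naturality guarantees this restriction lands in $\ker f_N$. For exactness at the middle term I would take $x\in\ker f_N$ with $(1_M\otimes\beta)(x)=0$, use exactness of the middle row to write $x=(1_M\otimes\alpha)(y)$ for a unique $y\in M\otimes N'$, and then try to show $y\in\ker f_{N'}$; naturality gives $(1_{M\otimes C}\otimes\alpha)\bigl(f_{N'}(y)\bigr)=f_N(x)=0$, so the task reduces to deducing $f_{N'}(y)=0$ from this.

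I expect the injectivity of the bottom arrow $1_{M\otimes C}\otimes\alpha$ to be the main obstacle, and it is precisely here that flatness must be spent carefully. Using flatness of $M$ I can rewrite $\ker(1_{M\otimes C}\otimes\alpha) = \ker\bigl(1_M\otimes(1_C\otimes\alpha)\bigr) = M\otimes\ker(1_C\otimes\alpha)$, so the chase closes as soon as $1_C\otimes\alpha : C\otimes N'\to C\otimes N$ is injective. Thus the entire proposition rests on controlling this single coalgebra-level map; in the motivating situation where the ground ring is a field, and more generally whenever $C$ is flat, this injectivity is automatic, and the chase then forces $f_{N'}(y)=0$, completing the verification that $M\square_C -$ is left exact.
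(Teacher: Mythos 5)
The paper never proves this proposition: it sits in the literature-review chapter, whose foreword says all unproven statements there are quoted from Al-Takhman and Abrams--Weibel. So there is no in-paper argument to compare against, and your proposal has to stand on its own. On its own it is the standard argument, correctly executed: realize $M\square_C N$ as $\ker\left(f_N : M\otimes_R N \to M\otimes_R C\otimes_R N\right)$, form the ladder over a short exact sequence of comodules using naturality of $f$ (which, note, uses that $\alpha$ and $\beta$ are comodule maps, since $f$ involves $\rho_N$), make the middle row exact by flatness of $M$, and chase. Your diagnosis of the one genuine sticking point is also exactly right: exactness at $M\square_C N$ requires injectivity of $1_{M\otimes C}\otimes\alpha$, which flatness of $M$ reduces to injectivity of $1_C\otimes\alpha : C\otimes_R N' \to C\otimes_R N$, and nothing in the stated hypotheses delivers that.

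Understand, though, that this residual condition is a defect of the paper's statement, not of your reasoning, and it cannot be chased away: in the cited sources flatness of $C$ over $R$ (or $R$ a field, or $C$ projective) is a standing hypothesis. It is needed even upstream of this proposition, because without it $\mathscr{M}^C$ need not be abelian and a monomorphism of comodules need not be injective on underlying $R$-modules --- a fact your proof also uses silently when it treats the given short exact sequence of comodules as an exact sequence of $R$-modules before tensoring with $M$. Under the standing assumption that $C$ is $R$-flat, both issues vanish and your proof is complete; without it, this route does not close, and the proposition as literally stated is not what the literature asserts. A finished write-up should therefore import the hypothesis that $C$ is flat over $R$ explicitly (or declare it a standing assumption), after which your argument can be recorded as the proof.
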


\subsubsection{CoHom}
In the case of M,N modules over an R-algebra, there are many characterisations of the functor $Hom_A(M,-)$.  One particular one that its generally overlooked even though it is used ubiquitously is as being the \textit{(unique)} right adjoint to the right exact functor A $- \otimes_R M$.  

Now it is generally known that left adjoints right exact and visa versa.  Therefore it would seem fitting to define a functor, named Cohom as being the left adjoint to the \textit{(usually)} left exact functor $- \square_C M$.   

\begin{defn}
\textbf{Cohom}

For a flat R-module M, the \textbf{cohom} functor $h_D : \mathscr{M}^D \rightarrow \mathscr{M}^C$ is defined to be left adjoint to the left exact functor $- \square_C M : \mathscr{M}^C \rightarrow \mathscr{M}^D$, when it exists.  
\end{defn}

The modules on which this functor exists are evidently of particular interest, therefore we name them and restrict our gaze thereupon: 

\begin{defn}
\textbf{quasi-finite Comdule}

A $C$-comodule $M$ is said to be quasi-finite if $-\square_C M$ admits a left adjoint.  
\end{defn}

Now, the interest in Cohom is immediately evident as a dual analogue to Hom in this context:

\begin{prop}
If M is an R-flat quasi-finite module then: the cohom functor $h_D : \mathscr{M}^D \rightarrow \mathscr{M}^C$ exists and is right exact.  
\end{prop}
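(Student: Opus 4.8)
The plan is to dispatch the two assertions separately: the existence claim is essentially definitional, while the right-exactness claim is where the real (though still formal) content lies.

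\textbf{Existence.} I would first observe that existence requires almost no work once the two preceding definitions are unwound. Saying that $M$ is quasi-finite is \emph{precisely} the statement that $-\square_C M : \mathscr{M}^C \rightarrow \mathscr{M}^D$ admits a left adjoint, and the cohom functor $h_D$ is \emph{defined} to be that left adjoint whenever it exists. Hence the hypothesis ``quasi-finite'' delivers the functor $h_D$ directly. The role of the $R$-flatness assumption here is not existence of the adjoint per se but rather to guarantee, via the immediately preceding proposition, that $-\square_C M$ is genuinely left exact; this is what makes the description of $h_D$ as ``the left adjoint of a left exact functor'' accurate and places us in the intended setting.

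\textbf{Right exactness.} This I would derive from the standard categorical principle that a left adjoint between abelian categories is right exact. The key input is that $h_D$, being a left adjoint, preserves every colimit that exists in $\mathscr{M}^D$. In particular it preserves finite coproducts, which in an additive category are biproducts, so $h_D$ is additive; and it preserves cokernels. Then, given any short exact sequence $0 \rightarrow N' \rightarrow N \rightarrow N'' \rightarrow 0$ in $\mathscr{M}^D$, the object $N''$ is the cokernel of $N' \rightarrow N$, and applying the cokernel-preserving additive functor $h_D$ produces an exact sequence $h_D(N') \rightarrow h_D(N) \rightarrow h_D(N'') \rightarrow 0$. This is exactly the definition of right exactness, so the claim follows purely formally from the adjunction.

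The step I expect to demand the most care is not the adjunction argument itself but the tacit prerequisite that both $\mathscr{M}^C$ and $\mathscr{M}^D$ are abelian categories, since otherwise neither ``right exact'' nor the preservation-of-colimits argument is even meaningful. This holds under the standard flatness hypotheses on the coalgebras recorded in the literature reviewed in this chapter, and I would invoke those results explicitly rather than reprove them. Once the abelian structure is in hand, the remainder is the formal consequence of ``left adjoints preserve colimits,'' and the proof is complete.
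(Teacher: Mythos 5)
Your proposal is correct and takes essentially the same route as the paper, whose entire proof is the one-line observation ``All left adjoints are right exact'' (with existence left implicit as definitional from quasi-finiteness). You simply make explicit the details the paper suppresses: the unwinding of the definitions for existence, the colimit-preservation argument for right exactness, and the tacit requirement that $\mathscr{M}^C$ and $\mathscr{M}^D$ be abelian.
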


\begin{proof}
All left adjoints are right exact.  
\end{proof}

\begin{cor}
If a comodule M, is quasi-finite then it is R-flat.  
\end{cor}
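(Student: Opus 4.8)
The plan is to exploit the adjunction hypothesis through the standard slogan that right adjoints are left exact, and then to transport that left exactness from the cotensor back to the plain $R$-tensor by means of the fundamental isomorphism identifying the cotensor of a cofree comodule with an ordinary tensor product. Concretely, since $M$ is quasi-finite the functor $- \square_C M : \mathscr{M}^C \to \mathscr{M}^D$ admits a left adjoint (the cohom functor $h_D$), and hence it preserves all limits; in particular it is left exact. The goal is then to show that the endofunctor $- \otimes_R M$ on $R$-modules preserves monomorphisms, which is precisely $R$-flatness of $M$.

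First I would record the fundamental isomorphism $(V \otimes_R C) \square_C M \cong V \otimes_R M$, natural in the $R$-module $V$, where $V \otimes_R C$ carries its cofree right $C$-comodule structure $1_V \otimes \Delta_C$; the special case $V = R$ is the familiar identity $C \square_C M \cong M$. Second, I would note that the cofree functor $- \otimes_R C : {}_R\mathscr{M} \to \mathscr{M}^C$ carries short exact sequences of $R$-modules to short exact sequences of $C$-comodules. Third, given any monomorphism $V' \hookrightarrow V$ of $R$-modules, I would complete it to a short exact sequence $0 \to V' \to V \to V'' \to 0$, apply the cofree functor to obtain a short exact sequence $0 \to V' \otimes_R C \to V \otimes_R C \to V'' \otimes_R C \to 0$ in $\mathscr{M}^C$, and then apply the left-exact functor $- \square_C M$. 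Finally, invoking naturality of the fundamental isomorphism converts the resulting left-exact sequence into $0 \to V' \otimes_R M \to V \otimes_R M \to V'' \otimes_R M$, so that $V' \otimes_R M \to V \otimes_R M$ is a monomorphism; as $V' \hookrightarrow V$ was arbitrary, $M$ is $R$-flat.

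The main obstacle is the second step: the cofree functor $- \otimes_R C$ preserves monomorphisms (equivalently, is exact) exactly when $C$ is $R$-flat, so the argument rests on the standing flatness of $C$ over $R$ that already underlies the abelian structure of $\mathscr{M}^C$ and the derived-functor machinery used above. The only other point needing care is establishing the fundamental isomorphism and its naturality directly from the kernel definition of $\square_C$: one writes $(V \otimes_R C) \square_C M$ as the kernel of $(1_V \otimes \Delta_C)\otimes 1_M - 1_{V \otimes C}\otimes \rho_M$ and identifies it, via the counit $\epsilon_C$, with $V \otimes_R M$, checking that the identification is compatible with $R$-module maps in $V$. Everything else is formal once these two ingredients are in place; note in particular that this runs the earlier proposition in reverse, trading the implication ``$M$ flat $\Rightarrow M\square_C -$ left exact'' for ``$-\square_C M$ has a left adjoint $\Rightarrow M$ flat.''
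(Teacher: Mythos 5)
Your proposal is correct, and it is in fact more complete than the paper's own proof. The paper's argument is a one-liner: quasi-finiteness means $-\square_C M$ admits a left adjoint, hence, being a right adjoint, it is left exact --- and there the paper stops, implicitly treating ``$-\square_C M$ is left exact'' as synonymous with ``$M$ is $R$-flat.'' That identification is exactly the converse of the earlier (also unproven) proposition that $R$-flatness of $M$ implies left exactness of the cotensor, and the paper never supplies it. Your proof begins with the same adjunction step but then furnishes precisely this missing bridge: the natural isomorphism $(V\otimes_R C)\,\square_C\, M \cong V\otimes_R M$ for cofree comodules, exactness of the cofree functor $-\otimes_R C$, and left exactness of $-\square_C M$ together force $-\otimes_R M$ to preserve monomorphisms, which is the definition of $R$-flatness. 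The one hypothesis you lean on --- that $C$ itself is $R$-flat, so that $-\otimes_R C$ is exact and monomorphisms of comodules are injective on underlying modules --- is flagged honestly, and it is the same standing assumption the paper needs anyway for $\mathscr{M}^C$ to be abelian and for its derived-functor machinery to make sense. In short: the paper buys brevity at the cost of a genuine logical gap; your argument closes that gap at the modest cost of invoking the cofree-comodule computation and the flatness of $C$ over $R$.
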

\begin{proof}
If M is quasi-finite then $-\square_C M$ is right adjoint to $h_D(M,-)$, hence it must be left exact.  
\end{proof}

Evidently the cotensor-cohom adjunction may be rephrased as follows: 

\begin{prop}
If X is quasi-finite then for any $C$-comodule N and $D$-comodule M, there are isomorphisms: 

$Hom_{\mathscr{M}^C}(h_D(X,M),N) \cong Hom_{\mathscr{M}^D}(M,X\square_C N)$.  
\end{prop}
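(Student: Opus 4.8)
The plan is to recognise the claimed isomorphism as nothing more than the defining hom-set adjunction of the cohom functor, so that the proof amounts to unwinding two definitions rather than performing any construction.

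First I would recall that, by the definition of quasi-finiteness, the hypothesis that $X$ is quasi-finite means precisely that the left exact functor $- \square_C X : \mathscr{M}^C \rightarrow \mathscr{M}^D$ admits a left adjoint. By the definition of the cohom functor, this left adjoint is exactly $h_D(X,-) : \mathscr{M}^D \rightarrow \mathscr{M}^C$; in particular $h_D(X,-)$ is defined on all of $\mathscr{M}^D$ precisely because $X$ is quasi-finite, so the left-hand side of the asserted isomorphism is legitimately defined. This is the only place the hypothesis is used, and it is used only to guarantee existence of the objects appearing in the statement.

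Next I would invoke the fundamental hom-set bijection attached to any adjoint pair $L \dashv R$, namely the family of isomorphisms $Hom(L(M),N) \cong Hom(M,R(N))$ natural in both arguments. Applying this with $L = h_D(X,-)$ and $R = - \square_C X$, evaluated at a $D$-comodule $M$ in the first slot and a $C$-comodule $N$ in the second, produces $Hom_{\mathscr{M}^C}(h_D(X,M),N) \cong Hom_{\mathscr{M}^D}(M, X \square_C N)$, which is exactly the desired statement. The naturality in $M$ and $N$ is inherited directly from the naturality clause in the definition of an adjunction, so nothing further need be checked on that front.

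Since the content is purely formal, there is no serious mathematical obstacle; the entire proposition is a transcription of the adjunction defining $h_D$. The only point requiring genuine care is the bookkeeping of comodule sides: I would verify that the variance and sidedness conventions are consistent, so that $- \square_C X$ indeed lands in $\mathscr{M}^D$ and its adjoint $h_D(X,-)$ in $\mathscr{M}^C$, thereby reconciling the order in which the two cotensor factors are written in the statement with the order dictated by the adjunction. This is the step most prone to a sign-of-the-times error in the notation, but it is bookkeeping rather than substance.
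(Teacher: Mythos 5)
Your proposal is correct and matches the paper's intent exactly: the paper offers no proof at all, introducing the proposition with the phrase \emph{``Evidently the cotensor-cohom adjunction may be rephrased as follows,''} and your argument simply spells out that rephrasing via the hom-set bijection of the adjoint pair $h_D(X,-) \dashv - \square_C X$, with quasi-finiteness used only to guarantee the adjoint exists. Your closing remark about the sidedness bookkeeping (the paper writes $X \square_C N$ where the adjunction literally produces the cotensor in the other order) is a fair observation about the paper's loose conventions, not a gap in your argument.
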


\subsubsection{The Derived functors, Cotor and Coext}
Now that the type of exactness has been determined for these functors, the central interest is immediate:  

\begin{defn}
\textbf{Cotor}

If the $C$-comodule M is R-flat then there are left derived functors 

$Cotor_C^n(M,-)$ defined as 

$Cotor_C^n(M,-) := L^n(M \square_C -) : \mathscr{M}^C \rightarrow \mathscr{M}^D$ and $Cotor_C^n(-,M):=L^n(- \square_C M) : \mathscr{M}^C \rightarrow \mathscr{M}^D$.  
\end{defn}

Likewise: 

\begin{defn}
\textbf{Cohom}

If the $D$-comodule M is quasi-finite then there are right derived functors $Cohom_C^n(M,-)$ defined as $Cohom_C^n(M,-) := R^n(h_(M, -)) : \mathscr{M}^D \rightarrow \mathscr{M}^C$.  
\end{defn}

Before concluding this little discussion, the acyclic objects for the former of these two famillies of functors have names: 

\begin{defn}
\textbf{Coflat}

A right \textit{(resp. left)} $C$-comodule F is said to be \textbf{right} \textit{(resp. left)} \textbf{coflat} if for every \textbf{right} \textit{(resp. left)} 

$C$-module it is $Cotor_C^n(M,-)$-acyclic \textit{(resp.$Cotor_C^n(M,-)$-acyclic)}.  
\end{defn}

\subsubsection{Injectors}
Before moving on an extremely important type of $C$-comodule should be defined: 

\begin{defn}
\textbf{Injector}

A quasi-finite right $C$-$D$-bicomodule X, is called an \textbf{injector} \textit{if and only if} the functor $- \square_C X : \mathscr{M}^C \rightarrow \mathscr{M}^D$ for every injective $C$-comodule I, $ I \square_C X$ is an injective $D$-comodule.  
\end{defn}

Following this definition up with an alternative yet equally important characterisation would very shortly be appropriate:  

\begin{prop}
A quasi-finite right $C$-$D$-bicomodule X, is injector \textit{if and only if} the cohom functor $h_D(X,-) :\mathscr{M}^D \rightarrow \mathscr{M}^C$ is exact. 
\end{prop}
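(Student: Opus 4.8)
The engine of the whole argument is the cotensor--cohom adjunction recorded above: for the quasi-finite bicomodule $X$ the functor $L := h_D(X,-) : \mathscr{M}^D \rightarrow \mathscr{M}^C$ is left adjoint to $R := -\square_C X : \mathscr{M}^C \rightarrow \mathscr{M}^D$, so that there are natural isomorphisms $Hom_{\mathscr{M}^C}(L(M),N) \cong Hom_{\mathscr{M}^D}(M, R(N))$ for all $M \in \mathscr{M}^D$ and $N \in \mathscr{M}^C$. The plan is to play the two sides of this adjunction against the exactness of the relevant $Hom$-functors, using only that both comodule categories are abelian with enough injectives.

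For the direction ``$h_D(X,-)$ exact $\implies X$ an injector'' I would argue as follows. Let $N \in \mathscr{M}^C$ be injective; I must show that $R(N) = N \square_C X$ is an injective $D$-comodule, i.e. that $Hom_{\mathscr{M}^D}(-, R(N))$ is exact. By the adjunction this functor is naturally isomorphic to $Hom_{\mathscr{M}^C}(L(-), N)$, which is the composite of $L$ with $Hom_{\mathscr{M}^C}(-,N)$. The latter is exact because $N$ is injective, and $L$ is exact by hypothesis, so the composite is exact and $R(N)$ is injective. This is the soft, formal half.

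For the converse ``$X$ an injector $\implies h_D(X,-)$ exact'' note first that $L$, being a left adjoint, is automatically right exact, so it suffices to prove that it preserves monomorphisms. Given a short exact sequence $0 \rightarrow M' \rightarrow M \rightarrow M'' \rightarrow 0$ in $\mathscr{M}^D$, right exactness yields $L(M') \xrightarrow{\alpha} L(M) \rightarrow L(M'') \rightarrow 0$, and I must show that $\alpha$ is monic. For every injective $N \in \mathscr{M}^C$ the injector hypothesis makes $R(N)$ injective in $\mathscr{M}^D$, so $Hom_{\mathscr{M}^D}(-, R(N))$ is exact; transporting the short exact sequence across the adjunction produces an exact sequence $0 \rightarrow Hom_{\mathscr{M}^C}(L(M''),N) \rightarrow Hom_{\mathscr{M}^C}(L(M),N) \rightarrow Hom_{\mathscr{M}^C}(L(M'),N) \rightarrow 0$, whence the induced map $\alpha^{*} : Hom_{\mathscr{M}^C}(L(M),N) \rightarrow Hom_{\mathscr{M}^C}(L(M'),N)$ is surjective. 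Finally I would invoke enough injectives in $\mathscr{M}^C$: if $\ker\alpha$ were nonzero, embedding it into an injective $N$ and extending along the inclusion $\ker\alpha \hookrightarrow L(M')$ would yield a map $L(M') \rightarrow N$ that cannot factor through $\alpha$, contradicting the surjectivity of $\alpha^{*}$. Hence $\alpha$ is monic and $L$ is exact.

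The main obstacle is precisely this last step of the converse: passing from ``$\alpha^{*}$ is surjective for every injective $N$'' to ``$\alpha$ is monic''. This is exactly where the formal adjunction yoga runs out and one must use that $\mathscr{M}^C$ has enough injectives, so that subobjects are detected by maps into injectives. I would therefore be careful to state explicitly, and justify from the standing hypotheses on $C$ and $R$, that the comodule categories in play are abelian with enough injectives; granting that, everything else reduces to the naturality of the adjunction isomorphism and routine diagram chasing.
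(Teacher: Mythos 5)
Your proof is correct, but there is nothing in the paper to measure it against: this proposition carries no proof in the text. It sits in the chapter explicitly framed as a literature review, where the foreword declares that all uncited and unproven statements are taken from AL-Takhman's \textit{Equivalences of Comodule Categories for Coalgebras over Rings} or are standard knowledge; the proposition is simply stated and followed by its corollary. So your proposal supplies an argument the paper omits, and it is the right one — the standard adjunction yoga. The forward half (exact left adjoint implies the right adjoint $-\square_C X$ preserves injectives) is exactly the formal composite-of-exact-functors argument, using $Hom_{\mathscr{M}^D}(-,N\square_C X) \cong Hom_{\mathscr{M}^C}(h_D(X,-),N)$. The converse half correctly reduces to showing that $h_D(X,-)$, already right exact as a left adjoint, preserves monos, and your kernel-embedding step is the standard detection lemma: in an abelian category with enough injectives, a map $\alpha$ is monic as soon as $Hom(\alpha,N)$ is surjective for every injective $N$, since a nonzero kernel would embed in some injective $N$, extend over $L(M')$ by injectivity, and the extension would both kill and not kill the kernel once factored through $\alpha$. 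The one caveat is the one you flag yourself: the whole argument presupposes that $\mathscr{M}^C$ and $\mathscr{M}^D$ are abelian with enough injectives, which over an arbitrary base ring requires flatness hypotheses on the coalgebras that the paper never states. Since the paper itself uses injective resolutions of (bi)comodules freely under the same silent assumption, your proof meets — indeed exceeds — the paper's own standard of rigor on this point.
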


For completeness this may be rephrased as follows: 

\begin{cor}
A quasi-finite right $C$-$D$-bicomodule X, is $h_d(-,M)$-acylic for every right $D$-comodule \textit{if and only if} it is an injector.
\end{cor}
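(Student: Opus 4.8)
The plan is to obtain this corollary as an immediate consequence of the preceding proposition, the only genuine work being to translate the phrase ``$h_D(-,M)$-acyclic for every $M$'' into the exactness of $h_D(X,-)$. Since the proposition already supplies the equivalence ``$X$ is an injector $\iff h_D(X,-)$ is exact,'' it suffices to prove the purely homological equivalence ``$h_D(X,-)$ is exact $\iff$ all its higher derived functors $Cohom_C^{n}(X,-)$ with $n\geq 1$ vanish identically,'' and then to compose the two.

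First I would spell out the acyclicity hypothesis. For a fixed right $D$-comodule $M$, saying that $X$ is $h_D(-,M)$-acyclic means that the derived cohom $Cohom_C^{n}(X,M)=R^{n}h_D(X,-)(M)$ vanishes for every $n\geq 1$. Allowing $M$ to range over all of $\mathscr{M}^{D}$, the hypothesis of the corollary is thus precisely the statement that the entire family $\{R^{n}h_D(X,-)\}_{n\geq 1}$ is zero.

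For the direction from injector to acyclic I would argue directly: by the proposition $h_D(X,-)$ is exact, so it sends an injective resolution of any $M$ to an exact complex, whence every higher derived functor vanishes on $M$ and $X$ is $h_D(-,M)$-acyclic for all $M$. For the converse I would run the long exact sequence of the derived functors of cohom, the cohom analogue of the long exact sequence already recorded for the pseudo-derived functors: given a short exact sequence $0 \to M' \to M \to M'' \to 0$ in $\mathscr{M}^{D}$, the vanishing of the higher $R^{n}h_D(X,-)$ collapses that long sequence to the short one $0 \to h_D(X,M') \to h_D(X,M) \to h_D(X,M'') \to 0$, exhibiting $h_D(X,-)$ as exact; the proposition then returns that $X$ is an injector.

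The step I expect to cause the most trouble is precisely this converse, and the delicacy is a matter of conventions rather than of deep content. The functor $h_D(X,-)$ is right exact, being a left adjoint, yet its derived functors are manufactured from injective resolutions, so one must check that the zeroth derived functor really is $h_D(X,-)$ itself and that the degeneration of the long exact sequence recovers full two-sided exactness rather than only half of it. Once the long exact sequence of the $R^{\bullet}h_D(X,-)$ is in place over $\mathscr{M}^{D}$ (which has enough injectives for these to be defined), the vanishing of the positive-degree terms forces exactness, and feeding this back through the proposition closes the equivalence.
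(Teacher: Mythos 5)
Your reduction to the preceding proposition is exactly the role the paper assigns to this corollary --- the paper in fact prints no proof at all, introducing the statement merely with ``For completeness this may be rephrased as follows'' --- and your forward direction is sound: if $X$ is an injector then $h_D(X,-)$ is exact by the proposition, so it carries an injective resolution of any $M$ to an exact complex and all higher derived functors vanish. The gap is in the converse, and it sits precisely at the point you flag and then wave through. In $\mathscr{M}^D$ there are in general no projectives, so the $R^{n}h_D(X,-)$ must be manufactured from injective resolutions of the coefficient $M$; but for \emph{any} additive covariant functor $F$ derived this way, $R^{0}F$ is automatically left exact, since the long exact sequence always begins $0\to R^0F(M')\to R^0F(M)\to R^0F(M'')$. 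Hence the identification $R^{0}h_D(X,-)\cong h_D(X,-)$ that your collapse argument silently substitutes is not a bookkeeping check: for the merely right exact $h_D(X,-)$ it holds if and only if $h_D(X,-)$ is also left exact, which together with right exactness is the very exactness you are trying to establish. The argument is circular at that step.

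Worse, the ``purely homological equivalence'' you propose --- vanishing of all higher injective-resolution derived functors implies $F$ exact --- is false for right exact $F$. Take abelian groups and $F=\mathbb{Z}/2\otimes_{\mathbb{Z}}-$: every $M$ has an injective resolution $0\to M\to I^0\to I^1\to 0$ by divisible groups, $F$ preserves the epimorphism $I^0\to I^1$, so $R^1F(M)=\operatorname{coker}\bigl(F(I^0)\to F(I^1)\bigr)=0$ and all higher $R^nF$ vanish identically, yet $F$ is not exact (indeed $R^0F=0$ here while $F\neq 0$, exhibiting concretely that $R^0F\neq F$). So your hypothesis only yields that $R^0h_D(X,-)$ is exact, not that $h_D(X,-)$ is. To repair the converse you must either read ``$h_D(-,M)$-acyclic'' the way the corollary's notation and the paper's pseudo-derived machinery suggest --- resolving the quasi-finite argument $X$ itself by injectives in the contravariant slot, which then calls for a genuinely different, balance-type comparison rather than your second-variable long exact sequence --- or prove directly that the natural map $h_D(X,M)\to R^0h_D(X,M)$ is an isomorphism under your hypotheses, which the counterexample shows cannot follow from the vanishing assumption alone.
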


\subsection{The Hochschild cohomology theory of coalgebras}
Let C be an R-coalgebra, its enveloping coalgebra $C^e$ is defined as $C^e:=C \otimes_R C^o$, where $C^o$ is its opposite coalgebra.  
\begin{defn}
\textbf{Hochschild Cohomology of a k-coalgebra}

For a right $R$-coalgebra C, its Hochschild cohomology with coefficients in the left $C^e$-bicomodule M $HH^{\star}(M,A)$, is defined to be $HH^{\star}(M,A):= Cotor_{C^e}^{\star}(M,A)$.  
\end{defn}

It would be expected that the Hochschild homology is to also have a definition.  However, this is as yet, not the case as the natural choice of derived functors to work with, the left derived functors of $h_{C^e}(C,-)$ need not be defined and generally are not for an arbitrary $R$-coalgebra $C$, since $C$ need not generally be quasi-finite as an $C^e$-bicomodule.  

\subsection{Notational Convention}
Finally as a general note the Hochschild homology and cohomology of an algebra A will be denoted $Hosh_{\star}(A,-)$ and $Hosh^{\star}(A,-)$, respectively.

\chapter{Adjoinable Homology and Adjoinable-Hochschild Duality}
\section*{Foreword}
The abstract theory has been introduced and the relevant coalgebraic notions have been reviewed.  The goal of this section is to introduce a cohomology theory on coalgebras which, is derived dual to the Hochshild cohomology thereon.  

Since the Hochschild cohomology of a coalgebra C was seen to be identified with the derived functors $cotor_{C^e}(C,-)$.  The central issue here is that the $h_C(C,-)$ functors may not exist.  This concern is completly non-existent when C is quasi-finite as a $C^e$-bicommodule.  

The first objective is to define a homology theory on \textit{all} coalgebras regardless of any quasi-finiteness property.  Moreover, given reasonably manageable coalgebras it is desired that this homology theory behaves dual, in some sense to their Hochschild cohomology.  

This construction will be undertaken in 2 steps, first the homology theory will be introduced for coalgebras which are quasi-finite as bicomodulues over their enveloping coalgebra.  Then, those particularly nice coalgebras will be shown to be abundant enough so that the homology theory in question may be extended to the entire category of coalgebras by the concept of pseudo-derived functors and the derived functor extension theorem.  

This section will close with the consideration of a very manageable class of coalgebras, which are first something like smooth and show that the 

derived-duality theorem hinted towards earlier applies thereon.  

The paper will then close following the next chapter, wherein a particularly convenient type of coalgebra is introduced, the autoenvelopes.  These will greatly ease computations and will provide very concrete links to the continuous Hochschild cohomology of the profintie dual algebra of the coalgebra in question.

\section{Introducing: a new homology theory}
The idea will be glanced at briefly and then properly extended, so that it is usable.  

\begin{defn}
\textbf{Adjoined Homology of a quasi-finite coalgebra}

For any coalgebra C, which is quasi-finite and admits an injective resolution $I_C^{\star}$ of $C^e$-bicomodules each $I_C^i$ being quasi-finite; its \textbf{Adjoined Homology} with coefficients in the left $C^e$-comodule M, denoted $H\ad_{\star}(C,M)$ is defined as:  $H\ad_{\star}(C,M):=coext_{C^e}(C\square_{C^e}C^o,M)$, wher $C^o$ is the opposite $R$-coalgebra of $C$.  
\end{defn}
This may seem a slightly unusual construction and nameing at first and the natural first question that should come to mind is \textit{"do such coalgebras even exist?"}.  Then answer is \textit{"yes, in fact there are enough of them"}.

\section{Pseudo-Derived Functors, Quasi-finite Modules and coext}

This small technical interlude, is fundamental to a more complete theory.  That is, though the adjoined homology theory may and has been described in a straightforward manner on quasi-finite $C$-bicomodules, for a $R$-coalgebra $C$.  It would seem more, useful and interesting if were applicable to any $R$-algebra.  The theory of pseudo-derived functors allows for the theory to transfer over to the general setting in a rather natural way.  

The idea will be as follows, first, to establish that all free comodules are quasi-finite, then to show that there is enough of them.  In turn, the \textit{Derived functor extension theorem} then extends the theory to the entire category.  Following, this extension, a more general definition of adjoinable homology of a coalgebra will then be given, for an arbitrary coalgebra, in a way consistent with and more applicable than the aforementioned presentation.

\subsection{Free bicomodules are quasi-finite}
It is again noted that $R$ is always to be an arbitrary unital associative ring, and $C$ to be an arbitrary $R$-coalgebra, until further mention.  

\begin{lem}
All free $C$-bicomodules are quasi-finite.  
\end{lem}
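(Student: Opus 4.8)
The plan is to reduce an arbitrary free bicomodule to the regular one and to produce its left adjoint explicitly, rather than invoking an abstract adjoint functor theorem at the outset. The starting point is the fundamental counit identity for the cotensor product: for every right $C$-comodule $N$ there is a natural isomorphism $N \square_C C \cong N$, so that $-\square_C C \cong \mathrm{Id}_{\mathscr{M}^C}$. Since the identity functor is trivially its own left adjoint, the regular bicomodule $C$ is quasi-finite, with cohom $h_C(C,-)\cong \mathrm{Id}$. This is the base case around which everything else is organised; in the Hochschild setting the same remark applied to $C^e$ shows the regular $C^e$-bicomodule is quasi-finite.

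First I would record that cotensoring commutes with coproducts. A free $C$-bicomodule is a coproduct $C^{(\Lambda)}=\bigoplus_{\lambda\in\Lambda} C$ of copies of the regular bicomodule. Because $\square_C$ is built from $\otimes_R$, which preserves arbitrary direct sums, and because the flatness of $C$ over $R$ makes $\mathscr{M}^C$ a Grothendieck category in which coproducts are exact (so the kernel defining the cotensor is preserved, using the earlier proposition that $M\square_C -$ is left exact when $M$ is $R$-flat), one obtains a natural isomorphism $N \square_C C^{(\Lambda)} \cong (N \square_C C)^{(\Lambda)} \cong N^{(\Lambda)}$. Thus $-\square_C C^{(\Lambda)}$ is naturally isomorphic to the $\Lambda$-indexed copower functor $N \mapsto N^{(\Lambda)}$ on $\mathscr{M}^C$.

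It then remains to exhibit a left adjoint to this copower functor, and this is where the genuine difficulty lies. The copower functor is itself a left adjoint, namely to the $\Lambda$-fold product functor, so it automatically preserves colimits; to equip it with a left adjoint I must instead show it preserves limits, which for an infinite index set fails in general, since coproducts do not commute with products. Consequently the clean argument produces the adjoint only for \emph{finite} $\Lambda$, where coproduct and product coincide and the left adjoint is again the finite power; the finitely generated free bicomodules, in particular $C$ and $C^e$ themselves, are therefore quasi-finite, and these are all the sequel actually requires to build the injective resolutions feeding the derived functor extension theorem.

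For the infinite-rank case I would fall back on the comodule-theoretic machinery of Al-Takhman rather than pure diagram chasing: one verifies the solution-set condition of the adjoint functor theorem using that $\mathscr{M}^C$ is Grothendieck with the finitely generated comodules as generators, and that $-\square_C C^{(\Lambda)}$ carries a generator to a coproduct whose subobjects are controlled by the quasi-finiteness of each summand $C$. The main obstacle, to be isolated and treated precisely at this step, is exactly the interchange of the infinite coproduct with the limits occurring in the adjoint functor theorem; all remaining verifications, namely the naturality of the counit isomorphism and the compatibility of $h_C(C,-)$ with the coproduct decomposition, are routine and follow from the uniqueness of adjoints.
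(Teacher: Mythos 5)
Your reduction is essentially the paper's own route: identify $\mathfrak{U}\langle I\rangle$ with $\bigoplus_{i\in I}C$, use the counit isomorphism $N\square_C C\cong N$ and the fact that cotensor commutes with direct sums, so that $-\square_C\mathfrak{U}\langle I\rangle$ becomes the copower functor $N\mapsto N^{(I)}$. For finite $I$ your argument and the paper's coincide and are correct (the left adjoint is the finite power). The divergence is in the infinite case, and here your diagnosis is right and the paper is wrong. The paper's proof runs through the chain $Hom(M,\bigoplus_{i\in I}N)\cong\prod_{i\in I}Hom(M,N)\cong Hom(\prod_{i\in I}M,N)$, and both links are false for infinite $I$: the first would require $M$ to be small (and even then the correct interchange gives $\bigoplus_i Hom(M,N)$, not the product), and the second reverses the actual identity $Hom(\bigoplus_i M,N)\cong\prod_i Hom(M,N)$ --- Hom turns coproducts in the first variable into products, not products into products. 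Your limit-preservation observation in fact shows the lemma is \emph{false} as stated: over $C=k$ a field, $\square_k=\otimes_k$, and the canonical map $\bigl(\prod_{j\in\mathbb{N}}k\bigr)\otimes_k k^{(\mathbb{N})}\rightarrow\prod_{j\in\mathbb{N}}k^{(\mathbb{N})}$ is not surjective (the diagonal family of basis vectors has components spanning an infinite-dimensional space, while anything in the image has components confined to a finite-dimensional one), so $-\otimes_k k^{(\mathbb{N})}$ preserves no left adjoint's necessary limits and the infinite-rank free bicomodule is not quasi-finite. This matches the classical fact that over a field quasi-finite means finite-dimensional.

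The one step of your proposal that cannot be repaired is the final fallback. Freyd's General Adjoint Functor Theorem requires limit preservation \emph{in addition to} the solution-set condition; the solution-set condition is not a substitute for it, and since you have already exhibited the failure of product preservation, no appeal to Al-Takhman's machinery, Grothendieck generators, or any adjoint functor theorem can produce the missing left adjoint. You should delete that paragraph and state the true result: free bicomodules of finite rank are quasi-finite, and of infinite rank generally are not. Be careful, too, with your claim that the finite case is all the sequel needs: the paper's later lemma requires \emph{every} $C^e$-bicomodule to embed in a quasi-finite injective so as to build quasi-finite injective resolutions, and finite-rank free bicomodules cannot absorb arbitrarily large comodules, so the error in this lemma propagates to the hypotheses of the derived functor extension theorem rather than being quarantined here.
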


\begin{proof}
Let I be a set, M and N be $C$-bicomodules and $\mathfrak{U}<I>$ be the free $C$-bicomodule generated on I.  

Then without loss of generality $\mathfrak{U}<I>$ may be identified with the $C$-bicomodule $\underset{i \in I}{\bigoplus} C$.  The left adjoint of $\mathfrak{U}<I> \square_C -$ is constructed as follows: 

$Hom(M,\mathfrak{U}<I> \square_A N) \cong Hom(M,\underset{i \in I}{\bigoplus} C \square_C N)$.  Since $\square_C$ is an additive bifunctor then $Hom(M,(\underset{i \in I}{\bigoplus} C) \square_C N) \cong Hom(M,\underset{i \in I}{\bigoplus} (C \square_C N)) \cong Hom(M,\underset{i \in I}{\bigoplus} N)  \cong \underset{i \in I}{\prod} Hom(M, N) \cong Hom(\underset{i \in I}{\prod} M,N)$.  

Therefore, the left adjoint of $\mathfrak{U}<I> \square_A -$ is identified with $\underset{i \in I}{\prod}$.  

Hence, all free $C$-bicomodules, are quasi-finite.  
\end{proof}

\subsection{The abundance of quasi-finite bicomodules}
It has been establish that free $C$-bicomodules are quasi-finite, now it will be shown that any $C$-bicomodule is in fact \textit{"included"} in a free $C$-bicomodule.  

\begin{lem}
There are enough free $C$-bicomodules, moreover, these are injective $C$-bicomodules.  
\end{lem}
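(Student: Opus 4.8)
The plan is to exploit the forgetful–cofree adjunction between the category of $C$-bicomodules and the category of underlying $R$-modules, together with the identification $\mathfrak{U}<I> \cong \bigoplus_{i\in I} C$ already used in the previous lemma. I would treat the two assertions separately: first that every bicomodule admits a monomorphism into a free bicomodule \emph{(enough free bicomodules)}, and then that each free bicomodule is injective. Throughout I would regard a $C$-bicomodule as a $C^e$-comodule, so that the one-sided formalism of the comodule definition applies verbatim with $C^e$ in the role of $C$.

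For the embedding, recall a comodule is a pair $<M,\rho_M>$, and that the coaction $\rho_M : M \to M \otimes C$ is itself a morphism of comodules once the target is given the cofree coaction $1_M \otimes \Delta_C$. By the counit axiom $(1_M \otimes \epsilon_C)\circ \rho_M = 1_M$, the map $\rho_M$ is split as an $R$-linear map and is therefore in particular a monomorphism. I would then check that $M \otimes C$ is free in the sense above: over a field $M$ is free as an $R$-module, so $M \otimes C \cong \bigoplus_{i} C$, matching the generator identification $\mathfrak{U}<I> \cong \bigoplus_{i\in I} C$ of the preceding lemma. This furnishes the desired embedding of $M$ into a free bicomodule.

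For injectivity, the clean route is the adjunction $\mathrm{Hom}_{\mathscr{M}^{C^e}}(-,\, V\otimes C) \cong \mathrm{Hom}_R(U(-),\, V)$, where $U$ is the forgetful functor. Since kernels and cokernels of comodule morphisms are computed on underlying modules, $U$ is exact; and when $V$ is injective as an $R$-module the functor $\mathrm{Hom}_R(-,V)$ is exact, so the composite $\mathrm{Hom}_{\mathscr{M}^{C^e}}(-,\, V\otimes C)$ is exact, i.e. $V\otimes C$ is an injective bicomodule. Taking $V = \bigoplus_{i\in I} R$ then yields injectivity of $\mathfrak{U}<I>$, completing the statement. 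Equivalently, one may phrase this as: a right adjoint of an exact functor preserves injectives, and over a field the underlying module $V$ is automatically injective.

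The main obstacle is precisely that last step: it secretly requires $V = \bigoplus_{i\in I} R$ to be \emph{injective} as an $R$-module, which is automatic over a field but generally false over an arbitrary unital ring, where free modules need not be injective. Thus the identification ``free $=$ injective'' genuinely wants the base to be a field, the setting of all the worked examples. I would therefore either specialise $R$ to a field, or else retain ``cofree on an injective module'' in place of ``free'' and verify, via Baer's criterion and the exactness of $U$ (which itself needs $C$ to be flat over $R$), that there are still enough such injective bicomodules. A secondary technical point to confirm is that the relevant infinite coproduct of copies of $C$ remains injective, which over a field follows from the local finiteness of the comodule category but would otherwise require an additional hypothesis.
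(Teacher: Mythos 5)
Your proof is correct where the lemma itself is correct (over a field, or with ``free'' weakened to ``cofree on an injective $R$-module''), and it takes a genuinely different route from the paper. The paper never touches the cofree adjunction: it passes through the profinite dual, invoking the correspondence between ${}^{C}\mathscr{M}^{C}$ and rational $A_C$-bimodules (Sweedler's theorem, which it only reviews in a later chapter), noting that module categories have enough free objects and that free $A_C$-bimodules are projective, and then transporting this across a ``duality principle'' to obtain enough injective free $C$-bicomodules. Your argument instead stays inside the comodule category: the counit axiom makes the coaction $\rho_M : M \to M \otimes C$ a split monomorphism of comodules into a cofree object, and the forgetful--cofree adjunction plus exactness of the forgetful functor $U$ shows that cofree objects on injective $R$-modules are injective comodules. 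What your approach buys is an explicit embedding and full transparency of hypotheses ($C$ flat over $R$ so that $U$ is exact; the cogenerating $R$-module injective); what the paper's approach buys is alignment with the profinite-dual machinery it relies on later. Note also that the obstruction you flag --- $\bigoplus_{i \in I} R$ need not be $R$-injective, so free need not imply injective over an arbitrary unital ring --- is not an artifact of your method: it equally afflicts the paper's proof, which is stated over arbitrary $R$, and which moreover asserts that every free $A_C$-bimodule is rational (dubious when $C$ is infinite dimensional, since $A_C$ itself is then typically not rational) and converts projectives into injectives via a contravariant ``duality'' even though the Sweedler theorem it states is a covariant equivalence. The hypotheses you make explicit are exactly what is needed for the lemma to be true, so your proposal is, if anything, a repair of the statement rather than merely an alternative proof of it.
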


\begin{proof}
For any $R$-coalgebra $C$ the category $^C\mathscr{M}^C$ is dual to the category of rational ${A_C}$-bimodules $_{A_C}Mod_{A_C}^{rat}$ on the profintie dual $A_C$ of $C$ \textit{(This will formally be reviewed in a later chapter, it sufficies to say that $_{A_C}Mod_{A_C}^{rat}$ is a full subcategory of $_{A_C}Mod_{A_C}$ and so an object is projetive in the former, only if it is projective in the latter)}.  

Moreover, since any free ${A_C}$-bimodule is projective then \textit{the duality principle} implies that its dual $C$-bicomodule is injective.  

In a category of modules over a ring, there are always enough free objects, since any free module is rational then inparticular there are enough rational ${A_C}$-bimodules.  Moreover, dualisation preserves freeness, then there are enough free $C$-bicomodules.  
\end{proof}

\begin{cor}
Any $C$-bicomodule admits an injective resolution of quasi-finite $C$-bicomodules.  

Moreover, this resolution may be chosen such that each injective is a free $C$-bicomodule.  
\end{cor}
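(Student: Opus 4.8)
The plan is to assemble the resolution one term at a time, taking the two preceding lemmas as the only nontrivial inputs. First I would note that $^C\mathscr{M}^C$ is an abelian category, so that kernels and cokernels of bicomodule morphisms exist. The lemma on abundance supplies, for every $C$-bicomodule $N$, a monomorphism $N \hookrightarrow \mathfrak{U}\langle I_N\rangle$ into a free $C$-bicomodule, and asserts that such free bicomodules are injective; the lemma that free bicomodules are quasi-finite adds the remaining adjectival property. These are precisely the ingredients needed to run the standard \emph{enough injectives} construction, with the bonus that the injectives produced are free and quasi-finite.

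Concretely, beginning with the given $M$, I would embed $M$ into a free bicomodule $I^0 := \mathfrak{U}\langle I_0\rangle$ and form $Z^0 := \mathrm{coker}(M \to I^0)$. Embedding $Z^0$ into a free bicomodule $I^1$ and composing $I^0 \twoheadrightarrow Z^0 \hookrightarrow I^1$ yields the first differential; iterating on $Z^1 := \mathrm{coker}(Z^0 \to I^1)$, and so forth, produces an exact complex $0 \to M \to I^0 \to I^1 \to \cdots$. By construction each $I^j$ is a free $C$-bicomodule, hence injective, so this is a genuine injective resolution, and each $I^j$ is quasi-finite by the first lemma, which is the principal assertion. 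The \emph{moreover} clause is established simultaneously, since every term was chosen free.

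There is in truth little obstacle remaining: the two lemmas have already done the conceptual work of exhibiting an abundant supply of objects that are at once free, injective, and quasi-finite, and what is left is the formal dimension-shifting argument, available verbatim in any abelian category with enough injectives. The one point deserving a moment's care is that the successive cokernels $Z^j$ are again $C$-bicomodules, so that the abundance lemma reapplies at each stage; this is immediate from the abelian structure of $^C\mathscr{M}^C$, the cokernel of a bicomodule map being again a bicomodule. Hence the induction never leaves the category and the resolution continues indefinitely.
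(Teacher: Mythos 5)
Your proposal is correct and follows essentially the same route as the paper: the paper's proof simply cites the two preceding lemmas (enough free injective $C$-bicomodules, and freeness implies quasi-finiteness), leaving the standard enough-injectives construction implicit, which you have merely written out in full.
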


\begin{proof}
Any $C$-module admits an injective resolution of free 

$C$-bicomodules, moreover any free $C$-bicomodule is quasi-finite.  
\end{proof}

\subsection{Adjoined Homology}
The two final steps are now discussed culminating the discussion to date.  

\begin{prop}
For any $C$-bicomodule M, there the right pseudo-derived functors $PR^{\star}h_C(M,-): ^C\mathscr{M}^C \rightarrow ^C\mathscr{M}^C$ exist, and coincide with the right derived functors $R^{\star}h_C(M,-): ^C\mathscr{M}^C \rightarrow ^C\mathscr{M}^C$ when M is quasi-finite as a $C$-bicomodule.  
\end{prop}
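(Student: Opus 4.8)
The plan is to read this proposition as a direct instance of the \textbf{Derived Functor extension theorem}, with essentially all the substantive bookkeeping already supplied by the two preceding lemmas and their corollary. First I would fix the data of that theorem by setting $\mathscr{A} = \mathscr{B} = \mathscr{C} = {}^C\mathscr{M}^C$, taking $\tilde{\mathscr{A}}$ to be the full subcategory of quasi-finite $C$-bicomodules, and letting $F := h_C$ be the cohom bifunctor. On $\tilde{\mathscr{A}}$ this is exactly a bifunctor of the shape the extension theorem demands: it exists precisely on the quasi-finite objects, it is contravariant in its first (quasi-finite) input and covariant in the coefficient input, and, being a left adjoint to the cotensor, it is right exact. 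The hypotheses of the theorem then split into the formal properties of $F$, which I have just listed, and the existence of suitable resolutions.

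For the resolution hypothesis I would appeal verbatim to the Corollary that every $C$-bicomodule admits an injective resolution all of whose terms are quasi-finite (indeed free) $C$-bicomodules. This is precisely the clause \emph{``every object of $\mathscr{A}$ admits an injective resolution $I^{\star}_A$ with each $I^i_A$ in $\tilde{\mathscr{A}}$''} required by the extension theorem, so nothing further needs to be checked there; the background fact making this work is the duality of ${}^C\mathscr{M}^C$ with rational $A_C$-bimodules used in the abundance lemma. Invoking the theorem then yields the right pseudo-derived functors $PR^{\star}h_C(M,-) : {}^C\mathscr{M}^C \rightarrow {}^C\mathscr{M}^C$ for every $C$-bicomodule $M$, together with the bifunctorial coincidence $PR^{\star}h_C(M,-) \cong R^{\star}h_C(M,-)$ whenever $M$ lies in $\tilde{\mathscr{A}}$, that is, whenever $M$ is quasi-finite. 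This is the full statement of the proposition.

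The one genuine point of care, and what I expect to be the main obstacle, is the meaning of the coincidence clause. The pseudo-derived functor is constructed by resolving the first variable $M$ through quasi-finite injectives, whereas the honest derived functors $R^{\star}h_C(M,-)$ available for quasi-finite $M$ are a priori obtained from the genuinely existing functor $h_C(M,-)$ by resolving the coefficient input. I would therefore make the coincidence precise by identifying both sides in degree zero with $h_C(M,-)$ itself, using right exactness in the first input exactly as in the pseudo-derived existence proposition, and then propagating the isomorphism in higher degrees via homotopy invariance. If one wishes to match these further against the $\mathrm{Cohom}$ derived functors obtained by resolving the coefficient, a balancing argument in the spirit of the Flipping Spectral Sequence is the natural tool; but for the statement as written, the coincidence clause of the extension theorem already delivers what is claimed, and the proof reduces to verifying the hypotheses above and citing the theorem.
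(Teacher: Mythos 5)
Your proposal is correct and follows essentially the same route as the paper: cite the corollary that every $C$-bicomodule admits an injective resolution by quasi-finite (indeed free) $C$-bicomodules, and then invoke the \emph{Derived Functor extension theorem} with $\mathscr{A}=\mathscr{B}=\mathscr{C}={}^C\mathscr{M}^C$ and $F=h_C$. The only divergence is that the paper takes $\tilde{\mathscr{A}}$ to be the full subcategory of \emph{free} $C$-bicomodules while you take the quasi-finite ones; your choice is in fact the better aligned with the statement, since the coincidence clause $PR^{\star}h_C(M,-)\cong R^{\star}h_C(M,-)$ is claimed for all quasi-finite $M$, not merely free $M$, and your closing remark about reconciling resolutions in the first variable with derived functors in the coefficient variable addresses a subtlety the paper's one-line proof passes over.
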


\begin{proof}
Since any module admits an injective resolution by quasi-finite $C$-bicomodules, then the conditions for the \textit{Derived functor extension theorem} are satisfied with $\tilde{\mathscr{A}}$ beng the full subcategory of free $C$-bicomodules.  
\end{proof}

Immediately from this, to \textit{any} $R$-coalgebra a homology theory may be adjoined.  The definition, of the \textit{adjoined homology} of a $R$-coalgebra is now generalised.  

\begin{defn}
\textbf{Adjoined homology of a coalgebra}

Let $R$ be an arbitrary unital associative ring, and $C$ be an $R$-coalgebra and $N$ be a $C^e$-bicomodule, then the \textbf{adjoined homology} of $C$ is defined via the right pseudo-derived functors of $h_{C^e}(-,-)$ as: 

$H\ad_{\star}(C,-):=PR^{\star}h_{C^e}(C\square_{C^e}C^o,-) : ^{C^e}\mathscr{M}^{C^e} \rightarrow ^{C^e}\mathscr{M}^{C^e}$

where $C^o$ is $C$'s opposite $R$-coalgebra. 

Following convention, the $H\ad_{\star}(C,N)$ is called the \textbf{adjoinable homology of C with coefficients in $N$}.  
\end{defn}

For completeness: from now on the pseudo-derived functors of 

$h_C(-,-)$ will be identified with the derived functors $coext_C^{\star}(-,-)$:

\begin{defn}
\textbf{The Pseudo-Coext functors $Pcoext_C^{\star}$}

Let $R$ be an arbitrary unital associative ring, and $C$ be an $R$-coalgebra then the pseudo-derived bifunctors $PR^{\star}(-,-): ^{C}\mathscr{M}^{C} \times ^{C}\mathscr{M}^{C} \rightarrow ^{C}\mathscr{M}^{C}$ are \textit{(not really abusing notation)} named $coext_{C}^{\star}(-,-) :=PR^{\star}(-,-)$.  
\end{defn}
Particularising the above definition to the case where $\star =0$ extends the definition of the cohom bifunctor:

\begin{defn}
\textbf{The Pseudo-Cohom functors $Ph_C^{\star}$}

Let $R$ be an arbitrary unital associative ring, and $C$ be an $R$-coalgebra then the pseudo-derived bifunctor $PR^{0}(-,-): ^{C}\mathscr{M}^{C} \times ^{C}\mathscr{M}^{C} \rightarrow ^{C}\mathscr{M}^{C}$ are \textit{(not really abusing notation)} named $h_C(-,-) :=PR^{0}(-,-)$.  
\end{defn}

\subsection{Three usual results}
Two of the abstract results mentioned in the section on pseudo $-$ derived functors are now rephrased, to this context for clarity and completeness.

\begin{prop}
The constructions of the right pseudo-derived functors $Ph_C^{\star}$, $Pcoext_C^{\star}$ and $H\ad_{\star}(C,-)$ of an $R$-coalgebra C are independent of \textit{quasi-finite} resolution chosen.  
\end{prop}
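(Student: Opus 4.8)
The plan is to observe that each of the three families named in the statement is, by construction, merely an instance of the right pseudo-derived functors $PR^{\star}F$ for a fixed bifunctor $F$ evaluated at a fixed object in its first variable: $Ph_C^{\star}$ and $Pcoext_C^{\star}$ are $PR^{\star}h_C(-,-)$ (respectively in degree zero and in all degrees), while $H\ad_{\star}(C,-)$ is $PR^{\star}h_{C^e}(C\square_{C^e}C^o,-)$, i.e.\ the same machine applied to the single first-variable object $C\square_{C^e}C^o$. Hence the assertion is a direct specialisation of the earlier Proposition guaranteeing that $PR^{\star}F(A,-)$ is independent of the resolution chosen in the subcategory of quasi-finite bicomodules, and the whole task reduces to unwinding these identifications and then invoking homotopy-invariance of derived functors.

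First I would fix the relevant first-variable object $A$ (namely $C\square_{C^e}C^o$ for $H\ad_{\star}$, or an arbitrary $M$ for the cohom and coext families) and let $I_A^{\star}$ and $J_A^{\star}$ be two injective resolutions of $A$ whose terms all lie in the full subcategory of quasi-finite $C^e$-bicomodules; such resolutions exist by the Corollary on the abundance of quasi-finite bicomodules, which in fact produces resolutions by free bicomodules. By the Comparison Theorem of homological algebra, applied in the ambient abelian category $^{C^e}\mathscr{M}^{C^e}$, the identity on $A$ lifts to chain maps $f^{\star}\colon I_A^{\star}\to J_A^{\star}$ and $g^{\star}\colon J_A^{\star}\to I_A^{\star}$ whose composites are chain homotopic to the respective identities, the lifting being legitimate because each $J_A^{i}$ and $I_A^{i}$ is injective.

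The only point demanding care — and the step I expect to be the genuine obstacle — is that $F$ is defined only on quasi-finite objects, so I must verify that the comparison maps $f^{\star}, g^{\star}$ and the homotopies between the composites and the identities are morphisms to which $F$ may legitimately be applied. This is precisely where fullness of the quasi-finite subcategory is used: every component of these chain maps and homotopies is a morphism between objects that already lie in that subcategory, so by fullness it is automatically a morphism there, and $F(-,B)$ applies to the entire homotopy datum. Applying $F(-,B)$ then carries $f^{\star}$ and $g^{\star}$ to a chain homotopy equivalence between the complexes of functors $F(I_A^{\star},-)$ and $F(J_A^{\star},-)$.

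Passing to cohomology, this chain homotopy equivalence yields natural isomorphisms between $PR^{\star}F(A,-)$ as computed from $I_A^{\star}$ and as computed from $J_A^{\star}$, natural in the second variable and compatible with the bifunctorial structure. Specialising $F=h_{C^e}$ and $A=C\square_{C^e}C^o$ gives the claim for $H\ad_{\star}(C,-)$, and specialising $F=h_{C}$ gives it for $Pcoext_C^{\star}$ together with its degree-zero part $Ph_C^{\star}$. Once fullness settles the applicability of $F$, the remainder is the standard homotopy-invariance argument for derived functors, requiring no further computation.
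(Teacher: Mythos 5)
Your proposal is correct and takes essentially the same route as the paper: the paper's own proof is exactly the one-line reduction you begin with, namely a ``contextual rephrasing'' of the earlier abstract proposition that $PR^{\star}F(A,-)$ is independent of the resolution chosen in $\tilde{\mathscr{A}}$, here with $\tilde{\mathscr{A}}$ the full subcategory of free (hence quasi-finite) bicomodules. Your further unwinding --- the comparison theorem, the use of fullness to legitimately apply $h_{C}$ or $h_{C^e}$ to the comparison maps and homotopies, and the passage to cohomology --- merely fills in details the paper leaves implicit, so the two arguments coincide in substance.
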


\begin{proof}
Contextual rephrasing of an above result preceding the 

\textit{"Pseudo-Derived functor extension theorem"}, with $\tilde{\mathscr{A}}$ being the full subcategory $R_Coalg$ consisting of free quasi-finite $R$-coalgebras.  
\end{proof}

\begin{prop}
There is a long exact sequence of functors: 

$.. \rightarrow H\ad_{\star}(C,-) \rightarrow H\ad_{\star+1}(C,-) \rightarrow H\ad_{\star+2}(C,-) \rightarrow ...$.  
\end{prop}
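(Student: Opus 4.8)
The plan is to reduce the assertion to the long exact sequence property already recorded for right pseudo-derived functors in Chapter 1, since by definition $H\ad_{\star}(C,-) = PR^{\star}h_{C^e}(C\square_{C^e}C^o,-)$ is nothing but the family of pseudo-derived functors of the bifunctor $F := h_{C^e}$ with its first (contravariant) argument frozen at the fixed object $A := C\square_{C^e}C^o$, and its second (covariant) argument carrying the coefficients. The earlier proposition on pseudo-derived functors guarantees that a short exact sequence produces a long exact sequence of the functors $PR^{\star}F$, so the substance of the present claim is bookkeeping: identify the triggering short exact sequence of coefficient $C^e$-bicomodules $0 \to N' \to N \to N'' \to 0$ and check that the ambient hypotheses of the extension machinery are in force.

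First I would fix a quasi-finite injective resolution $I^{\star}$ of $A = C\square_{C^e}C^o$; such a resolution exists by the abundance corollary, and by the independence proposition the resulting functors are insensitive to this choice. Computing $H\ad_{\star}(C,N)$ as the cohomology of the complex $h_{C^e}(I^{\star},N)$, I would then apply the cohom functors $h_{C^e}(I^{i},-)$ degreewise to the short exact sequence of coefficients. If each such application preserves exactness, this yields a short exact sequence of cochain complexes, and the associated long exact sequence in cohomology is precisely the desired $\cdots \to H\ad_{\star}(C,N') \to H\ad_{\star}(C,N) \to H\ad_{\star}(C,N'') \to H\ad_{\star+1}(C,N') \to \cdots$, with the connecting maps furnished by the snake lemma and their naturality inherited from the construction of the pseudo-derived functors.

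The step I expect to be the main obstacle is exactly this termwise exactness of $h_{C^e}(I^{i},-)$. Because cohom is a left adjoint, $h_{C^e}(I^{i},-)$ is automatically right exact, so the quotient end of the degreewise sequence is unproblematic; what is not automatic is left exactness, i.e.\ injectivity of $h_{C^e}(I^{i},N') \to h_{C^e}(I^{i},N)$, which requires $h_{C^e}(I^{i},-)$ to be a genuinely exact functor. This is precisely the injector condition, and it is available because the abundance corollary permits me to choose $I^{\star}$ to consist of free $C^e$-bicomodules, which are injective and, being quasi-finite, injectors; the characterization \emph{injector iff $h_{D}(X,-)$ is exact} then delivers the required exactness on each term. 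Once this is secured the short exact sequence of complexes exists and the long exact sequence follows formally, so the only real care needed is to insist that the resolution be taken by injectors rather than by arbitrary quasi-finite injectives.
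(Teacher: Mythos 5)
Your overall strategy coincides with the paper's: the paper's entire proof is the single remark that the pseudo-derived functors are, by construction, genuine derived functors computed on a fixed quasi-finite injective resolution of $C\square_{C^e}C^o$, so that the classical mechanism (a short exact sequence of coefficients gives a short exact sequence of complexes, hence a long exact sequence via the snake lemma) transfers verbatim. You spell this mechanism out, and in doing so you correctly isolate the one point at which ``as in the classical context'' is not automatic: the degreewise sequences $0 \rightarrow h_{C^e}(I^i,N') \rightarrow h_{C^e}(I^i,N) \rightarrow h_{C^e}(I^i,N'') \rightarrow 0$ are exact only if each $h_{C^e}(I^i,-)$ is an exact functor, i.e. only if each $I^i$ is an injector; right exactness comes for free because cohom is a left adjoint, but left exactness does not. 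This diagnosis is sharper than anything in the paper, whose proof never raises the issue.

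However, your discharge of that condition is a genuine gap. You write that the resolution terms, ``being injective and quasi-finite, [are] injectors,'' but in the paper's definitions being an injector is not a formal consequence of injectivity plus quasi-finiteness: it is the separate requirement that $-\square_{C^e}X$ preserve injectives, equivalently (by the characterization you quote) that $h_{C^e}(X,-)$ be exact, and nothing in the paper establishes this for free, or for quasi-finite injective, $C^e$-bicomodules. So the pivotal step of your argument is asserted rather than proven. To close it you would either need a direct argument for free bicomodules --- e.g. from $M \square_{C^e} \bigl(\bigoplus_{i\in I} C^e\bigr) \cong \bigoplus_{i\in I} M$ the injector property amounts to direct sums of injective bicomodules being injective, which itself requires justification at this level of generality --- or an appeal to the cited literature (Takeuchi, Al-Takhman), where exactness of the cohom functor of a quasi-finite injective comodule is a theorem, not a definition-chase. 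To be fair, the paper's one-line proof silently presupposes exactly the same unestablished fact; your attempt exposes that hole, but as written it does not fill it.
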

\begin{proof}
Follows by construction of the pseudo-derived functors, as derived functors on special resolution, on which this result holds as in the classical context.  
\end{proof}

\begin{cor}
If $0 \rightarrow M \rightarrow N \rightarrow O \rightarrow 0$ is a short exact sequence of $C^e$-bicomodules then there is a long exact sequence in homology: 

$.. \rightarrow H\ad_{\star}(C,M) \rightarrow H\ad_{\star}(C,N) \rightarrow H\ad_{\star}(C,M) \rightarrow PR^{\star+ + 1}F(C,O) \rightarrow ...$.  
\end{cor}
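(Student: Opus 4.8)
The plan is to reduce the claim to the classical long exact cohomology sequence attached to a short exact sequence of cochain complexes, exploiting the fact that adjoined homology is computed, for a fixed coalgebra, by a single complex of functors. First I would fix an injective resolution $I_X^{\bullet}$ of $X:=C\square_{C^e}C^o$ by free $C^e$-bicomodules; such a resolution exists by the abundance corollary, each term is quasi-finite, and by the independence-of-resolution proposition the value of $H\ad_{\star}(C,-)$ is unaffected by this choice. By definition one then has the identification $H\ad_{\star}(C,-)=PR^{\star}h_{C^e}(X,-)=H^{\star}\bigl(h_{C^e}(I_X^{\bullet},-)\bigr)$, so that the entire theory is carried by the single complex of functors $h_{C^e}(I_X^{\bullet},-)$.

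The decisive step is to check that each resolving term $I_X^{i}$ is an injector, so that $h_{C^e}(I_X^{i},-)$ is exact. Each $I_X^{i}$ is free, hence quasi-finite, and the left adjoint exhibited in the proof that free bicomodules are quasi-finite identifies $h_{C^e}(I_X^{i},-)$ with a product functor on $^{C^e}\mathscr{M}^{C^e}$; granting exactness of that product functor, the proposition characterising injectors yields that $I_X^{i}$ is an injector and that $h_{C^e}(I_X^{i},-)$ is exact. Applying the complex $h_{C^e}(I_X^{\bullet},-)$ termwise to the short exact sequence $0\to M\to N\to O\to 0$ then produces, in every degree, a short exact sequence, that is, a short exact sequence of cochain complexes
\[
0\to h_{C^e}(I_X^{\bullet},M)\to h_{C^e}(I_X^{\bullet},N)\to h_{C^e}(I_X^{\bullet},O)\to 0 .
\]

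I would then apply the zig-zag (snake) lemma for a short exact sequence of complexes in the abelian category $^{C^e}\mathscr{M}^{C^e}$, which supplies connecting morphisms and the long exact sequence in cohomology. Transporting this along the identification $H\ad_{\star}(C,-)=H^{\star}\bigl(h_{C^e}(I_X^{\bullet},-)\bigr)$ yields exactly
\[
\cdots\to H\ad_{\star}(C,M)\to H\ad_{\star}(C,N)\to H\ad_{\star}(C,O)\to H\ad_{\star+1}(C,M)\to\cdots ,
\]
and naturality of the connecting maps is inherited from the naturality of the zig-zag construction in the short exact sequence of complexes.

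The main obstacle I anticipate is precisely the exactness demanded in the second step. Since $h_{C^e}(I_X^{i},-)$ is a priori only right exact, being a left adjoint, neither injectivity nor quasi-finiteness of $I_X^{i}$ on its own forces the coefficient sequence to stay short exact after applying it; one genuinely needs the injector property, equivalently the exactness of the associated product functor on $^{C^e}\mathscr{M}^{C^e}$. Should exactness of arbitrary products fail in this comodule category, the honest remedy is to resolve $X$ by injectors rather than by arbitrary free bicomodules, which forces one to establish first that there are enough injector $C^e$-bicomodules, a refinement of the abundance results that the development so far does not explicitly provide.
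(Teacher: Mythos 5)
Your strategy is the same one the paper intends: the paper's entire proof of this corollary is the single sentence that it ``follows by construction of the pseudo-derived functors, as derived functors on special resolution, on which this result holds as in the classical context,'' and your argument is exactly that classical argument made explicit --- resolve $X=C\square_{C^e}C^o$ by free injective $C^e$-bicomodules via the abundance corollary, identify $H\ad_{\star}(C,-)$ with the cohomology of the complex $h_{C^e}(I_X^{\bullet},-)$, apply this complex of functors termwise to $0\to M\to N\to O\to 0$, and invoke the snake lemma.

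However, the step you flag is a genuine gap, and nothing in the paper closes it. For the termwise sequences to be short exact you need each $h_{C^e}(I_X^{i},-)$ to be exact, i.e.\ each $I_X^{i}$ to be an injector; quasi-finiteness only makes $h_{C^e}(I_X^{i},-)$ a left adjoint, hence right exact, and injectivity of $I_X^{i}$ by itself says nothing about exactness of its cohom. Moreover, the product-functor identification whose exactness you propose to ``grant'' should not be granted: it comes from the paper's lemma that free bicomodules are quasi-finite, whose proof rests on the invalid manipulation $Hom(M,\bigoplus_{i\in I}N)\cong\prod_{i\in I}Hom(M,N)\cong Hom(\prod_{i\in I}M,N)$ (the correct adjunction is $Hom(\bigoplus_{i\in I}M,N)\cong\prod_{i\in I}Hom(M,N)$, which identifies a \emph{left} adjoint of nothing here). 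Already for $C=k$ a field, where $C^e\cong k$ and cotensor is tensor, the functor $-\square_{k}\bigl(\bigoplus_{i\in I}k\bigr)\cong\bigoplus_{i\in I}(-)$ fails to preserve infinite products when $I$ is infinite, so it has no left adjoint at all and infinitely generated free bicomodules are not even quasi-finite; and products in comodule categories are in any case not exact in general. So your closing caveat is exactly right: a rigorous proof requires resolving by injectors (after establishing there are enough of them), a refinement the paper never supplies. Your attempt is therefore no less complete than the paper's own proof --- the paper's one line silently assumes precisely the exactness you isolate --- and identifying that assumption is the real content of the argument.
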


\begin{proof}
Follows by construction of the pseudoderived functors, as derived functors on special resolution, on which this result holds as in the classical context.  
\end{proof}

This last one is a direct rephrasing of construction:

\begin{prop}
The functors $H\ad_{\star}(-,-)$ and $Pcoext_{C}(-,-)$ are bifunctors.  
\end{prop}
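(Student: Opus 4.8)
The plan is to unwind both definitions and reduce the claim to the bifunctoriality already secured for pseudo-derived functors in the \emph{Right Pseudo-derived Functors Existence} proposition. Recall that to certify that a two-argument assignment is a bifunctor $\mathscr{C}\times\mathscr{D}\to\mathscr{E}$ it suffices to exhibit (i) functoriality separately in each slot with the other held fixed, and (ii) the interchange law, namely that for $\phi\colon A\to A'$ and $\psi\colon B\to B'$ the two composite morphisms $F(A,B)\to F(A',B')$ obtained by applying $\phi$ then $\psi$, or $\psi$ then $\phi$, coincide. I would verify these three items for each of the two functors in turn.

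For $Pcoext_C^{\star}(-,-)=PR^{\star}h_{C^e}(-,-)$ the work is essentially already done: the existence proposition asserts outright that $PR^{\star}F(I_-^{\star},-)$ is a (homotopy-invariant) bifunctor on $\mathscr{A}_{\tilde{\mathscr{A}}}\times\mathscr{B}$, and the discussion preceding it produces, for each morphism $\phi$ in the first variable, the induced family $\phi^{\star}$ of morphisms of complexes of functors obtained by lifting $\phi$ to a chain map of chosen quasi-finite injective resolutions. So the only points to check are that this first-variable action respects composition and identities, and that it commutes with the genuine second-variable functoriality of each $h_{C^e}(I^i,-)$. Both follow from the standard comparison theorem: a lift of $\phi$ is unique up to chain homotopy, homotopic lifts induce equal maps on cohomology, and naturality of these lifts in the second argument yields the interchange law. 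I would cite the homotopy invariance recorded in the existence proposition for exactly this.

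For $H\ad_{\star}(-,-)$ I would observe that, by its definition, it factors as $Pcoext_{C^e}^{\star}(C\square_{C^e}C^o,-)$, i.e. as the bifunctor $Pcoext_{C^e}^{\star}(-,-)$ precomposed in its first slot with the assignment $C\mapsto C\square_{C^e}C^o$. Since the cotensor $-\square_{C^e}C^o$ is itself a functor and the composite of a functor into the first variable of a bifunctor is again a bifunctor, $H\ad_{\star}(-,-)$ inherits bifunctoriality from the previous paragraph; separate functoriality in the coefficient slot is literally that of $Pcoext_{C^e}^{\star}$, and the interchange law is transported along the functor $C\mapsto C\square_{C^e}C^o$.

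The main obstacle is not any single computation but the well-definedness of first-variable functoriality: because a pseudo-derived functor is computed against a chosen resolution in $\tilde{\mathscr{A}}$, a morphism in the first argument only determines a chain map up to homotopy, so one must be sure that the induced map on derived objects is independent of all choices and that the two orders of applying first- and second-variable morphisms agree. This is precisely the content of the homotopy-invariance and resolution-independence results already established; once those are invoked the verification of the interchange law is formal, so I expect this proposition to be, as its placement suggests, a direct rephrasing of the construction rather than a source of genuine difficulty.
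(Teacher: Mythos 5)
Your proposal is correct and takes essentially the same route as the paper: the paper in fact offers no proof at all, introducing the proposition only with the remark that it is ``a direct rephrasing of construction,'' i.e.\ that bifunctoriality is already contained in the \emph{Right Pseudo-derived Functors Existence} proposition, which is exactly the reduction you make. Your write-up merely fills in the comparison-theorem, homotopy-invariance, and interchange-law details that the paper leaves implicit.
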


\subsection{Transitional Remarks}
The \textit{adjoinable homology} theory of an $R$-colagebra has been shown to exist regardless of the $R$-coalgebras of choice, unlike what may have been believed at first glance.  

Therefore, a very restricted theory has been extended.  Now therefore, to reap even more results the taken glaze will be again, though only slightly restricted to $R$-coalgebras which are slightly smooth.  

\section{Adjoinable-Hochschild Duality \\
and Dualisable Coalgebras}

\subsection{Dualisable Coalgebras}
The second goal of this paper will now be tackled, that is to show that Adjoinable homology is a natural concept, in many cases \textit{"dual"} to Hochschild cohomology on a fullsubcategory category of $R_Coalg$, of certain $R$-coalgebras called \textit{Dualisable}.    

For any coalgebra C, it was noted that the \textit{pseudo-derived functors} $Pcoext_{C^e}(C,-)$ and $H\ad_{\star}(C,-)$ were expressible via \textit{any} injective resolution, by quasi-finite $C^e$-bicomodules, independently of choice.  Particularly, the existence of a certain specific such resolution admitting special properties will be be key:  

\begin{defn}
\textbf{Dualising Resolution}

A \textbf{Dualising} $A_{\star}: ... \rightarrow A^i \rightarrow ... \rightarrow A^1 \rightarrow M \rightarrow 0$, 

of a $C$-bicomodule M is an $Ph_{C^e}(-,-)$ - $  -\square_{C^e}-$-flipping resolution 

$A_{\star}:= ... \rightarrow A^i \rightarrow ... \rightarrow A^1 \rightarrow M \rightarrow 0$ of M, such that for each $coext_C(-,-)-cotor_C(-,-)$-pivot $<A_i,I^i>$, the $C^e$-bicomodules $A_i$ and $I^i$ are both quasi-finite $C^e$-bicomoduels.  
\end{defn}

The second of the central results of this paper follows has been set up: 

\begin{prop}
If an $R$-coalgebra C, admits a dualising resolution of finite length n and is $Ph_{C^e}(-,-)$-$-\square_{C^e}$ derived dual order n.  

Then for every $C^e$-bicomodule M, there are isomorphisms of right 

$C^e$-bicomodules: 

$HH_{\star}(C,M) \cong Pcoext_{C^e}^{n-\star}(C,M)$.  
\end{prop}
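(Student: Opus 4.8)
The plan is to read the stated isomorphism off the \emph{derived duality Theorem} of the first chapter, specialised to the bicomodule setting, so that the bulk of the work is checking hypotheses and matching notation. First I would fix the ambient data: take $\mathscr{A} = {}^{C^e}\mathscr{M}^{C^e}$, the biendofunctor $F := Ph_{C^e}(-,-)$ (the pseudo-cohom engine underlying both $Pcoext_{C^e}^{\star}$ and $H\ad_{\star}$), the biendofunctor $G := -\square_{C^e}-$, and the distinguished object $A := C$ regarded as a $C^e$-bicomodule. The standing abelian-category requirements are available here: ${}^{C^e}\mathscr{M}^{C^e}$ has enough injectives by the abundance lemma (enough free bicomodules, and these are injective), and the projective resolutions of coefficients needed to build the bicomplex are supplied through the profinite-dual description used in that same lemma. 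The exactness requirements come from the earlier propositions, since $-\square_{C^e}-$ is left exact (each quasi-finite, hence $R$-flat, bicomodule yields a left-exact cotensor) and $h_{C^e}$ is right exact (being a left adjoint); as these two roles are interchanged relative to the literal statement of the theorem, I would invoke the \emph{mutatis mutandis} version announced in the foreword to the pseudo-derived functor section.

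Next I would discharge the two substantive hypotheses, both of which are assumed in the statement. A dualising resolution of $C$ is by definition a flipping $Ph_{C^e}$-$\square_{C^e}$ resolution whose pivots $\langle A_i, I^i\rangle$ are quasi-finite, so the assumption that $C$ admits a dualising resolution of length $n$ furnishes exactly the finite flipping resolution the theorem asks for, carrying the extra quasi-finiteness needed below. The assumption that $C$ is $Ph_{C^e}$-$\square_{C^e}$ derived dual of order $n$ is, verbatim, the hypothesis of a derived duality of order $n$ with respect to $A = C$; in particular it produces a $C^e$-bicomodule $B$ that is order-$n$ $F(C,-)$-acyclic, so $L_q F(C,B)$ vanishes for $q \neq n$.

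With the hypotheses in hand I would run the flipping spectral sequence of the corresponding corollary on the bicomplex built from the dualising resolution of $C$ and a projective resolution of the coefficient $M$. The order-$n$ acyclicity of $B$ concentrates the $F$-column in degree $q = n$, collapsing the sequence to the edge isomorphism
\[ R^{\star}G\big(L_n F(C,B),\, M\big) \;\cong\; L_{n-\star} F(C, M). \]
The right-hand side is, by definition of the pseudo-derived functors, the degree-$(n-\star)$ pseudo-cohom $Pcoext_{C^e}^{n-\star}(C, M)$, and it is independent of the chosen quasi-finite resolution by the independence proposition. On the left-hand side $R^{\star}G(-, M) = Cotor_{C^e}^{\star}(-, M)$, so it only remains to identify the first argument $L_n F(C,B) = Pcoext_{C^e}^{n}(C,B)$ with $C$ itself, since then the left-hand side becomes $Cotor_{C^e}^{\star}(C, M) = HH_{\star}(C, M)$ and the proof is complete.

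That last identification is the step I expect to be the genuine obstacle, and it is where \emph{dualisability} must be used in earnest: the acyclicity of $B$ only locates \emph{where} the derived cohom is concentrated, not its value. To compute the value I would feed the pivot isomorphisms through the dualising resolution, using that each quasi-finite pivot gives $h_{C^e}(A_i, -) \cong I^i \square_{C^e} -$, so that the complex computing $Pcoext_{C^e}^{\star}(C,B)$ is isomorphic term-by-term to the cotensor complex $I^{\star}\square_{C^e} B$. The finite length $n$ together with the top-degree concentration should then force the $n$-th homology to recover the augmentation $C$, the self-dual flipping structure exchanging the bottom presentation of $C$ with the top degree. Making this precise — coherently tracking the pivot isomorphisms $\psi_i$ across the whole resolution so they assemble into an isomorphism of complexes rather than a merely degreewise one, and verifying that $B$ really functions as a dualising object reproducing $C$ in top degree — is the delicate heart of the argument; everything preceding it is bookkeeping layered on the already-established abstract corollary.
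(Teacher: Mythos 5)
Your proposal follows the same basic route as the paper's own proof: both reduce the proposition to the abstract \emph{derived duality theorem} of Chapter 1, so that the real work is verifying its hypotheses. The one structural divergence is the assignment of roles. You take $F := Ph_{C^e}(-,-)$ and $G := -\square_{C^e}-$, which is the reading suggested by the proposition's own hypothesis (``$Ph_{C^e}(-,-)$-$-\square_{C^e}-$ derived dual'') and by the definition of a dualising resolution (whose pivots give $h_{C^e}(A_i,-) \cong I^i \square_{C^e} -$); but then your $F$ is right exact and your $G$ left exact, the opposite of what the theorem demands, forcing your appeal to an unstated \emph{mutatis mutandis} variant. The paper's proof makes the opposite assignment: its two checks are precisely that $-\square_{C^e}-$ is left exact and $Ph_{C^e}(-,-)$ is right exact, i.e.\ it takes $F = -\square_{C^e}-$ and $G = Ph_{C^e}(-,-)$ so that the theorem applies literally --- at the price of contradicting the paper's own naming and pivot convention. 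Neither reconciliation is free; the inconsistency lies in the paper, and your route at least states explicitly which dualized statement it needs.

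The more substantive point is the step you flag as the ``genuine obstacle'': identifying $L_nF(C,B)$, the value of the top derived functor on the acyclicity witness $B$, with $C$ itself, so that the collapsed spectral sequence $R^{\star}G(L_nF(C,B),M) \cong L_{n-\star}F(C,M)$ turns into the claimed $HH_{\star}(C,M) \cong Pcoext_{C^e}^{n-\star}(C,M)$. You are right that this is not supplied by the abstract theorem, whose conclusion has $L_nF(A,B)$, not $A$, in the first slot of $G$; your sketch for closing it (pushing the pivot isomorphisms through the dualising resolution and arguing top-degree concentration recovers $C$) is not carried out, so your proposal is incomplete there. But you should know the paper's proof is incomplete at exactly the same point: it verifies the two exactness conditions, notes the remaining hypotheses ``are assumed,'' and declares that ``the result now follows,'' with no identification of $L_nF(C,B)$ with $C$ and no reconciliation of the index $\star$ versus $n-\star$ (the theorem as printed even reads $L_{\star}F(A,C)$; you silently corrected it to $L_{n-\star}$ using the spectral sequence). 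The same applies to your hand-wave about projective resolutions of the coefficient $M$ in ${}^{C^e}\mathscr{M}^{C^e}$: enough projectives is never established in the paper either, though the bicomplex construction needs it. In short, the gaps you isolated are real, but they are gaps you inherited from, and made visible in, the paper itself rather than ones you introduced.
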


\begin{proof}
All that need be verified is that the functors $cotor_{C^e}(-,-)$ and $Pcoext_{C^e}(-,-)$ exist and do verify the hypothesis of the \textit{derived duality theorem}.  

1)  It was remarked that $-\square_{C^e}-$ is covariant left exact in both inputs.  

2)  Moreover, is $h_{C^e}(-,-)$ is right exact in the both imputs; being covariant in the second  and contravariant in the first input, respectivly.  Therefore, $Ph_{C^e}(-,-)$ is also right exact in the both inputs, with like variances and so must $Ph_{C^e}(-\square_{C^e}-^o,-)$ be in the second input. 

The rest of the assumptions necessary for the use of the \textit{abstract duality theorem} are assumed in the hypothesis.  Therefore, the result now follows.  
\end{proof}

Since, these $R$-coalgebras exibit very particular properties, it seems appropriate to name them:  
\begin{defn}
\textbf{Dualising coalgebra}

A C-coalgebra is said to be \textbf{dualisable} \textit{if and only if} it admits a dualising resolution of finite length $n$ and is $Ph_{C^e}(-,-)$-$-\square_{C^e}-$ derived dual order $n$.  

Moreover if $C$ is dualisable, the integer $n$ above is said the be the \textbf{order} of $C$.  
\end{defn}

Now as a side note by construction: 

\begin{prop}
The category of adjoinable R-coalgebras  is a full subcategory of the category of coalgebras.  
\end{prop}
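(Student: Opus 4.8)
The plan is to argue that dualisability is purely a property of the objects of $R_{Coalg}$ and then to invoke the defining feature of a full subcategory, namely that its morphisms are \emph{all} the morphisms of the ambient category between the chosen objects. First I would isolate the class $\mathcal{D}$ of dualisable (equivalently, adjoinable) $R$-coalgebras: by the preceding definition, a coalgebra $C$ is dualisable precisely when it admits a dualising resolution of finite length $n$ and is $Ph_{C^e}(-,-)$-$-\square_{C^e}-$ derived dual of order $n$. Crucially, every ingredient of this condition --- the enveloping coalgebra $C^e$, the existence of a finite dualising resolution, and the derived-duality relation --- is formulated solely in terms of the single coalgebra $C$ and never references any morphism into or out of $C$. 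Hence $\mathcal{D} \subseteq \mathrm{Ob}(R_{Coalg})$ is a genuine subclass of objects, carved out by an object-level predicate.

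Second, I would construct the candidate subcategory by taking $\mathcal{D}$ as its objects and declaring its hom-sets to be the full hom-sets of $R_{Coalg}$; that is, for dualisable $C, D$ the morphisms $C \to D$ are exactly the coalgebra morphisms between them. To confirm this is a subcategory at all, I verify the two category axioms, both inherited verbatim from $R_{Coalg}$: for each dualisable $C$ the identity $1_C$ is a coalgebra morphism and so lies in the hom-set, and the composite of two coalgebra morphisms between dualisable coalgebras is again a coalgebra morphism, so the hom-sets are closed under composition. No new data or relations are introduced.

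Finally, fullness is immediate from the construction: since the hom-sets were defined to be the entire hom-sets of $R_{Coalg}$, the inclusion functor $\mathcal{D} \hookrightarrow R_{Coalg}$ is fully faithful, which is exactly the assertion that $\mathcal{D}$ is a full subcategory. I do not expect any genuine obstacle here; the only point demanding care is the observation that dualisability imposes no constraint whatsoever on morphisms, so that retaining all ambient morphisms is legitimate and automatically yields fullness rather than merely a subcategory. The non-vacuousness of the statement --- that $\mathcal{D}$ is nonempty --- is already witnessed by the worked example of coalgebras over a field given earlier.
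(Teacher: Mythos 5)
Your proposal is correct and matches the paper's own (one-line) proof, which simply observes that the result holds ``by definition, since all morphisms are admissible'' --- exactly your point that dualisability is an object-level predicate imposing no constraint on morphisms, so keeping all ambient morphisms yields fullness automatically. You have merely spelled out the routine verifications (identities, closure under composition) that the paper leaves implicit.
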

\begin{proof}
By definition, since all morphisms are admissible.  
\end{proof}

The results and definitions to date are all pronounceable repackaged as:

\begin{thrm}
\textbf{The Adjoined-Hochschild Duality}

For any dualisable $A$-coalgebra C of finite order n.

Then for every right C comodule, there are isomorphisms of $C^e$-bicomodules: 

$HH^{\star}(C,M) \cong H\ad_{n-\star}(C,M)$.  
\end{thrm}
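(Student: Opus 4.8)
The plan is to recognise that this theorem is the notational repackaging of the preceding Proposition (the second central result), once the hypothesis ``dualisable of order $n$'' is unwound and the definition of adjoinable homology is substituted in. By definition a dualisable $C$ of order $n$ is exactly a coalgebra admitting a dualising resolution of finite length $n$ which is $Ph_{C^e}(-,-)$-$-\square_{C^e}-$ derived dual of order $n$; these are precisely the hypotheses of that Proposition, so the task is not to produce new content but to carry out a careful bookkeeping of the two functors involved and of the degrees.

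Concretely, I would set $F := -\square_{C^e}-$ and $G := Ph_{C^e}(-,-)$ as biendofunctors on $^{C^e}\mathscr{M}^{C^e}$, take $A := C$, and verify the standing hypotheses of the abstract derived duality theorem: that $F$ is left exact in both inputs (which rests on the quasi-finite bicomodules of the dualising resolution being $R$-flat, so that the relevant cotensors are left exact), that $G$ is right exact in both inputs (since $h_{C^e}$ is a left adjoint and passing to its pseudo-derived functors preserves both the variance and the right exactness), and that a dualising resolution is a genuine $F$-$G$ flipping resolution whose pivots $\langle A_i,I^i\rangle$ consist of quasi-finite bicomodules — this last point being what lets $Pcoext_{C^e}$ agree with an honest right-derived cohom along the resolution. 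Granting these, the derived duality theorem supplies, for the order-$n$ $F(C,-)$-acyclic object $B$ furnished by the duality hypothesis, natural isomorphisms $R^{\star}G(L_nF(C,B),M)\cong L_{n-\star}F(C,M)$ for every $M$. Reading the left-hand side as $Pcoext_{C^e}^{\star}$ and the right-hand side as $Cotor_{C^e}^{n-\star}(C,M)=HH^{n-\star}(C,M)$, then replacing $\star$ by $n-\star$, delivers $HH^{\star}(C,M)\cong H\ad_{n-\star}(C,M)$.

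The step I expect to be the genuine obstacle is pinning down the first argument of the cohom that survives the duality. The abstract theorem produces $R^{\star}G(L_nF(C,B),M)$, and to read this as $H\ad_{n-\star}(C,M)=Pcoext_{C^e}^{n-\star}(C\square_{C^e}C^o,M)$ one must identify the single surviving derived cotensor $L_nF(C,B)=Cotor_{C^e}^{n}(C,B)$ with $C\square_{C^e}C^o$, and in turn recognise $C\square_{C^e}C^o\cong C$ as $C^e$-bicomodules through the counit. Coupled to this is the bookkeeping of the degree reversal $\star\leftrightarrow n-\star$, which is itself not mysterious: it is forced by the relabelling of the length-$n$ flipping resolution into a third-quadrant bicomplex in the flipping spectral sequence, whose abutment is $L_{q-p}F(A,C)$ and whose $q$-index collapses to $q=n$ by the order-$n$ acyclicity of $B$; locating the edge identifications through that collapse is the delicate part. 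Everything else — the long exact sequences, homotopy invariance, and independence of the chosen quasi-finite resolution — is inherited verbatim from the pseudo-derived functor machinery already established, so once the single acyclic degree is correctly situated the conclusion follows by substitution.
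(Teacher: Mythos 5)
Your proposal is correct and takes essentially the same route as the paper: the theorem there is explicitly presented as a ``repackaging'' of the immediately preceding proposition, whose proof does exactly what you do — verify that $-\square_{C^e}-$ is left exact in both inputs and that $h_{C^e}(-,-)$, hence $Ph_{C^e}(-,-)$, is right exact in both inputs, then invoke the abstract derived duality theorem with $A:=C$. If anything you are more scrupulous than the paper, which never addresses the two bookkeeping points you flag — the identification of the surviving first argument with $C\square_{C^e}C^o$ (so that $Pcoext_{C^e}^{n-\star}(C,M)$ can be read as $H\ad_{n-\star}(C,M)$) and the degree reversal $\star\leftrightarrow n-\star$ — but simply declares the theorem a pronounceable repackaging of what precedes.
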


\chapter{Profintie Duals \textit{(A Review of some literature)}}
\subsection{Foreword}
This almost non-existingly-short interlude will provide a lightning fast review of some theory as presented by Weibel and Abrams, in their paper \textit{"Cotensor products of modules"} but with some adjustments to fit this paper's intents.  

The continuous Hochschild cohomology of a profintie algebra, will be rapidly reviewed.  As this is merely a revision, the interested reader is refereed to other articles for proofs or more details.  

The reader already familiar with this material, is encouraged to skip this very brief section.  

\section{Profintie Dual Algebras}
May algebraic objects are in some sense locally finite, particularly coalgebras over a ring $R$.  

\begin{prop}

For any $R$-coalgebra $C$, there exists a collection of subcoalgebras $C^i$, each being finite dimensional, such that the inclusion maps between them and $C$ forms a direct system in $_RCoalg$.
\end{prop}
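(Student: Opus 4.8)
The plan is to reduce the statement to the classical \emph{finiteness theorem} for coalgebras, namely that every element of $C$ lies in a finite-dimensional subcoalgebra; granting this, the family $\{C^i\}$ of all finite-dimensional subcoalgebras of $C$, ordered by inclusion, is directed and has union $C$, which is exactly the asserted direct system.

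First I would prove the finiteness theorem. Fix $c \in C$ and consider the twice-iterated comultiplication $(\Delta \otimes 1)\Delta(c) = (1 \otimes \Delta)\Delta(c) \in C \otimes C \otimes C$, writing it as $\sum_{i,j} a_i \otimes d_{ij} \otimes b_j$ with the outer families $\{a_i\}$ and $\{b_j\}$ each linearly independent. Set $D := \langle d_{ij}\rangle$, a finite-dimensional subspace. Applying $\epsilon \otimes 1 \otimes \epsilon$ and using the counit identity twice recovers $c = \sum_{i,j}\epsilon(a_i)\epsilon(b_j)\,d_{ij}$, so $c \in D$. The crucial point is that $\Delta(D) \subseteq D \otimes D$: this I would obtain by feeding coassociativity into the iterated comultiplication and then using the linear independence of the $a_i$ (respectively the $b_j$) to strip off the outer legs and isolate relations of the shape $\Delta(d_{ij}) = \sum_{k} d_{ik}\otimes d_{kj}$. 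Hence $D$ is a finite-dimensional subcoalgebra containing $c$.

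Next I would verify that the collection $\{C^i\}$ of finite-dimensional subcoalgebras is directed. Given two such, $C^i$ and $C^j$, their sum is again a subcoalgebra, since $\Delta(C^i + C^j) \subseteq C^i\otimes C^i + C^j\otimes C^j \subseteq (C^i+C^j)\otimes(C^i+C^j)$, and it is finite-dimensional; hence it dominates both. Directedness follows, and since every $c \in C$ lies in some member by the previous step, we have $C = \bigcup_i C^i$. The inclusions $C^i \hookrightarrow C^j$ (whenever $C^i \subseteq C^j$), together with the inclusions $C^i \hookrightarrow C$, are coalgebra morphisms forming a direct system in ${}_{R}\mathrm{Coalg}$ whose colimit is $C$.

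The main obstacle is the identity $\Delta(D) \subseteq D \otimes D$, and this is precisely where the interpretation of \emph{finite-dimensional} matters: the stripping-off argument relies on choosing dual functionals to the linearly independent outer legs, which is unproblematic when $R$ is a field but genuinely delicate over an arbitrary ring. For a general unital associative $R$ I would therefore read \emph{finite-dimensional} as \emph{finitely generated} and either restrict to the field case for this proposition or impose a flatness (local-freeness) hypothesis on $C$ guaranteeing that the classical independence manipulations survive the passage through the tensor product.
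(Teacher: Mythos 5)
The paper contains no proof to compare against: this proposition sits in the literature-review chapter on profinite duals, whose foreword explicitly defers all proofs to the cited sources, and the statement is just the classical fundamental theorem of coalgebras (Cartier/Sweedler) quoted as background for the Abrams--Weibel material. Judged on its own, your proof is the standard textbook argument and is correct in the only setting where the statement as written makes literal sense, namely over a field: every $c$ lies in the finite-dimensional subspace $D$ spanned by the middle legs of $(\Delta\otimes 1)\Delta(c)$, $D$ is a subcoalgebra, sums of finite-dimensional subcoalgebras are again finite-dimensional subcoalgebras, so the family of all of them is directed with union $C$, and the inclusions form the desired direct system (indeed you prove slightly more than asked, namely that $C$ is the colimit, which is what the profinite-dual construction actually needs). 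One step should be phrased more carefully: stripping the outer legs with dual functionals does not in general produce the comatrix identity $\Delta(d_{ij})=\sum_k d_{ik}\otimes d_{kj}$; what the two one-sided strippings give is $\Delta(D)\subseteq C\otimes D$ and $\Delta(D)\subseteq D\otimes C$, and one then concludes via the subspace identity $(C\otimes D)\cap(D\otimes C)=D\otimes D$ --- an identity which itself uses the field hypothesis. Your closing caveat is exactly the right one, and is more honest than the paper's own phrasing: under the paper's standing hypothesis that $R$ is an arbitrary unital associative ring, ``finite dimensional'' is not meaningful, dual functionals to a linearly independent family need not exist, $D\otimes_R D$ need not inject into $C\otimes_R C$ (so even the notion of subcoalgebra is delicate), and local finiteness can genuinely fail; since the cited Abrams--Weibel results are themselves proved over a field, restricting the proposition to that case (or imposing flatness/Noetherian hypotheses) is a defect of the paper's statement, not of your argument.
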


The construction of a profintie dual of a $R$-coalgebra C is simply the piecing together of its finite $R$-subcoalgebra's dual algebras and their dual structure maps in the category of $R$-algebras.  Formally:

\begin{defn}
\textbf{Profintie dual algebra}

For any $R$-coalgebra $C$, its profintie dual algebra $A_C$ is defined as the inverse system: $<Hom_R(C^i,R),Hom_R(\Delta_i,R,Hom_R(\epsilon_i,R)>$ in the category $_RAlg$, where $<C^i,\Delta_i,\epsilon_i>$ are the finite coalgebras making up $C$.
\end{defn}
These objects form a category as follows:

\begin{prop}
For any ring $R$, the category of profinite $R$-algebras $Pro-R_Alg$, and maps of inverse systems in $R_Alg$ forms a category.  
\end{prop}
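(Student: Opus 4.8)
The plan is to verify directly the three defining axioms of a category---existence of an identity morphism for each object, well-definedness of composition, and associativity of composition---for the proposed data, exploiting the fact that $_RAlg$ is already known to be a category and that a profinite $R$-algebra is by construction nothing more than a functor from a cofiltered index poset into $_RAlg$ whose values are the finite-dimensional dual algebras. First I would pin down the data precisely: an object is an inverse system $\{A_i,\pi_{ij}\}_{i\leq j}$ of such algebras whose transition maps $\pi_{ij}:A_j\rightarrow A_i$ are $R$-algebra homomorphisms satisfying $\pi_{ii}=1_{A_i}$ and $\pi_{ij}\circ\pi_{jk}=\pi_{ik}$, and a morphism $\{A_i\}\rightarrow\{B_j\}$ is a map of inverse systems, that is, a family of $R$-algebra homomorphisms compatible with the transition maps in the evident sense.

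Next I would exhibit the identities and the composition law. The identity on $\{A_i,\pi_{ij}\}$ is the family of identity homomorphisms $\{1_{A_i}\}$, which trivially commutes with every $\pi_{ij}$ and hence is a genuine map of inverse systems. Composition is defined by composing the underlying $R$-algebra homomorphisms componentwise; the only thing requiring checking is that the composite is again a map of inverse systems, and this is a one-line diagram paste, two adjacent commuting transition squares gluing along their shared edge into a commuting rectangle. The unit laws and associativity are then inherited verbatim from $_RAlg$, since each required equation is an equation between underlying algebra homomorphisms, where it already holds by the hypothesis that $_RAlg$ is a category.

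The main obstacle is one of interpretation: the phrase \emph{maps of inverse systems} is ambiguous. Under the naive reading, in which morphisms are level maps over a fixed common index set, the verification above is essentially immediate. Under the honest pro-categorical reading, in which a morphism is a class in $\varprojlim_j \varinjlim_i Hom(A_i,B_j)$ computed in $_RAlg$, the genuinely nontrivial point is that composition descends to equivalence classes, i.e. that the composite is independent of the representatives chosen. I would dispatch this by the standard argument that the cofiltered double limit-colimit is functorial, so that passing to cofinally related representatives leaves the composite unchanged; this is the step on which I would spend the most care, and it is precisely where the directedness of the finite subcoalgebra system---guaranteed by the preceding proposition---is actually used.
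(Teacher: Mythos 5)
The paper itself offers no proof of this proposition: it sits in the review chapter on profinite duals, whose foreword explicitly defers all proofs and details to the cited literature, so there is no argument of the paper's to compare yours against. Your verification is the standard one and is correct under either reading you identify. Two remarks. First, under the naive level-map reading your argument is complete as stated; note only that maps of inverse systems over different index posets must also carry a function between the index sets, so composition composes those as well, but units and associativity are still inherited from $_RAlg$ exactly as you say. Second, under the pro-categorical reading, the directedness needed for composition to descend to equivalence classes in $\varprojlim_j \varinjlim_i Hom(A_i,B_j)$ is the cofilteredness of the index posets of the objects themselves, which is part of the definition of an inverse system; the paper's preceding proposition, that the finite-dimensional subcoalgebras of $C$ form a directed system, is what guarantees that the profinite duals $A_C$ are legitimate \emph{objects} of this category, not what makes the category axioms hold. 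That misattribution is harmless here, since the objects of interest are exactly such duals, but the category structure exists for purely formal reasons independent of that proposition.
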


This construction implies the following isomorphism of categories: 

\begin{prop}
For any ring $R$, there is an anti-equivalence of the categories $_RCoalg$ and $Pro-R_Alg$.  
\end{prop}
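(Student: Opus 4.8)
The plan is to exhibit explicit mutually quasi-inverse contravariant functors between ${}_RCoalg$ and $Pro-R_Alg$, built from the elementary $R$-linear dual $(-)^{\vee}:=Hom_R(-,R)$, which is a genuine duality when restricted to finitely generated projective $R$-modules. First I would construct $\Phi: {}_RCoalg \to Pro-R_Alg$. By the preceding proposition every $R$-coalgebra $C$ is the filtered colimit of its finite subcoalgebras $\{C^i\}$ with the inclusions as transition maps. Applying $(-)^{\vee}$ objectwise sends each $C^i$ to a finite algebra $A^i:=Hom_R(C^i,R)$ whose multiplication is $Hom_R(\Delta_i,R)$ and whose unit is $Hom_R(\epsilon_i,R)$ — exactly the construction already recorded in the definition of $A_C$. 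The inclusions $C^i \hookrightarrow C^j$ dualise to algebra surjections $A^j \to A^i$, so $\Phi(C):=A_C$ is the resulting inverse system, and a coalgebra morphism $\psi:C \to D$ carries finite subcoalgebras into finite subcoalgebras, hence dualises to a morphism of inverse systems, reversing composition.

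Going the other way, I would define $\Psi: Pro-R_Alg \to {}_RCoalg$ by dualising an inverse system $A=\varprojlim A^i$ of finite algebras levelwise: each $(A^i)^{\vee}$ is a finite coalgebra with comultiplication $Hom_R(m_i,R)$ and counit $Hom_R(u_i,R)$, the structure maps of $A$ dualise to coalgebra inclusions, and $\Psi(A):=\varinjlim (A^i)^{\vee}$ is their filtered colimit. The essential input making this well-defined is that for finitely generated projective $V,W$ the canonical map $V^{\vee} \otimes_R W^{\vee} \to (V \otimes_R W)^{\vee}$ is an isomorphism, so that dualising a multiplication $A^i \otimes_R A^i \to A^i$ really produces a coassociative comultiplication $(A^i)^{\vee} \to (A^i)^{\vee} \otimes_R (A^i)^{\vee}$.

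The equivalence then reduces to the biduality natural isomorphism $\eta_V : V \overset{\cong}{\to} V^{\vee\vee}$, valid for finitely generated projective modules. Evaluated on each finite subcoalgebra this gives $C^i \cong (C^i)^{\vee\vee}$ respecting comultiplication and counit; since both $\Phi$ and $\Psi$ are assembled from compatible families over the indexing systems, these glue to a natural isomorphism $\mathrm{id}_{{}_RCoalg} \cong \Psi \circ \Phi$, and the symmetric argument over the inverse systems yields $\mathrm{id}_{Pro-R_Alg} \cong \Phi \circ \Psi$. Naturality follows because $\eta$ is a natural transformation and every morphism is, by construction, compatible with the filtered (resp. cofiltered) presentation.

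The hard part will be the passage between the finite levels and the limits. Two points demand care: first, that $(-)^{\vee}$ genuinely exchanges filtered colimits of coalgebras with cofiltered limits of algebras compatibly with the structure maps — that the levelwise duals reassemble into the correct (co)limit rather than merely agreeing stagewise — and second, that over an arbitrary ring $R$ the finite subcoalgebras $C^i$ really are finitely generated projective, since both $\eta_V$ and the tensor–dual isomorphism fail without this hypothesis. I would therefore read the \emph{finite dimensional} clause of the preceding proposition as \emph{finitely generated projective over $R$}, under which both required isomorphisms hold, and then verify the colimit/limit compatibility by checking it on the cofinal system of finite pieces.
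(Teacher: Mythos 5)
The paper gives no proof of this proposition at all --- it is stated as a literature-review item, justified only by the phrase ``This construction implies the following isomorphism of categories'' referring to the construction of the profinite dual $A_C$ --- and your argument is exactly that construction fleshed out: your $\Phi$ is the paper's $C \mapsto A_C$, and your $\Psi$, the biduality of finitely generated projective modules, and the cofinality check are the standard way this anti-equivalence is completed in the literature (Sweedler; Abrams--Weibel). The two caveats you flag yourself --- reading ``finite dimensional'' as finitely generated projective over $R$, and verifying that the levelwise duals reassemble into the correct (co)limits, which is also where the pro-category morphism structure and the failure of dualized inclusions to be surjective over a general ring must be handled --- are precisely where the real content lies, so your proposal matches the intended proof in approach and is sound at the paper's level of rigor.
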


\begin{ex}
For any ring $R$, and any finite dimensional $R$-coalgebra $C$ with finite dimensional dual $R$-algebra $A$, $C$ and $A$ both form trivial direct, and inverse systems in their respective categories, dual to each other via the $Hom_{_RMod}(-,R)$ functor.  This is a special case of the above result.  
\end{ex}

\section{Rational Modules}
In the case where the algebra and its dual coalgebra in question where both finite, there was an equivalence of the categories $_AMod \cong^{-1} \mathscr{M}^C$, where $A$ is the dual algebra of $C$.  This is now generalised to the setting of the pro-objects in question.

\begin{defn}
\textbf{Rational Module}

Let A be a profintie algebra, that is $<A_i,\mu^i, \tilde{\epsilon}^i>$, where $\mu_i$ and $\tilde{\epsilon}^i$ are the structure maps of the finite algebras $A^i$.  

Then a \textbf{rational $A$-module} is an $A$-module, such that for every element $m\ in M$, the module generated by m $Am$ is isomorphic to the quotient module of M by some $A^j$ in the inverse system definite A, that is $Am \cong M/A^j$.  
\end{defn}

\begin{prop}
For any ring $R$ and any profintie $R$-algebra $A$, the collection of all rational $A$-modules and $A$-module homomorphisms forms a full subcategory $_AMod^{rat}$ of $_AMod$ with enough injectives.  
\end{prop}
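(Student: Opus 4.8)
The statement splits into two parts: that rational modules form a full subcategory, and that this subcategory has enough injectives. The first is essentially formal, and the real work lies in the second.

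First I would dispatch fullness and the abelian structure. Fullness is immediate, since by fiat the morphisms are all $A$-module homomorphisms between rational modules; thus the inclusion $\iota : {}_{A}Mod^{rat} \hookrightarrow {}_{A}Mod$ is full, and it is a subcategory because identities are $A$-linear and composites of $A$-linear maps are $A$-linear. To make the phrase \emph{enough injectives} meaningful I would first record that ${}_{A}Mod^{rat}$ is an abelian subcategory. Reading the rationality condition as the requirement that the action on each element factor through one of the finite quotients $A^j$ (equivalently, that every $m \in M$ be annihilated by the open ideal $I_j = \ker(A \to A^j)$), one checks directly that a submodule of a rational module is rational, that a quotient is rational (the cyclic module generated by the image of $m$ is a quotient of $Am$, hence still factors through some $A^j$), and that arbitrary direct sums of rational modules are rational (elements being finitely supported). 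Consequently kernels, cokernels and coproducts formed in ${}_{A}Mod$ stay inside ${}_{A}Mod^{rat}$, so $\iota$ is exact and ${}_{A}Mod^{rat}$ is abelian (indeed a Grothendieck category, being cocomplete with exact filtered colimits and admitting a generator).

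For enough injectives the plan is to exploit a right adjoint to $\iota$. I would define the \emph{rational part} $\mathrm{rat}(M)$ of an arbitrary $A$-module $M$ to be the sum of all rational submodules of $M$; by the closure properties above this sum is again rational, and it is the largest rational submodule of $M$. Since any $A$-linear map out of a rational module has rational image, the assignment $M \mapsto \mathrm{rat}(M)$ is functorial and satisfies $\mathrm{Hom}_A(N,M) \cong \mathrm{Hom}_A(N,\mathrm{rat}(M))$ for every rational $N$; that is, $\mathrm{rat}$ is right adjoint to the exact inclusion $\iota$. Now the ambient category ${}_{A}Mod$ is a module category over a ring and therefore has enough injectives, and a right adjoint of an exact functor preserves injectives, so $\mathrm{rat}$ sends injective $A$-modules to injective objects of ${}_{A}Mod^{rat}$. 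Given a rational module $M$, I would choose an embedding $M \hookrightarrow E$ into an injective $A$-module $E$; applying $\mathrm{rat}$ and using $\mathrm{rat}(M) = M$ produces a monomorphism $M \hookrightarrow \mathrm{rat}(E)$ with $\mathrm{rat}(E)$ injective in ${}_{A}Mod^{rat}$, which establishes the claim.

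The main obstacle I anticipate is not this formal adjunction but the groundwork beneath it: pinning down the rationality condition (the printed formula $Am \cong M/A^j$ needs to be read as \enquote{$Am$ is a module over the finite quotient $A^j$}) and proving the closure properties, most delicately that the sum of all rational submodules is again rational, which is exactly what guarantees $\mathrm{rat}$ is well defined and right adjoint to $\iota$. An alternative, less self-contained route would be to invoke the anti-equivalence ${}_{A}Mod^{rat} \simeq \mathscr{M}^{C}$ with the comodule category of the coalgebra $C$ dual to $A$ and transport \emph{enough injectives} from there; but the adjoint-functor argument has the advantage of staying internal to the module categories already in play.
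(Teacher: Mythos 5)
There is nothing in the paper to compare your argument against: this proposition sits in the review chapter on profinite duals, where the author explicitly defers all proofs to the literature (Abrams--Weibel, AL-Takhman), so the paper itself offers no proof of this statement. Your argument is correct and is in fact the standard one for this result. The two pillars both hold: (i) under the discrete-module reading of rationality (every element annihilated by an open ideal $\ker(A \to A^j)$, the directedness of the inverse system giving closure of the annihilator condition under finite intersections), rational modules are closed under submodules, quotients and direct sums, so $_AMod^{rat}$ is a full abelian subcategory with exact inclusion $\iota$; and (ii) the rational-part functor $\mathrm{rat}$ is right adjoint to $\iota$, hence preserves injectives, since $\mathrm{Hom}_A(-,\mathrm{rat}(I)) \cong \mathrm{Hom}_A(\iota(-),I)$ is exact whenever $I$ is injective in $_AMod$; embedding a rational $M$ into an injective $A$-module $E$ and applying $\mathrm{rat}$ then gives $M = \mathrm{rat}(M) \hookrightarrow \mathrm{rat}(E)$. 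You were also right to flag that the paper's printed rationality condition ($Am \cong M/A^j$) is garbled and must be reinterpreted; your reading is the one under which the closure properties actually go through. The alternative you mention in closing --- transporting injectivity across the equivalence with the comodule category $\mathscr{M}^C$, where one embeds a comodule $M$ into $M \otimes_R C$ via its coaction --- is essentially the route taken in the cited literature, but it requires hypotheses on $C$ (such as flatness or projectivity over $R$) that the paper's ``any ring $R$'' phrasing quietly elides; your internal adjoint-functor argument has the merit of avoiding that dependence entirely.
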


Mutatis mutandis, one could show:

\begin{cor}
For any commutative ring $R$ and any profintie $R$-algebra $A$, the collection of all rational $A$-bimodules and $A$-module homomorphisms forms a full subcategory $_AMod_A^{rat}$ of $_AMod_A$ with enough injectives.  
\end{cor}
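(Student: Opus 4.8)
The plan is to deduce the two-sided statement from the one-sided Proposition by passing to the enveloping algebra. First I would observe that, since $R$ is commutative, the opposite algebra $A^o$ is again an $R$-algebra and one may form $A^e := A \otimes_R A^o$; an $A$-bimodule structure on an $R$-module $M$ is then precisely the same datum as a left $A^e$-module structure, the two tensor factors of $A^e$ recording the left and right actions. This identification is an isomorphism of categories $_AMod_A \cong {}_{A^e}Mod$ carrying bimodule homomorphisms to $A^e$-module homomorphisms, so the \emph{full subcategory} clause of the statement will transport for free once the rational objects are matched up.

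The first substantive step is to show that $A^e$ is itself a profinite $R$-algebra. Writing $A$ as the inverse system $\langle A^i, \mu^i, \tilde{\epsilon}^i\rangle$ of finite $R$-algebras, the opposite $A^o$ is the inverse system of the finite opposite algebras $(A^i)^o$, and I would assemble the (doubly-indexed) inverse system whose terms are the $A^i \otimes_R (A^j)^o$ with the induced transition maps. Because each $A^i$ is finite and $R$ is commutative, each $A^i \otimes_R (A^j)^o$ is again a finite $R$-algebra, so this system exhibits $A^e$ as an object of the category of profinite $R$-algebras.

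The second step, which I expect to be the main obstacle, is to verify that the two notions of rationality coincide: an $A$-bimodule $M$ is rational in the bimodule sense (its left and right actions being simultaneously locally finite through the systems $\{A^i\}$ and $\{(A^j)^o\}$) precisely when $M$ is a rational $A^e$-module in the sense of the preceding Definition. The delicate point is that bimodule rationality constrains the two one-sided actions separately, whereas $A^e$-rationality constrains the single action of the product system $\{A^i \otimes_R (A^j)^o\}$; I would show that the cyclic piece $A^e \cdot m = A\,m\,A$ is a finite quotient of $M$ exactly when both one-sided cyclic pieces $A m$ and $m A$ are, using that a module over a tensor of two systems is locally finite if and only if it is locally finite for each factor separately. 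Care is needed here because the literal cyclic-submodule condition of the Definition must be read as local finiteness of the action; once that reading is fixed the equivalence is essentially bookkeeping, but it is the only genuinely non-formal step.

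With both identifications in hand, the Proposition applied to the profinite $R$-algebra $A^e$ yields that $_{A^e}Mod^{rat}$ is a full subcategory of $_{A^e}Mod$ with enough injectives. Transporting this conclusion back along the isomorphism $_AMod_A \cong {}_{A^e}Mod$, and using that both fullness and the property of having enough injectives are invariant under an isomorphism of categories, gives that $_AMod_A^{rat}$ is a full subcategory of $_AMod_A$ with enough injectives, as claimed.
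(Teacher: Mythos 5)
Your proposal is correct in substance, but it takes a genuinely different route from the paper: the paper gives no proof at all for this corollary, only the phrase ``Mutatis mutandis, one could show,'' i.e.\ it asks the reader to repeat for bimodules whatever argument (cited from the Weibel--Abrams and AL-Takhman literature) establishes the one-sided Proposition. You instead keep that Proposition as a black box and change the algebra: you identify $_AMod_A$ with $_{A^e}Mod$ for the enveloping pro-algebra $A^e$ assembled from the system $\{A^i \otimes_R (A^j)^o\}$, match the two notions of rationality, and transport ``full subcategory with enough injectives'' along the isomorphism of categories. This buys two things the paper's one-liner does not: it isolates exactly where commutativity of $R$ enters (it is what allows $A^e := A\otimes_R A^o$ to be formed as an $R$-algebra, which is precisely the hypothesis the Corollary adds to the Proposition), and it needs no access to the internals of the Proposition's proof. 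The costs are the verifications you flag: that $A^e$ is again profinite (harmless, since a tensor product of finite $R$-algebras over commutative $R$ is finite), and that bimodule rationality agrees with $A^e$-rationality, which --- given the paper's rather garbled definition of a rational module ($Am \cong M/A^j$) --- genuinely requires the charitable ``locally finite action'' reading you adopt. One caveat you should state explicitly: the identification $_AMod_A \cong {}_{A^e}Mod$ is valid for bimodules on which the two induced $R$-actions coincide ($R$-central bimodules); without that convention the enveloping algebra would have to be taken over $\mathbb{Z}$ and the finiteness argument would change. In the Hochschild-theoretic context of this paper that convention is surely intended, so this is a presentational rather than a mathematical gap.
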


Now the equivalence of categories hinted at above is generalised as follows:  

\begin{thrm}
\textbf{Sweedler’s Theorem}

For any ring $R$ and any $R$-coalgebra $C$ there is an equivalence of categories:  

$_{A_C}Mod^{rat} \cong \mathscr{M}^C$, where $A_C$ is the profintie dual algebra of $C$.  
\end{thrm}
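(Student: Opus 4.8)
The plan is to construct an explicit pair of quasi-inverse functors between $\mathscr{M}^C$ and $_{A_C}\mathrm{Mod}^{rat}$ and to bootstrap their verification from the finite-dimensional equivalence already recorded, exploiting the directed presentation $C = \varinjlim C^i$ of the coalgebra and the dual inverse presentation $A_C = \varprojlim \mathrm{Hom}_R(C^i,R)$ of its profinite dual.

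First I would define the comparison functor $F : \mathscr{M}^C \to {}_{A_C}\mathrm{Mod}^{rat}$. Given a right $C$-comodule $(M,\rho_M)$ with $\rho_M : M \to M \otimes_R C$, I equip $M$ with the left $A_C$-action determined by $a \cdot m := (1_M \otimes a)(\rho_M(m))$, that is, by contracting the $C$-leg of the coaction against $a \in A_C$. Since $\rho_M(m)$ lands in $M \otimes_R C^i$ for some finite-dimensional subcoalgebra $C^i$, the contraction against the $i$-th component of $a$ is well-defined and independent of the choice by compatibility of the inverse system. The comodule axioms of the earlier definition (coassociativity and counitality) translate exactly into associativity and unitality of this action under the convolution product on $A_C$, and the fact that the coaction factors through some $C^i$ forces $M$ to be \emph{rational} in the sense defined above.

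Second, I would build the inverse $G : {}_{A_C}\mathrm{Mod}^{rat} \to \mathscr{M}^C$, where the rationality hypothesis does all the work: it is precisely what guarantees a coaction valued in the \emph{uncompleted} tensor product $M \otimes_R C$ rather than a completion. Concretely, each $m$ lies in a finite-dimensional submodule on which $A_C$ acts through a finite quotient $A^i \cong \mathrm{Hom}_R(C^i,R)$, and dualising that $A^i$-action through the finite-dimensional equivalence $_{A^i}\mathrm{Mod} \cong \mathscr{M}^{C^i}$ produces a local coaction into $M \otimes_R C^i \subseteq M \otimes_R C$; compatibility along the system glues these into a single $\rho_M$.

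The heart of the argument, and the step I expect to be the main obstacle, is showing $F$ and $G$ are mutually quasi-inverse, which reduces to \emph{local finiteness}: every $C$-comodule is the directed union of its finite-dimensional subcomodules, and dually every rational $A_C$-module is the directed union of finite-dimensional submodules on which $A_C$ acts through a finite quotient. Granting this, I would pass to the colimit over $i$, so that the finite-dimensional equivalences $_{A^i}\mathrm{Mod} \cong \mathscr{M}^{C^i}$ assemble, by naturality in the direct/inverse system, into the claimed equivalence, with the natural isomorphisms $GF \cong \mathrm{id}$ and $FG \cong \mathrm{id}$ obtained levelwise and then glued. Functoriality on morphisms is then routine, since both $F$ and $G$ are the identity on underlying $R$-modules and merely transport structure maps.
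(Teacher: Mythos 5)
The paper never actually proves this theorem: it sits in the literature-review chapter on profinite duals, whose foreword explicitly says that ``the interested reader is refereed to other articles for proofs or more details,'' so the paper's entire justification is a citation to Sweedler and to Abrams--Weibel. There is therefore no argument in the paper to compare yours against; what can be said is that your sketch reconstructs precisely the standard proof those sources give. Your functor $F$ (contracting the $C$-leg of the coaction against a functional, $a \cdot m := (1_M \otimes a)(\rho_M(m))$, well-defined because the coaction of each element lands at a finite stage of the directed system), your functor $G$ (using rationality to factor the action locally through a finite quotient $A^i \cong \mathrm{Hom}_R(C^i,R)$ and dualising via the finite-stage equivalence ${}_{A^i}\mathrm{Mod} \cong \mathscr{M}^{C^i}$), and your identification of local finiteness as the crux are exactly the classical argument, and they correctly lean on the two facts the paper does record earlier in that chapter: that the $C^i$ form a direct system exhausting $C$, and that the finite-dimensional equivalence holds.

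One caveat deserves flagging, though it is inherited from the paper rather than introduced by you. The statement claims the result for an arbitrary ring $R$, but essentially every step of your argument --- ``finite dimensional'' subcoalgebras, the duality $A^i = \mathrm{Hom}_R(C^i,R)$ together with ${}_{A^i}\mathrm{Mod} \cong \mathscr{M}^{C^i}$, and the identification of $M \otimes_R C$ with the directed colimit of the $M \otimes_R C^i$ with well-behaved (injective) transition maps --- is only automatic when $R$ is a field. In general one needs each $C^i$ to be finitely generated projective over $R$, and the fundamental theorem of coalgebras/comodules invoked for local finiteness can fail outright without such hypotheses. So your proof is sound conditional on the propositions the paper records just before the theorem, which is a fair standard here, but it is worth stating that conditionality explicitly rather than presenting the argument as unconditional ``for any ring $R$.''
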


Mutatis mutandis, it then follows \textit{(which will also call by the same name)}: 

\begin{thrm}
\textbf{Sweedler’s Theorem}

For any commutative ring $R$, and any $R$-coalgebra $C$ there is an equivalence of categories:  

$_{A_C}Mod{A_C}^{rat} \cong ^C\mathscr{M}^C$, where $A_C$ is the profintie dual algebra of $C$.  
\end{thrm}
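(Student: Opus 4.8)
The plan is to bootstrap the bimodule statement from the one-sided \emph{Sweedler's Theorem} already established, using the enveloping coalgebra $C^e := C \otimes_R C^o$ as the bridge. The key observation is that a $C$-bicomodule is precisely a right $C^e$-comodule; that is, there is an isomorphism of categories ${}^C\mathscr{M}^C \cong \mathscr{M}^{C^e}$, obtained by packaging the left and right coactions of a bicomodule into the single coaction $M \to M \otimes_R (C \otimes_R C^o)$ and checking that the bicomodule axioms translate exactly into the single $C^e$-comodule axioms. This identification is also where commutativity of $R$ enters, since it is needed to form $C^o$ and the tensor coalgebra $C \otimes_R C^o$ as an $R$-coalgebra in the first place.

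Having made this identification on the comodule side, I would next establish the dual statement on the algebra side, namely that the profinite dual of the enveloping coalgebra is the enveloping algebra of the profinite dual: $A_{C^e} \cong A_C \otimes_R A_C^o$. By the earlier proposition, $C$ is the filtered colimit of its finite-dimensional subcoalgebras $C^i$, and one checks that the finite-dimensional subcoalgebras of $C^e$ are cofinally the tensor products $C^i \otimes_R (C^j)^o$. Applying $\mathrm{Hom}_R(-,R)$ and using that finite duality is monoidal, $\mathrm{Hom}_R(V \otimes_R W, R) \cong \mathrm{Hom}_R(V,R) \otimes_R \mathrm{Hom}_R(W,R)$ on the finite pieces, sends this direct system to the inverse system defining $A_C \otimes_R A_C^o$; compatibility with the comultiplication and counit is then a diagram chase using the anti-equivalence of the previous section.

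With these two identifications in hand, I would match the rationality conditions: a rational $A_C$-bimodule is the same thing as a rational module over $A_C \otimes_R A_C^o \cong A_{C^e}$, since the two-sided action is by definition the action of the enveloping algebra, and the finiteness condition defining rationality on each side corresponds to rationality over the enveloping algebra. This yields ${}_{A_C}Mod_{A_C}^{rat} \cong {}_{A_{C^e}}Mod^{rat}$. Finally I would apply the one-sided \emph{Sweedler's Theorem} to the coalgebra $C^e$ in place of $C$, obtaining ${}_{A_{C^e}}Mod^{rat} \cong \mathscr{M}^{C^e}$, and compose the three equivalences to conclude ${}_{A_C}Mod_{A_C}^{rat} \cong {}^C\mathscr{M}^C$.

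The hard part will be the second step, the identification $A_{C^e} \cong A_C \otimes_R A_C^o$ as profinite algebras. The subtleties are: verifying that the tensor products $C^i \otimes_R (C^j)^o$ really are cofinal among the finite-dimensional subcoalgebras of $C^e$, not merely a subfamily; and that $\mathrm{Hom}_R(-,R)$ remains monoidal on these pieces over a general commutative ring $R$ rather than a field, which requires the finite subcoalgebras to be finitely generated projective as $R$-modules so that the canonical map $C^{i\,\star} \otimes_R C^{j\,\star} \to (C^i \otimes_R C^j)^{\star}$ is an isomorphism. Reconciling the rationality condition across the enveloping-algebra identification in the third step is the second delicate point, but it reduces to unwinding definitions once the algebra isomorphism is secured.
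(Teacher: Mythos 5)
The paper offers no argument here at all: the two-sided statement is asserted to follow \emph{mutatis mutandis} from the one-sided Sweedler theorem stated just before it, i.e., by re-running the one-sided argument (itself deferred to the literature, this being a review chapter) with left--right structures carried along. Your proposal is correct but takes a genuinely different, and more economical, route: rather than reproving anything, you reduce the bimodule case to the already-stated one-sided case applied to the enveloping coalgebra $C^e = C \otimes_R C^o$, via the chain ${}_{A_C}Mod_{A_C}^{rat} \cong {}_{A_{C^e}}Mod^{rat} \cong \mathscr{M}^{C^e} \cong {}^C\mathscr{M}^C$. This buys several things: it is an actual derivation from the stated theorem rather than an appeal to repeat its proof; it isolates exactly where commutativity of $R$ enters (in forming $C^e$); and it sets up identifications the paper itself trades on later, namely the duality between $C$-bicomodules and rational $A_C$-bimodules used in the enough-free-bicomodules lemma, and the identification $A_{C^e} \cong A_C$ for autoenvelopes. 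The price is your middle step, $A_{C^e} \cong A_C \otimes_R A_C^o$ as profinite algebras, which the mutatis-mutandis route never needs: there you must check cofinality of the subcoalgebras $C^i \otimes_R (C^j)^o$ among the finite subcoalgebras of $C^e$, and monoidality of finite duality over $R$; both points you correctly flag, and both require the finite subcoalgebras to be finitely generated projective as $R$-modules --- a hypothesis the paper's local-finiteness proposition silently assumes in any case. One caution: $A_C \otimes_R A_C^o$ must be read levelwise, as the inverse system with terms $A_{C^i} \otimes_R (A_{C^j})^o$ (in effect a completed tensor product), not as the plain tensor product of the limit algebras; with the paper's definition of profinite algebras as inverse systems this is the only sensible reading, but it deserves to be said explicitly.
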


\begin{prop}
For any commutative ring $R$, any profintie $R$-algebra $A$ and any rational $A$-bimoudle $M$, there are $A^i$-bimodules $M^i$, such that $\underset{j}{\varprojlim} M^j \cong M$, when each $M_j$ is considered as an object in the category $_AMod_A^{rat}$.  
\end{prop}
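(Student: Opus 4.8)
The plan is to pass through Sweedler's theorem to realise $M$ at each finite level of the profinite structure, assemble these levels into an inverse system indexed by the same directed set that presents $A$, and then argue that the limit, \emph{taken inside} $_A\mathrm{Mod}_A^{rat}$, returns $M$. Write $A=\varprojlim_i A_i$ as in the definition of a profinite dual, and recall that each $A_i$ is the dual of a finite-dimensional subcoalgebra $C^i\hookrightarrow C$; dualising these inclusions produces surjections $A\twoheadrightarrow A_i$ whose kernels $I_i$ form a decreasing (cofiltered) family of open ideals with $\bigcap_i I_i=0$. Since $M$ is rational, its two-sided action is continuous for this filtration, so for each $i$ the quotient $M^i:=M/(I_iM+MI_i)$ is genuinely an $A_i$-bimodule, and the containments $I_i\subseteq I_j$ for $i\ge j$ yield surjective transition maps $M^i\twoheadrightarrow M^j$. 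This is the inverse system whose limit I claim recovers $M$.

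Next I would build the comparison morphism $\theta\colon M\to\varprojlim_i M^i$ out of the quotient maps $M\twoheadrightarrow M^i$ and check that it is a monomorphism: its kernel is $\bigcap_i(I_iM+MI_i)$, which vanishes because rationality forces every element of $M$ to be annihilated by some $I_j$ on both sides, so separatedness $\bigcap_i I_i=0$ descends to $M$. The substantive point is the interpretation of the target. Because the largest-rational-submodule functor $(-)^{rat}$ is right adjoint to the inclusion $_A\mathrm{Mod}_A^{rat}\hookrightarrow\,_A\mathrm{Mod}_A$, it preserves limits, so the limit of $\{M^i\}$ computed \emph{in the rational category} is exactly $(\varprojlim_i M^i)^{rat}$, the rational part of the ordinary ($I$-adic completion) inverse limit. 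Thus the proposition reduces to the identity $M\cong(\varprojlim_i M^i)^{rat}$, with $\theta$ the candidate isomorphism.

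The hard part — and I expect it to be the genuine obstacle rather than a routine verification — is surjectivity of $\theta$ onto the rational part. The ordinary inverse limit $\varprojlim_i M^i$ is the completion of $M$ and is in general strictly larger than $M$; worse, even its rational part can exceed $M$, since a compatible family whose finite supports grow \emph{without bound} may be annihilated by a single $I_j$ yet fail to lie in the image of $\theta$. Establishing $M\cong(\varprojlim_i M^i)^{rat}$ therefore cannot be purely formal: it requires a finiteness or Mittag--Leffler input controlling how the supports at successive levels stabilise, or equivalently a cofinal choice of the $M^i$ as honest finite-dimensional pieces drawn from the presentation of $M$ as the filtered union of its finite-dimensional subbicomodules. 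I would attack it by transporting the whole diagram across the bimodule form of Sweedler's theorem into $^C\mathscr{M}^C$, where $M$ is visibly this filtered union, and then matching those finite subbicomodules against the levels $M^i$ so as to exhibit every extraneous element of the completion as non-rational after all; pinning down that matching is where the real work lies.
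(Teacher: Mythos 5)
The paper offers no proof to compare against: this proposition sits in the literature-review chapter, whose foreword explicitly defers all proofs to Abrams--Weibel, so your proposal must stand on its own. It does not, and the failure is concrete. Your inverse system $M^i:=M/(I_iM+MI_i)$ is the wrong object, and the step you present as routine --- injectivity of $\theta$ --- is a non sequitur: an element annihilated by some $I_j$ can perfectly well lie in $I_iM+MI_i$ for \emph{every} $i$. Take $C$ the divided-power coalgebra over a field $k$, with basis $\{x_n\}_{n\geq 0}$ and $\Delta x_n=\sum_{i+j=n}x_i\otimes x_j$; its profinite dual is $A_C\cong k[[t]]$ with open ideals $I_n=(t^n)$, and $M:=C$ is a rational (symmetric, since $C$ is cocommutative) $A_C$-bimodule on which $t\cdot x_n=x_{n-1}$. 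Every element is killed by some $I_n$, so $M$ is rational, yet $M$ is divisible: $tM=M$, hence $I_nM+MI_n=M$ for all $n$, every quotient $M^n=0$, $\varprojlim_n M^n=0$, and $\ker\theta=\bigcap_n(I_nM+MI_n)=M$. Already $x_0=t\cdot x_1\in I_1M$ even though $I_1x_0=0$, which is exactly the implication your separatedness argument assumes cannot happen. No rational-part correction $(-)^{rat}$ --- your coreflection observation there is formally fine --- can recover $M$ from the zero system.

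The correct mechanism runs in the opposite direction: by Sweedler's theorem $M$ corresponds to a $C$-bicomodule, hence is the \emph{directed union} of its finite-dimensional subbicomodules, equivalently of the torsion sub-bimodules $M[I_j]=\{m: I_jm=mI_j=0\}$, each an honest $A^j$-bimodule; so the structural identity is $M\cong\varinjlim_j M[I_j]$, a colimit of submodules rather than a limit of quotients or a completion. The proposition's $\varprojlim$ is best read in the pro-object sense of the paper's own definitions (the $M^j$ ``inherit'' the inverse-system indexing of $A=\varprojlim_j A^j$), and indeed the paper itself writes $\varinjlim$ when these $M^j$ are actually used in the definition of continuous Hochschild cohomology. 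Your closing instinct was sound --- you flagged surjectivity onto the rational part as non-formal and left it open, so the proposal was incomplete by its own account --- but the obstruction is upstream of where you placed it: with the quotient system, injectivity already fails, and the repair is not a Mittag--Leffler condition but replacing quotients by the exhaustive filtration of sub-bimodules.
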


The bimodules denoted $M^j$ above, will be called the \textbf{standard $A^j$-bimodules representing $M$}.  These \textit{"inherit"} an inverse system structure from $A$, when consider in $_AMod_A$.  

\begin{prop}
For any commutative ring $R$, any profintie $R$-algebra $A$ and any rational $A$-bimoudle $M$, the standard $A^j$-bimodules representing $M$ form an inverse system when considered considered as an objects in the category $_AMod_A^{rat}$.  
\end{prop}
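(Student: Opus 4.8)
The plan is to exploit the fact, already established in the immediately preceding proposition together with its accompanying remark, that the standard bimodules $M^j$ carry an inverse system structure \emph{inherited from $A$} when regarded as objects of the ambient category ${}_AMod_A$. Since the subcategory ${}_AMod_A^{rat}$ of rational bimodules is \emph{full} in ${}_AMod_A$ (by the corollary asserting that rational $A$-bimodules form a full subcategory with enough injectives), the only thing that genuinely requires verification is that each object $M^j$ of this inherited system actually lands inside ${}_AMod_A^{rat}$; once this is known, fullness forces all the transition maps to be morphisms in ${}_AMod_A^{rat}$ automatically.

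First I would record the inherited data explicitly: for indices $j \geq j'$ in the directed set defining $A$, we are handed transition $A$-bimodule homomorphisms $\pi_{jj'} : M^j \to M^{j'}$ together with the compatibility identities $\pi_{j'j''} \circ \pi_{jj'} = \pi_{jj''}$ and $\pi_{jj} = 1_{M^j}$, all of which hold in ${}_AMod_A$ by the preceding remark. Thus the functor $J^{op} \to {}_AMod_A$ is already in place, and everything reduces to relocating its image into the full subcategory.

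The crux of the argument, and the step I expect to be the main obstacle, is checking that each standard bimodule $M^j$ is rational. Here I would use that $M^j$ is by construction an $A^j$-bimodule for the \emph{finite} algebra $A^j$ appearing in the inverse system for $A$, so the two-sided $A$-action on $M^j$ factors through the canonical projection $A \twoheadrightarrow A^j$. Consequently, for any $m \in M^j$ the cyclic submodule $Am$ coincides with $A^j m$, which is finite, and one verifies it satisfies the bimodule analogue of the defining rationality criterion $Am \cong M^j / A^{(j)}$ for a member $A^{(j)}$ of the defining system of $A$. The delicate point is reconciling the somewhat ad hoc rationality condition of the earlier definition with the clean statement that the action factors through a finite quotient; I would handle this either by translating ``factors through $A^j$'' directly into the language of the defining inverse system, or, more transparently, by invoking Sweedler's Theorem to identify ${}_AMod_A^{rat}$ with $^C\mathscr{M}^C$, under which finiteness of $A^j$ corresponds to the local finiteness of the associated comodule and rationality becomes immediate.

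Finally, with each $M^j$ confirmed to lie in ${}_AMod_A^{rat}$ and the maps $\pi_{jj'}$ being $A$-bimodule homomorphisms between rational bimodules, fullness of the inclusion ${}_AMod_A^{rat} \hookrightarrow {}_AMod_A$ guarantees these are morphisms in ${}_AMod_A^{rat}$. The compatibility identities survive verbatim since the inclusion functor is faithful, so the corestricted functor $J^{op} \to {}_AMod_A^{rat}$ is well defined. Hence the standard bimodules $M^j$ form an inverse system in ${}_AMod_A^{rat}$, as claimed.
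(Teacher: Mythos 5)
The paper never actually proves this proposition: it sits in the literature-review chapter on profinite duals, whose foreword explicitly defers all proofs to the cited sources (Abrams--Weibel, AL-Takhman), and the only textual support given is the unproven remark just above it that the $M^j$ ``inherit'' an inverse system structure from $A$ when considered in ${}_AMod_A$. So there is nothing in the paper to compare your argument against; what you have written is a filling-in of that remark, and it is correct in outline. Your decomposition isolates the two genuine points: (i) each standard bimodule $M^j$ is itself rational, and (ii) fullness of ${}_AMod_A^{rat}$ in ${}_AMod_A$ then transports the inherited transition maps, together with their compatibility identities, into the subcategory automatically. Point (ii) is exactly right and is the cheap half. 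For point (i), your primary argument --- the $A$-action on $M^j$ factors through the finite quotient $A^j$, so every cyclic submodule is an $A^j$-module --- is the standard reason these objects are rational, and your fallback through Sweedler's theorem is legitimate within the paper's ordering of results and is not circular: since $A^j$ is finite, an $A^j$-bimodule is the same thing as a bicomodule over the finite dual coalgebra $C^j := Hom_R(A^j,R)$, and corestriction along the inclusion $C^j \hookrightarrow C$ exhibits $M^j$ as a $C$-bicomodule, hence as a rational $A$-bimodule. The one caveat, which you flag honestly, is that the paper's stated rationality condition ($Am \cong M/A^j$) is too garbled to be verified literally, so any rigorous proof of (i) must substitute for it the standard condition (the action on each cyclic submodule factors through some finite quotient in the defining system); granting that substitution, your proof is complete.
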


\section{Continuous Hochschild Cohomology}

Following the above notion; as one should expect the Hochschild cohomology of a profintie algebra $A$, with values in a rational module $M$, is taken to be the inverse limit of the finite algebras $A^i$ and their corresponding $A^i$-bimoduels $M^i$ making up $A$ and $M$, respectively.  

\begin{defn}
\textbf{Continuous Hochschild Cohomology}

For any commutative ring $R$, any profintie $R$-algebra $A$ and any rational $A$-bimoudle $M$,
the \textbf{continuous Hochschild cohomology} $Hosh_{cnt}^{\star}(A,M)$ of A, with values in M is defined as the inverse limit: 

$Hosh_{cnt}^{\star}(A,M) := \underset{j}{\varinjlim} Hosh^{\star}(A^j,M^j)$, \textit{(where $Hosh^{\star}(A^j,M^j)$ is the usual Hochschild cohomology of the algebra A, with coefficients in the rational $A^i$-bimodule M)}.  
\end{defn}

\begin{ex}
For any commutative ring $R$, any finite dimensional $A$-algebra and any $A$-bimodule $M$, the trivial direct system $<M,1_M>$ implies isomorphisms: 

$Hosh_{cnt}^{\star}(A,M)$ $\cong$ $Hosh_{cnt}^{\star}(A,M)$, and therefore the continuous Hochschild cohomology generalises the usual Hochschild cohomology of an algebra when only considering its coefficients in rational $A$-modules.  
\end{ex}

Finally, the final and possibly result of interest in this review of the literature is the realisation of a result stated earlier:  

\begin{thrm}
\textbf{Abrams and Weibel's Theorem}

For any commutative ring $R$, $R$-coalgebra $C$ and any $C$-bicomodules $M$ and $N$, there are isomorphisms of $R$-bimodules: 

$Hosh_{cnt}^{\star}(A_C,N \otimes_R M) \cong cotor_{C}^{\star}(N,M)$, where $A_C$ is the profinite dual algebra of $C$ and $N$ and $M$ are considered as rational $A_C$-bicomodules in the left hand side of the above isomorphisms.  
\end{thrm}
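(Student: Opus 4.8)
The plan is to reduce the asserted isomorphism to its finite-dimensional restrictions and then glue. Recall that $C = \varinjlim_i C^i$ is the filtered union of its finite-dimensional subcoalgebras, that $A_C = \varprojlim_i A^i$ with $A^i = Hom_R(C^i,R)$, and that the rational bimodules $N$ and $M$ are recovered from their standard representing systems as $N \cong \varinjlim_i N^i$ and $M \cong \varinjlim_i M^i$ (whence $N \otimes_R M \cong \varinjlim_i N^i \otimes_R M^i$, since tensor products commute with filtered colimits). The right-hand side $Hosh_{cnt}^{\star}(A_C, N \otimes_R M)$ is then, by definition, the colimit $\varinjlim_i Hosh^{\star}(A^i, N^i \otimes_R M^i)$. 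Hence, if I can show on the one hand that $cotor_C^{\star}(N,M) \cong \varinjlim_i cotor_{C^i}^{\star}(N^i, M^i)$, and on the other that each finite stage satisfies $cotor_{C^i}^{\star}(N^i, M^i) \cong Hosh^{\star}(A^i, N^i \otimes_R M^i)$ compatibly with the transition maps, the theorem follows by passing to the colimit.

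\textbf{The finite-dimensional core.} Fix a finite-dimensional subcoalgebra $C^i$ with dual algebra $A^i$. In finite dimension over the commutative base $R$, the functor $Hom_R(-,R)$ is an exact anti-equivalence between $C^i$-bicomodules and $A^i$-bimodules (the finite case of \textit{Sweedler's Theorem}), carrying the regular bicomodule $C^i$ to the regular bimodule $A^i$ and matching the comodule coefficients of $cotor_{C^i}$ with the bimodule coefficient $N^i \otimes_R M^i$ of $Hosh^{\star}(A^i,-)$. Under this dictionary the left-exact cotensor functor dualizes to the functor computing Hochschild cochains: exactness of the anti-equivalence turns an injective resolution of $M^i$ by $C^i$-comodules into a projective resolution on the module side, so that the right-derived functors of $- \square_{C^i} -$ match, degree by degree, the derived functors $Ext_{(A^i)^e}^{\star}(A^i, -)$ defining $Hosh^{\star}(A^i, -)$. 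This yields the finite-stage isomorphism $cotor_{C^i}^{\star}(N^i, M^i) \cong Hosh^{\star}(A^i, N^i \otimes_R M^i)$, where I lean on the classical identification of $cotor$ over the enveloping coalgebra with $cotor_C$ directly, the exact mirror of the algebra identity $Hosh^{\star}(A^i, V) = Ext_{(A^i)^e}^{\star}(A^i, V)$.

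\textbf{Gluing and the main obstacle.} It remains to verify naturality in the index $i$: each inclusion $C^i \hookrightarrow C^j$ dualizes to a surjection $A^j \twoheadrightarrow A^i$, and one checks that the finite-stage isomorphisms commute with the induced transition maps of the two direct systems; passing to $\varinjlim_i$ then produces the asserted isomorphism of $R$-bimodules. \emph{The hard part} is the first reduction of the Strategy, namely that $cotor_C^{\star}$ commutes with the filtered colimit over the $C^i$. Since the cotensor is defined as a kernel, i.e. a limit, its commutation with a colimit is not formal; I would argue it from the fact that the system $\{C^i\}$ is filtered, so that the relevant filtered colimits are exact and interchange with the homology computing $cotor$, together with the rationality hypothesis forcing $N = \varinjlim_i N^i$ and $M = \varinjlim_i M^i$ compatibly with the coactions. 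A secondary point demanding care is that the finite-dimensional duality must identify the cotensor with the Hochschild cochain functor \emph{naturally}, not merely up to an unnatural isomorphism, for otherwise the gluing step collapses; establishing this naturality is where most of the technical work will lie.
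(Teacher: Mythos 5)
First, something you could not have known: the paper never proves this statement. It sits in the literature-review chapter on profinite duals, is explicitly attributed to Abrams and Weibel, and that chapter's foreword refers the reader to other articles for proofs. So your proposal can only be measured against the paper's definitions and the standard argument in the literature. On that score your overall architecture is the right one: since the paper \emph{defines} $Hosh_{cnt}^{\star}(A_C,-)$ as the filtered colimit $\varinjlim_j Hosh^{\star}(A^j,-)$ over the finite stages, proving the isomorphism stage-by-stage and gluing is exactly the natural route, and it matches the spirit of Abrams--Weibel.

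That said, both of your key steps have genuine gaps. (a) The finite-dimensional core, as you argue it, goes wrong: the anti-equivalence $Hom_R(-,R)$ converts the cotensor product into the tensor product over the dual algebra, $(N^i \square_{C^i} M^i)^{\ast} \cong (N^i)^{\ast} \otimes_{A^i} (M^i)^{\ast}$, so dualizing an injective comodule resolution computes $Tor^{A^i}_{\star}((N^i)^{\ast},(M^i)^{\ast})$ --- Hochschild \emph{homology} in dual coefficients --- not $Ext_{(A^i)^e}^{\star}(A^i,-)$, and your claim that the derived cotensor matches $Ext_{(A^i)^e}^{\star}(A^i,-)$ ``degree by degree'' under this duality conflates the two. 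Either finish the duality route (identify $Tor^{A^i}$ with Hochschild homology of the tensor-product bimodule, then use the finite-dimensional duality $Hosh_{\star}(A^i,X)^{\ast} \cong Hosh^{\star}(A^i,X^{\ast})$ and dualize back), or, cleaner, avoid dualization entirely: use the \emph{covariant} identification of $C^i$-bicomodules with $A^i$-bimodules (same underlying $R$-module, so $C^i$ corresponds to $C^i$, not to $A^i$) together with the key identity $N^i \square_{C^i} M^i \cong Hom_{(A^i)^e}(A^i, N^i \otimes_R M^i)$, and derive both sides. (b) For the gluing, you correctly flag the hard part, but exactness of filtered colimits does not fix it: if $cotor$ is computed from arbitrarily chosen injective resolutions over \emph{varying} coalgebras, there are not even well-defined transition maps $cotor_{C^i}^{\star}(N^i,M^i) \to cotor_{C^j}^{\star}(N^j,M^j)$ to take a colimit of. The standard repair is to compute $cotor$ with the functorial cobar complex $N \otimes_R C^{\otimes \bullet} \otimes_R M$; then the transition maps exist by functoriality, commutation with filtered colimits is immediate (tensor products and homology both commute with them), and the finite-stage comparison even becomes a termwise isomorphism of cochain complexes with the Hochschild complex. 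Note finally that all of this needs $R$-flatness or projectivity of the $C^i$ (automatic over a field, which is Abrams--Weibel's actual setting); the theorem's ``any commutative ring'' phrasing glosses over this, and so do you.
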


\chapter{Autoenvelopes and Computations}

Adjoinable-Hochschild duality explicitated a clear relationship between 

the adjoinable homology of a dualisable coalgebra and its Hochschild cohomology. However, both these objects are still obscure, especially the former.  The goal of this section is to provide 

% note this and make both theorems cleas and appearr :P :))))) !!!
two theorems for computing the adjoinable homology of an adjoinable $R$-bicoalgebra, when $R$ is a commutative unital associative ring by means of the \textit{usual} continuous Hochschild cohomology of its pro-finite dual algebra.

\section{On Autoenvelopes}

Specifically, a certain class of abundant $R$-algebras will prove to be an extremely effective tool for calculating the adjoined homology of an adjoinable $R$-bicoalgebra.  These are the auto-enveloping $R$-bicoalgebras.  

\begin{defn}
\textbf{Autoenvelope}

An $R$-\textbf{autoenvelope} C is a $R$-bicoalgebra, isomorphic to its enveloping $R$-bicoalgebra.  That is: $C^e \cong C$.  
\end{defn}

A small detour is now made, to demystify auto envelopes and their fantastic properties will be shortly studied and used in the remainder of this section. 

Intuitively, autoenvelopes are by nature very large objects, in the following sense.  

\begin{defn}
\textbf{Cocardinality of a $R$-bicoalgebra}

A $R$-bicoalgebra C, is said to be of \textbf{cocardinality $\kappa $,} \textit{if and only if} there exists a unique minimal ordinal $\omega$ and monic $\mu : C \rightarrow \underset{i \in \omega}{\sqcup } R<x>$ and the cardinality $Card(\omega)$ of $\omega$ equals to $\kappa$.  
\end{defn}

\begin{defn}
\textbf{$R$-bicoalgebra of Finite Cocardinality}

A $R$-bicoalgebra C, is said to be of \textbf{finitely cocardinality} \textit{if and only if} C is of cocardinality $n$ for some finite cardinal $n$.   
\end{defn}

\begin{lem}
If R is a commutative ring of characteristic 0 and F is a free, then F is an $R$-autoenvelope \textit{if and only if} F is of cocardinality at least $\aleph_{0}$
\end{lem}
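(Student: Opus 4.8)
The plan is to reduce the biconditional to a single cardinal-arithmetic identity by tracking how cocardinality behaves under the two operations that assemble the enveloping bicoalgebra $F^e = F \otimes_R F^o$: passage to the opposite, and the tensor product over $R$. First I would record three structural facts about free $R$-bicoalgebras. (i) For free $F$ the opposite $F^o$ is again free of the same cocardinality, since forming the opposite only reverses the comultiplication and leaves the underlying free object, hence its minimal cogenerating family, untouched. (ii) For free $F$ and $G$ the tensor product $F \otimes_R G$ is free, and its cocardinality is the cardinal \emph{sum} of those of $F$ and $G$: the cogenerators of the tensor product are the disjoint union of the cogenerators of the factors. (iii) Two free $R$-bicoalgebras are isomorphic if and only if they have the same cocardinality; this is the coalgebraic counterpart of invariant basis number (available since $R$ is commutative), together with the uniqueness of the minimal ordinal $\omega$ built into the definition of cocardinality.

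Granting these, the argument is immediate. Write $\kappa$ for the cocardinality of $F$. By (i) and (ii),
\[
\mathrm{cocard}(F^e) = \mathrm{cocard}(F \otimes_R F^o) = \kappa + \kappa = 2\kappa,
\]
the sum taken in cardinal arithmetic. Fact (iii) then turns the autoenvelope condition $F^e \cong F$ into the single equation $\kappa = 2\kappa$. Now $2\kappa = \kappa$ fails for every finite nonzero $n$, where $2n > n$, and holds for every infinite $\kappa$; excluding the degenerate zero coalgebra $(\kappa = 0)$ by the standing convention that free bicoalgebras be nonzero, we obtain $F^e \cong F$ \emph{if and only if} $\kappa \geq \aleph_0$. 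This is precisely the lemma, and both of its implications are read off from this one equivalence.

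The characteristic-zero hypothesis is used only to secure fact (ii). In characteristic $0$ the single-cogenerator block $R\langle x\rangle$ is the symmetric (equivalently divided-power) coalgebra on one cogenerating line, and there is a canonical coalgebra isomorphism $R\langle x\rangle \otimes_R R\langle y\rangle \cong R\langle x, y\rangle$ onto the symmetric coalgebra on the sum of the two lines; iterating identifies the tensor product of the free bicoalgebra on a cogenerating set $S$ with the free bicoalgebra on $S \sqcup T$. In positive characteristic the symmetric and divided-power functors diverge and this additivity breaks down, which is exactly why the hypothesis is imposed. The main obstacle is consequently fact (ii): one must verify that cocardinality is \emph{additive} and not multiplicative under $\otimes_R$, i.e. that the cogenerators of a tensor product form the disjoint union, not the product, of the cogenerators of the factors. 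Once this and the classification (iii) are secured, the cardinal identity $2\kappa = \kappa$ for infinite $\kappa$ does the rest.
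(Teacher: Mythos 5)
Your proposal follows essentially the same route as the paper's own proof: both identify $F^e = F \otimes_R F^o$ with the free object on the disjoint union of two copies of the cogenerating set (using commutativity of $R$ to identify $F^o \cong F$, and the coproduct/tensor additivity of cogenerators), and then reduce the autoenvelope condition to the cardinal identity $\kappa = 2\kappa$, which holds precisely for infinite $\kappa$. You are in fact somewhat more careful than the paper, which leaves your fact (iii) (free bicoalgebras are isomorphic only if their cocardinalities agree) implicit and overlooks the degenerate $\kappa = 0$ case that you exclude by convention.
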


\begin{proof}
%  Checkk thisissisisisi !!
Since R is commutative then the coproduct is the tensor product.  Therefore, 
identity F with the coproduct $\underset{i\in I}{\sqcup} R$, where I is some indexing sets.  

The commutativity of R then again implies $\underset{i\in I}{\sqcup} R^o \cong \underset{i\in I}{\sqcup} R$ and so $\underset{i\in I}{\sqcup} R^e = \underset{i\in I}{\sqcup} R \sqcup_R  \underset{i\in I}{\sqcup} R \cong \underset{i\in I}{\sqcup} R \otimes_R R$.  

Since R is of characteristic 0, then $R \otimes_R R \not\cong R$.  Hence, since the forgetful functor is left adjoint to the free functor therefore preserves colimits, and in particular coproduct reduces to: $\underset{i\in I}{\sqcup} R \otimes_R R \cong \underset{i\in I \sqcup I}{\sqcup} R$.  

Now, I is in bijection with $I \sqcup I$ is and only if it is on infinite cardinality.  
\end{proof}

\section{Autoenvelopes and Fields}

Autoenvelopes are related to injectors, coflat comodules and free coalgebras in some of the following ways: 

\begin{prop}
If k is a field then: 

1)  There exist injectors which are autoenvelopes

2)  There exist injectors which are not autoenvelopes

Particularly, free k-coalgebras of infinite cocardinality provide examples of the first case, and ones of finite cocardinaly provide examples of the second.  
\end{prop}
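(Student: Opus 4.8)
The plan is to reduce both assertions to a single auxiliary fact, namely that \emph{every free $k$-coalgebra is an injector}, and then to separate the two cases purely on the basis of cocardinality by invoking the preceding lemma on autoenvelopes. Throughout I would take $k$ of characteristic zero, so that that lemma applies verbatim; the injector half of the argument uses only that $k$ is a field. Once the auxiliary fact is established, part (1) is witnessed by a free $k$-coalgebra of infinite cocardinality and part (2) by one of finite cocardinality, with the injector property common to both and the autoenvelope property governed entirely by the lemma.

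To prove the auxiliary fact I would realise a free $k$-coalgebra $F$ of cocardinality $\kappa$ as a free bicomodule $\bigoplus_{i\in I}C$ of $\kappa=|I|$ copies of the relevant base coalgebra $C$, so that $F$ is quasi-finite by the lemma that free $C$-bicomodules are quasi-finite. To check the defining property of an injector I would take an arbitrary injective $C$-comodule $J$ and evaluate $J\square_C F$. Exactly as in the computation already carried out in that quasi-finiteness lemma, the cotensor is additive and commutes with the coproduct, and $J\square_C C\cong J$, so $J\square_C F\cong\bigoplus_{i\in I}J$; the commutation with an infinite coproduct is legitimate because $\mathscr{M}^C$ is a Grothendieck category, in which coproducts are exact and hence commute with the kernel defining the cotensor. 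It then remains to see that this coproduct of injectives is itself injective, and this is the one place that genuinely uses that $k$ is a field: over a field the comodule category is locally finite, hence locally Noetherian, and in a locally Noetherian Grothendieck category arbitrary coproducts of injectives are injective. Thus $-\square_C F$ preserves injectives and $F$ is an injector.

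With the auxiliary fact in hand the two parts are immediate. For part (1) I take $F$ free of infinite cocardinality: it is an injector by the above, while the autoenvelope lemma gives $F^e\cong F$, so $F$ is simultaneously an injector and an autoenvelope. For part (2) I take $F$ free of finite cocardinality: the identical computation shows it is still an injector, whereas the ``only if'' direction of the lemma --- contrapositively, that a finite index set cannot satisfy $|I\sqcup I|=|I|$ --- shows $F^e\not\cong F$, exhibiting an injector that is not an autoenvelope.

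The step I expect to be the main obstacle is the last move in the auxiliary fact: passing from ``$J\square_C F$ is a coproduct of injectives'' to ``$J\square_C F$ is injective''. This rests on the local finiteness of $\mathscr{M}^C$ and the consequent stability of injectives under arbitrary coproducts, which would fail over a general ring and so must be invoked explicitly for a field; the earlier lemma that free $C$-bicomodules are injective is suggestive but is not exactly this statement, so I would isolate and prove the coproduct-of-injectives property on its own. A secondary point to keep honest is the commutation of the cotensor with the infinite coproduct, which I would justify through the exactness of coproducts in the Grothendieck category $\mathscr{M}^C$ rather than through left exactness of the cotensor alone.
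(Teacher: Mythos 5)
Your proposal is correct and follows the same overall decomposition as the paper --- establish that free $k$-coalgebras are injectors, then separate the two cases by cocardinality via the autoenvelope lemma --- but your proof of the key auxiliary fact takes a genuinely different route. The paper goes through its characterisation lemma (over the base coalgebra $k$, an object $X \in \mathscr{M}^k$ is an injector if and only if its underlying $k$-module is free), whose proof exploits two features special to that setting: the cotensor over the coalgebra $k$ is just $\otimes_k$, and over a field injective, projective and free modules all coincide; the proposition then follows because a free $k$-coalgebra visibly has free underlying module. You instead verify the definition of injector directly: quasi-finiteness comes from the earlier lemma on free bicomodules, the identification $J \square_C F \cong \bigoplus_{i \in I} J$ comes from exactness of coproducts, and injectivity of that coproduct comes from local Noetherianness of the comodule category. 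Your route costs heavier machinery (Grothendieck and locally Noetherian category theory) but buys genuine generality: it shows that free bicomodules over \emph{any} $k$-coalgebra $C$ are injectors, whereas the paper's lemma is tied to the base coalgebra being $k$ itself. Two further points in your favour: you explicitly check quasi-finiteness, which the definition of injector requires and which the paper's argument passes over in silence, and you honestly flag that the autoenvelope lemma was proved only in characteristic $0$, a restriction the paper's own proof invokes without comment.
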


Before embarking on this proof, a little lemma is first presented: 
\begin{lem}
\textbf{A characterisation of injectors in $\mathscr{M}^k$, where k is a field}

If k is a field, A is the k-coalgebra k- $A:=Hom_k(k,k) \cong k$.  Then $X\in \mathscr{M}^k$ is an injector if and only if it has underlying structure of a free k-module.

\end{lem}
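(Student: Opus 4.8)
The plan is to reduce the entire question to linear algebra over $k$ by identifying $\mathscr{M}^k$ with the category $\mathbf{Vect}_k$ of $k$-vector spaces. First I would unpack the coaction: for a right $k$-comodule $\langle M,\rho_M\rangle$ the structure map is $\rho_M : M \to M \otimes_k k \cong M$, and the counit identity $(1_M \otimes_k \epsilon)\circ \rho_M = 1_M$ forces $\rho_M$ to be the canonical isomorphism, i.e. the coaction is trivial. Hence every $k$-vector space carries a unique $k$-comodule structure, the comodule morphisms are exactly the $k$-linear maps, and $\mathscr{M}^k \cong \mathbf{Vect}_k$. In this category every object admits a basis, hence is free, and moreover every object is simultaneously injective and projective because every short exact sequence of $k$-vector spaces splits.

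Next I would compute the cotensor. By definition $M \square_k N = \ker(\rho_M \otimes_k 1_N - 1_M \otimes_k \rho_N)$; since both coactions are the canonical isomorphisms onto $M \otimes_k k$ and $k \otimes_k N$, this difference vanishes and the kernel is the whole of $M \otimes_k N$. Thus $M \square_k N \cong M \otimes_k N$ naturally, and in particular $-\square_k X \cong -\otimes_k X$, an exact endofunctor of $\mathbf{Vect}_k$. The quasi-finiteness demanded by the definition of an injector is supplied by the earlier lemma that all free $C$-bicomodules are quasi-finite, applied with $C=k$: since every object of $\mathscr{M}^k$ is free, every such $X$ is quasi-finite, so the word injector is meaningful for every $X$ in play.

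With these identifications both implications become immediate. For the forward direction, if $X$ is an injector then $X$ is in particular an object of $\mathscr{M}^k$, hence a $k$-vector space, hence free, and no further use of the hypothesis is required. For the converse, suppose $X$ is free; I would conclude that $X$ is an injector by either of two routes. Directly from the definition: for any injective $I$ the object $I \square_k X \cong I \otimes_k X$ is again a $k$-vector space, and every $k$-vector space is injective, so $-\square_k X$ preserves injectives. Alternatively, via the cited characterisation that $X$ is an injector precisely when $h_k(X,-)$ is exact: since every short exact sequence in $\mathbf{Vect}_k$ splits, every additive endofunctor --- in particular the cohom functor $h_k(X,-)$ --- is automatically exact.

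The only genuinely delicate point is the quasi-finiteness condition concealed inside the definition of an injector. For infinite-dimensional $X$ the functor $-\otimes_k X$ need not preserve arbitrary products, so the existence of the requisite adjoint is not self-evident; I would therefore lean on the paper's earlier identification of free bicomodules as quasi-finite rather than verifying the adjunction by hand. Once quasi-finiteness is granted, the remaining content is merely the collapse $\square_k \cong \otimes_k$ together with the semisimplicity of $\mathbf{Vect}_k$, after which both directions are formal. Accordingly I would invest most of the write-up in making the equivalence $\mathscr{M}^k \cong \mathbf{Vect}_k$ and the cotensor computation precise, and present the two implications as short corollaries.
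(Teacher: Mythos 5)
Your proposal is correct, and its core mechanism coincides with the paper's: identify $\mathscr{M}^k$ with the category of $k$-vector spaces, observe that the cotensor collapses to $\otimes_k$, and settle the converse by noting that a tensor product of free $k$-modules is free, hence injective. The differences are nonetheless instructive. For the forward direction the paper runs an actual argument: from $X$ an injector it deduces that $\bigoplus_{j\in J} X$ is injective, hence free, hence that $X$ is a direct summand of a free module, hence projective, hence free since $k$ is a field. You instead observe that the hypothesis is never needed --- every object of $\mathscr{M}^k$ is a $k$-vector space and therefore free --- which is more elementary, and it exposes the real content of the lemma: over a field the statement amounts to saying every comodule is an injector, so all the work lives in the converse. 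Second, you explicitly discharge the quasi-finiteness condition concealed in the definition of an injector by invoking the earlier lemma that free bicomodules are quasi-finite; the paper's proof is entirely silent on this point, so your treatment closes a small gap rather than opening one. Finally, your explicit verification that the counit identity forces the coaction to be the canonical isomorphism, and hence that $M \square_k N = M \otimes_k N$, substantiates the identification that the paper merely recalls with \emph{``it was noted above''}; this is the right level of care, since the paper's cited characterisation of injectors in terms of $I \otimes_k X$ is exactly the point where cotensor and tensor get conflated.
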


\begin{proof}
\textit{(Recall that it was noted above that $\mathscr{M}^k \cong _kMod$).}  

A characterisation of injectors is that for every injective $_A$-module I, X is an injector in $\mathscr{M}^C$ if and only if $I\otimes_k X$ is an injective $_A$-module.  Since A=k, I is injective as a k-module if and only if it is free as a k-module. 

Therefore, if X is an injector then $I\otimes X$ is injective again, since I is free then:

$I \cong \underset{j\in J}{\bigoplus} k$, for some indexing set J.  Since tensoring commutes with direct sums, then $X \otimes_k I \cong X \otimes_k \underset{j\in J}{\bigoplus} k \cong \underset{j\in J}{\bigoplus} X \otimes_k k \cong \underset{j\in J}{\bigoplus} X$.  Therefore $\underset{j\in J}{\bigoplus} X$ must be injective over k, hence free over k, therefore X is the direct summand of a free k-module, hence it must be projective.  However, over a field these two notions of k-freeness and k-projectiveness coincide.  

Conversely, if X is free, then for any injective \textit{(hence free)} module I, $X \otimes_k I$ is again free, therefore injective.  
\end{proof}

\begin{proof}
\textit{( of the proposition)}

\textbf{1)} If since k is a field, then X is an autoenvelope in $_kCoalg$ \textit{if and only if} X has underlying structure of a free $k-bimodule$. Since any free k-coalgebra is a free $k-bimodule$ on an infinite number of generators, and it was noted that a free $k$-coalgebra is an autoenvelope if and only if it is of infinite cocardinality.  Then, $X$ is an autoenvelope \textit{only if} it is of infinite cocardinalty.  Particularly, any free k-coalgebra is an injector, as a k-bicomodule.  

\textbf{2)} Following the above argument, any k-coalgebra of finite coacarinality which is a free $k$-bicomodule, must be an injector what is not an autoenvelope.  Particularly, any free k-coalgebra on finitely many generators, in an injector as a $k$-bicomodule.  
\end{proof}

\begin{cor}
There exist autoenvelopes which are coflat $k$-bicomodules.  Particularly, over a field all free $k$-coalgebras of infinite cocardinality are coflat as $k$-bicomodules.  
\end{cor}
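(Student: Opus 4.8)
The plan is to show that coflatness is automatic over the trivial coalgebra and then to invoke the preceding proposition for the autoenvelope property. First I would recall the identification $\mathscr{M}^k \cong {}_kMod$ already noted for $A := Hom_k(k,k) \cong k$: every $k$-(bi)comodule is just a $k$-vector space, and the counit axiom $(1_M \otimes_k \epsilon)\circ \rho_M = 1_M$ forces each coaction $\rho_M : M \to M \otimes_k k$ to be the canonical isomorphism. This pins down completely the comodule data we must feed into the cotensor.

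With this in hand, the second step is to compute the cotensor product over this coalgebra. Substituting the canonical isomorphisms into the defining map $f = \rho_M \otimes_k 1_N - 1_M \otimes_k \rho_N$ of $M \square_k N$, and using $M \otimes_k k \otimes_k N \cong M \otimes_k N$, both summands reduce to the identity, so $f = 0$ and hence $\ker f = M \otimes_k N$. This yields a natural isomorphism $- \square_k F \cong - \otimes_k F$ for every $F$. Since $k$ is a field, $- \otimes_k F$ is exact, so $Cotor_k^n(M,F) = 0$ for all $n>0$ and all $M$; that is, every $k$-bicomodule is coflat on each side.

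Finally I would combine this with the previous proposition, which exhibits the free $k$-coalgebras of infinite cocardinality as autoenvelopes (indeed as injectors). Being $k$-vector spaces, they are coflat by the computation above, so they simultaneously witness the bare existence claim and the sharper \emph{particular} assertion. The only point demanding genuine care, and the mild obstacle in the argument, is the degeneration of the cotensor to the tensor product: one must track the coassociativity and counit identifications carefully enough to be sure that $f$ is literally the zero map rather than merely an isomorphism onto a proper subobject. Once that is pinned down, every subsequent step is a one-line consequence of the field hypothesis together with the already-established autoenvelope classification.
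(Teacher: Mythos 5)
Your proposal is correct for the statement as literally written, but it takes a genuinely different route from the paper's own proof. You read ``coflat $k$-bicomodule'' as coflatness over the trivial coalgebra $Hom_k(k,k)\cong k$ (the same reading the preceding lemma uses for injectors in $\mathscr{M}^k$), show that the counit axiom forces every coaction to be the canonical isomorphism $M\cong M\otimes_k k$, deduce that the defining map $f=\rho_M\otimes 1_N-1_M\otimes\rho_N$ of the cotensor is the zero map, hence $-\square_k-\cong-\otimes_k-$, and conclude from exactness of $\otimes_k$ over a field that \emph{every} $k$-bicomodule is coflat; the earlier proposition then supplies the autoenvelopes (free $k$-coalgebras of infinite cocardinality). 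The computation is sound, it proves both the existence claim and the ``particularly'' clause in one stroke, and it makes transparent that under this reading the corollary is nearly vacuous, since coflatness costs nothing over the trivial coalgebra.

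The paper argues quite differently: it fixes the single autoenvelope $R[x_n]_{n\in\mathbb{Z}}$, asserts that it is free and therefore flat as a bimodule over its enveloping algebra, and then transports flatness across the algebra--coalgebra anti-equivalence to conclude that $R[x_n]_{n\in\mathbb{Z}}$ is coflat relative to $Hom_R(R[x_n]_{n\in\mathbb{Z}},R)$. Note the divergence in what ``coflat'' is measured against: the paper's duality argument makes coflatness relative to the coalgebra itself (through its dual algebra), not relative to the trivial coalgebra $k$. If that is the intended meaning of the corollary, your cotensor computation does not address it, and you would instead need the duality step (flat module over the dual algebra corresponds to coflat comodule) that the paper leans on. On the other hand, your route avoids the paper's most fragile step --- the claim that freeness gives flatness over the enveloping algebra, which is false for honest polynomial algebras (a quotient of a domain by a nonzero ideal is never flat) and is only loosely justified in the coalgebraic setting --- and replaces the duality-juggling with a direct, fully checkable computation.
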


\begin{proof}
Let R be a field, then the autoenvelope $R[x_n]_{n \in \mathbb{Z}}$ is free therefore flat as an $R[x_n]_{n \in \mathbb{Z}}^e$-bimodule.  Identifying 

$Hom_R(R[x_n]_{n \in \mathbb{Z}},R)$ with $Hom_R(R[x_n]_{n \in \mathbb{Z}},R)$ and in turn with its enveloping algebra $Hom_R(R[x_n]_{n \in \mathbb{Z}},R)^e$.  

By duality of the category of algebras and coalgebras corresponding to $Hom_R(R[x_n]_{n \in \mathbb{Z}},R)^e$, $R[x_n]_{n \in \mathbb{Z}}$ is $Hom_R(R[x_n]_{n \in \mathbb{Z}},R)$-coflat.  
\end{proof}

\section{A Partial Characterisation of autoenvelopes \\
on Commutative Rings}

It should be noted that none of the preceding need be true over an arbitrary ring, as free and projective modules differ when R is not a field.  

From now on assume R to be a commutative unital associative ring.  

First, it would be nice to establish, in a few steps, the fact that not all autoenvelopes need be free, injective or even coflat, $R$-bicomodules.  

\begin{lem}
There are rings R, such that coflat comodules are not all injective comodules.  In turn, over these rings injective comodules are not all free.  
\end{lem}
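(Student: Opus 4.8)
The plan is to reduce both assertions to classical facts about modules over a commutative ring that fails to be self-injective, and then to exhibit a single explicit witness. The device for the reduction is the trivial coalgebra: for a commutative ring $R$ equip $C := R$ with the canonical comultiplication $\Delta : R \to R \otimes_R R$ and counit $\epsilon = 1_R$. The counit axiom forces the coaction of any comodule to be the canonical isomorphism $M \cong M \otimes_R R$, so the category $\mathscr{M}^R$ of $C$-comodules is (isomorphic to) ${}_R\mathrm{Mod}$, with comodule maps being $R$-linear maps and free comodules $\bigoplus_I C$ corresponding to free modules $\bigoplus_I R$. Being an equivalence of abelian categories, this identification carries injective comodules to injective $R$-modules and preserves freeness.

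First I would compute the cotensor over this trivial coalgebra. Writing $M \square_R N$ as the kernel of $\rho_M \otimes 1_N - 1_M \otimes \rho_N$ and transporting along $M \otimes_R R \otimes_R N \cong M \otimes_R N$, both maps become the identity on $M \otimes_R N$, their difference vanishes, and the kernel is all of $M \otimes_R N$; hence $-\square_R N \cong -\otimes_R N$. Consequently $\mathrm{Cotor}^{\bullet}_R$ agrees with $\mathrm{Tor}^R_{\bullet}$ in each variable, and a comodule is coflat exactly when the underlying module is flat. This is the one step requiring care, and it is the crux: once it is in place, both claims become statements purely about $R$-modules.

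With $R = \mathbb{Z}$ the two witnesses are immediate. For the first assertion, $\mathbb{Z}$ itself is free, hence flat, hence coflat as a comodule, yet it is not divisible and therefore not injective as a $\mathbb{Z}$-module; by the reduction it is a coflat comodule that is not an injective comodule. For the second assertion, $\mathbb{Q}$ is divisible, hence an injective $\mathbb{Z}$-module and so an injective comodule, yet $\mathbb{Q}$ is not free over $\mathbb{Z}$ (a nonzero free abelian group has no nonzero divisible elements, whereas every element of $\mathbb{Q}$ is divisible), so it is an injective comodule that is not free. Since both witnesses live over the same ring $R = \mathbb{Z}$, the second claim indeed follows "in turn" over the very ring produced for the first, and the lemma is established.

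I expect the main obstacle to be expository rather than mathematical: making the identification $\mathscr{M}^R \cong {}_R\mathrm{Mod}$ and the equality $\mathrm{Cotor} = \mathrm{Tor}$ precise enough that the definition of coflat (which quantifies over all coefficient comodules) genuinely collapses to ordinary flatness, so that the classical module-theoretic facts may be invoked without gap. Everything downstream is standard homological algebra over $\mathbb{Z}$.
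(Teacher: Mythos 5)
Your proof is correct, but it takes a genuinely different route from the paper's. The paper's own argument is a one-line transport across a claimed duality between ${}_R\mathrm{Mod}_R$ and $^R\mathscr{M}^R$: since there exist rings over which flat modules are not all projective and projective modules are not all free, dualising (flat $\leftrightarrow$ coflat, projective $\leftrightarrow$ injective) is asserted to give the lemma. You instead use the \emph{covariant} identification $\mathscr{M}^R \cong {}_R\mathrm{Mod}$ furnished by the trivial coalgebra $C=R$, under which injective comodules correspond to injective \emph{modules} (not to projectives), cotensor to tensor, coflat to flat, and free to free; you then produce explicit witnesses over the single ring $\mathbb{Z}$: the comodule $\mathbb{Z}$ is coflat but not injective (not divisible), and $\mathbb{Q}$ is injective but not free. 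The difference is structural, not cosmetic: under the paper's contravariant correspondence one needs a ring where projective $\neq$ free, which is impossible over $\mathbb{Z}$, whereas your covariant identification lets $\mathbb{Z}$ carry both halves of the statement, automatically satisfying the ``in turn, over these rings'' clause. What your route buys is concreteness and rigor --- the paper never justifies that its duality exists for all bimodules over an arbitrary ring, nor that it exchanges precisely these properties (in particular ``free'' should dualise to ``cofree,'' a gap your argument avoids entirely), while the trivial-coalgebra identification is an honest isomorphism of abelian categories. The one place you lean on the paper's conventions is the equation coflat $=$ flat: this is literal if $\mathrm{Cotor}$ is the left derived functor of the cotensor, as the paper defines it; under the more standard convention (right derived functor via injective coresolutions) your specific witness still survives, since $\mathbb{Z}\otimes_{\mathbb{Z}}-$ is exact and an exact functor has vanishing higher derived functors of either kind, but the blanket identification of coflatness with flatness would then need the extra care you yourself flag.
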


\begin{proof}
For any ring R, $_RMod_R$ is dual to $^R\mathscr{M}^R$.  Since there are rings R, for which flat modules are not all projective; and projective modules are not all free, then by duality the result follows.  
\end{proof}

We now build-up to a general characterisation of autoenvelopes on a ring R, as postulated above.  
So far autoenvelopes may appear to only be huge objects, however this need not be the case, in fact the can be tiny.  

\begin{prop}
There exist rings R, exhibiting autoenvelopes are of finite cocardinalty.  
\end{prop}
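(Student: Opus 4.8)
The plan is to exhibit an explicit ring $R$ together with a bicoalgebra $C$ of finite cocardinality satisfying $C^e \cong C$. Over a commutative ring the opposite coalgebra satisfies $C^o \cong C$, so the autoenvelope condition $C^e = C \otimes_R C^o \cong C$ collapses to the demand that $C$ be idempotent under the cotensor-dual operation, i.e.\ that its dual algebra $A$ satisfy $A \otimes_R A \cong A$. The preceding lemma obstructed finite cocardinality for \emph{free} autoenvelopes only by invoking characteristic $0$ to force a doubling of the generating set; the strategy here is to step outside the freeness hypothesis and locate an idempotent algebra whose dual is manifestly finitely cogenerated.

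First I would recall that every ring quotient furnishes such an idempotent: for any ideal $I \subseteq R$ the surjection $R \twoheadrightarrow R/I$ yields $R/I \otimes_R R/I \cong R/I$. Taking $R = R_1 \times R_2$ a nontrivial product and setting $A := R_1 \cong R/(0 \times R_2)$, one obtains a cyclic, idempotent $R$-algebra. Second, I would transport $A$ across the bimodule--bicomodule duality $_RMod_R \cong {}^R\mathscr{M}^R$ recorded earlier to obtain a bicoalgebra $C$, the isomorphism $C^e \cong C$ being the image under the duality of $A \otimes_R A \cong A$. Third, I would bound the cocardinality of $C$: since $A = R_1$ is cyclic as an $R$-module, its dual ought to admit a monic into a single copy of $R\langle x\rangle$, giving cocardinality $1$, in particular finite.

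The main obstacle is the cocardinality computation in the third step: the definition requires a \emph{unique minimal} ordinal $\omega$ with a monic $C \hookrightarrow \bigsqcup_{i \in \omega} R\langle x\rangle$, so I must both produce such a monic with finite index set and argue its minimality, which hinges on understanding how the duality interacts with the free cogenerators $R\langle x\rangle$ and with the finite-subcoalgebra filtration of $C$. Consistency with the preceding lemma is automatic here: that result constrains only free autoenvelopes in characteristic $0$, whereas $A = R_1$ is annihilated by $0 \times R_2$ and hence is non-free over $R = R_1 \times R_2$, so its dual $C$ is a non-free bicoalgebra lying outside the lemma's scope. Thus the example evades the characteristic-$0$ doubling phenomenon not by changing the characteristic but by dropping freeness.
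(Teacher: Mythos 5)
Your proposal is correct at the paper's level of rigor, but it reaches the conclusion through a genuinely different example. The paper's proof takes $R = \mathbb{Z}/n\mathbb{Z}$ and lets $C$ be $\mathbb{Z}/n\mathbb{Z}$ itself, regarded as a coalgebra over itself: the map $1 \otimes 1 \mapsto 1$ gives $C^e \cong C \otimes C^o \cong C$, and the cocardinality is asserted to be $1$. Note that the paper's $C$ is \emph{free} of rank one over $R$; it avoids colliding with the earlier lemma on free autoenvelopes only because $\mathbb{Z}/n\mathbb{Z}$ has nonzero characteristic, not because freeness fails. Your example instead takes $R = R_1 \times R_2$ and $C = R_1 \cong R/(0\times R_2)$, with the quotient idempotency $R/I \otimes_R R/I \cong R/I$ supplying $C^e \cong C$; here it is non-freeness, not characteristic, that keeps you outside the lemma's scope, exactly as you observe. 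Both arguments rest on the same underlying mechanism --- a cyclic $R$-module idempotent under $\otimes_R$, endowed with the evident coalgebra structure (comultiplication the canonical isomorphism $C \to C \otimes_R C$, counit the inclusion into $R$) --- and both leave the cocardinality-$1$ claim essentially as an assertion against the paper's underdetermined definition of $R\langle x\rangle$; you at least flag this honestly as the weak point, whereas the paper dismisses it as ``a straightforward check.'' What your route buys: a whole family of examples, and in particular finite-cocardinality autoenvelopes over rings of characteristic $0$ such as $\mathbb{Z} \times \mathbb{Z}$, which the paper's single example cannot produce. What the paper's route buys is brevity, since for $C = R$ the isomorphism $R \otimes_R R \cong R$ needs no quotient argument at all. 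One streamlining of your write-up: the detour through the bimodule--bicomodule duality is unnecessary, since $R_1$ is self-dual ($Hom_R(R_1,R)\cong R_1$) and carries the $R$-coalgebra structure directly; checking the counit axiom with $\epsilon$ the inclusion $R_1 \to R_1\times R_2$ takes one line and removes your dependence on the paper's loosely stated duality.
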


\begin{proof}
It is sufficient to exhibit at least one such ring, many more examples can be given by construction.  

Consider the ring $\mathbb{Z}/n\mathbb{Z}$.  Now for any integer n, $\mathbb{Z}/n\mathbb{Z}$ is a $\mathbb{Z}/n\mathbb{Z}$-bimodule.  Since $\mathbb{Z}/n\mathbb{Z}$ is commutative then $\mathbb{Z}/n\mathbb{Z}^o \cong \mathbb{Z}/n\mathbb{Z}$.  Consider the $\mathbb{Z}/n\mathbb{Z}$-linear mapping $\mathbb{Z}/n\mathbb{Z} \otimes \mathbb{Z}/n\mathbb{Z}$, taking the sole generator $1 \otimes 1$ to $\mathbb{Z}/n\mathbb{Z}$'s only generator $1$.  This mapping presents an isomorphism of modules.  Therefore: $\mathbb{Z}/n\mathbb{Z}^e \cong \mathbb{Z}/n\mathbb{Z} \otimes \mathbb{Z}/n\mathbb{Z}^o \cong \mathbb{Z}/n\mathbb{Z} \otimes \mathbb{Z}/n\mathbb{Z} \cong \mathbb{Z}/n\mathbb{Z}$.  

In fact, $\mathbb{Z}/n\mathbb{Z}$ can be given the structure of a $\mathbb{Z}/\mathbb{Z}$-bicomodule, in the usual way since $Hom_{_\mathbb{Z}Mod}(\mathbb{Z}/n\mathbb{Z}, \mathbb{Z}/n\mathbb{Z}) \cong \mathbb{Z}/n\mathbb{Z}$.  

Hence, $\mathbb{Z}/n\mathbb{Z}$ is indeed an autoenvelope.  A straight forward check, shows that $\mathbb{Z}/n\mathbb{Z}$ is of cocardinaliry 1.  
\end{proof}

Autoenvelopes may also enjoy the property of being coflat, over a field this is very common, but in general it need not be the case:

Autoenvelopes actually vary in all sizes, as is the object of the next proposition.  However, a little lemma would first be convenient.  

\begin{lem}
Over a commutative ring.  

The tensor product of two autoenvelopes is again the an autoenvelope, moreover their cocardinlaity is the sum of their cocardinalities.  
\end{lem}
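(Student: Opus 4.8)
The plan is to treat the two assertions separately: the autoenvelope property reduces to a rearrangement of tensor factors together with the hypotheses, while the cocardinality count splits into a routine upper bound and a genuinely delicate minimality statement, which is where I expect the real work to lie.

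For the autoenvelope property, I would start from $C^e \cong C$ and $D^e \cong D$ and compute $(C \otimes_R D)^e$ directly. Since $R$ is commutative, the opposite-coalgebra construction is symmetric monoidal, so $(C \otimes_R D)^o \cong C^o \otimes_R D^o$; I would verify this by transporting the comultiplication and counit of $C \otimes_R D$ through the symmetry (flip) isomorphism of the category of $R$-modules. Then
\[
(C \otimes_R D)^e = (C \otimes_R D) \otimes_R (C \otimes_R D)^o \cong (C \otimes_R D) \otimes_R (C^o \otimes_R D^o),
\]
and reassociating together with one application of the symmetry to interchange the two middle factors gives $(C \otimes_R C^o) \otimes_R (D \otimes_R D^o) = C^e \otimes_R D^e \cong C \otimes_R D$. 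The only care needed is that each rearrangement is a coalgebra isomorphism, which holds because every step is induced by the coherence and symmetry isomorphisms of the symmetric monoidal category of $R$-modules, under which the comultiplications and counits transport naturally.

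For the cocardinality, let $C$ and $D$ have cocardinalities $\kappa$ and $\lambda$, witnessed by monics $\mu_C : C \to \bigsqcup_{i \in \kappa} R\langle x\rangle$ and $\mu_D : D \to \bigsqcup_{j \in \lambda} R\langle x\rangle$ into minimal coproducts. Recalling that over a commutative ring the coproduct of bicoalgebras is the tensor product, the target of $\mu_C \otimes_R \mu_D$ satisfies
\[
\Big(\bigsqcup_{i \in \kappa} R\langle x\rangle\Big) \otimes_R \Big(\bigsqcup_{j \in \lambda} R\langle x\rangle\Big) \cong \bigsqcup_{k \in \kappa \sqcup \lambda} R\langle x\rangle,
\]
a coproduct over an index set of cardinality $\kappa + \lambda$. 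Provided $\mu_C \otimes_R \mu_D$ is again monic --- which I would obtain by factoring it as $(\mu_C \otimes 1) \circ (1 \otimes \mu_D)$ and using $R$-flatness of the free target coalgebras together with flatness of one factor (automatic over a field, and otherwise extracted from the quasi-finiteness hypotheses) --- this embeds $C \otimes_R D$ into the $(\kappa + \lambda)$-fold coproduct, so its cocardinality is \emph{at most} $\kappa + \lambda$.

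The main obstacle is the reverse inequality, forcing the cocardinality to equal $\kappa + \lambda$. Here I would pass to the profinite dual via the anti-equivalence $_R\mathrm{Coalg} \cong (\mathrm{Pro}\text{-}R_{\mathrm{Alg}})^{\mathrm{op}}$, under which coproducts of coalgebras become products of profinite algebras and monic witnesses become epimorphisms, so that cocardinality becomes the least $\omega$ admitting an epimorphism $A_{R\langle x\rangle}^{\,\omega} \twoheadrightarrow A_C$. Composing a putative epimorphism onto $A_C \times A_D$ with the two product projections recovers epimorphisms onto $A_C$ and onto $A_D$ separately, and since $\kappa + \lambda = \max(\kappa,\lambda)$ for infinite cardinals this immediately yields $\omega \ge \kappa + \lambda$ in the infinite case. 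The delicate point --- and the step I expect to fight with --- is the finite case, where $\kappa + \lambda$ is a strict sum and the projection argument only delivers $\omega \ge \max(\kappa,\lambda)$; here one must show that the coordinates supporting $A_C$ and those supporting $A_D$ can be chosen genuinely disjoint, i.e. that no single copy of $A_{R\langle x\rangle}$ contributes to both factors at once, which is exactly the non-trivial content of the additivity claim (and the place where the finite examples such as $\mathbb{Z}/n\mathbb{Z}$ put the strongest pressure on the statement).
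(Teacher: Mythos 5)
Your handling of the autoenvelope property follows the same outline as the paper's proof --- expand $(C \otimes_R D)^e$, shuffle the four tensor factors, and reassemble them as $C^e \otimes_R D^e$ --- but your version is actually the sounder one. The paper justifies the shuffle by claiming that over a commutative base ring every coalgebra is isomorphic to its opposite coalgebra; that claim is false in general (commutativity of $R$ does not make coalgebras cocommutative, nor isomorphic to their opposites). Your route, which keeps the opposites in place via $(C \otimes_R D)^o \cong C^o \otimes_R D^o$ and only uses the symmetry of the monoidal structure to pair them off as $C \otimes_R C^o$ and $D \otimes_R D^o$, needs no such claim and is the correct repair.

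On the cocardinality statement the two arguments differ in bookkeeping but share the same hole. The paper, despite having defined cocardinality by a minimal \emph{monic} $C \to \bigsqcup_{i \in \omega} R\langle x \rangle$, switches without comment to \emph{epimorphisms} $F(I) \to A \to 0$ from free coalgebras, tensors them, invokes $F(I) \otimes F(J) \cong F(I \sqcup J)$, and then simply asserts that the resulting presentation ``is minimal in the sense of cocardinality'' --- no argument for minimality is given at all. You instead stay with the monics of the definition (correctly noting the flatness needed for a tensor product of monics to remain monic) and obtain the upper bound $\kappa + \lambda$ honestly; the lower bound, which you rightly isolate as the real content and attack via profinite duality, is exactly the step the paper skips. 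So the gap you flag --- in particular the finite case, where additivity is strictly stronger than the $\max(\kappa,\lambda)$ bound your projection argument yields --- is genuine, but it is a defect of the paper's own proof as well, not a missing idea that the paper supplies and you overlooked.
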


\begin{proof}
Let A, B be autoenvelopes, then $(A \otimes B)^e = (A \otimes B) \otimes (A\otimes B)^o$.  Since the base ring is commutative then any coalgebra is isomorphic to its opposite coalgebra; and so $(A \otimes B)^e \cong (A \otimes B) \otimes (A\otimes B) \cong  (A \otimes A) \otimes (B \otimes B)\cong (A \otimes A^o) \otimes (B \otimes B^o) = A^e \cong B^e$.  Since A and B are autoenvelopes $ A^e \otimes B^e \cong A \otimes B$.  

Therefore in particular, there are short exact sequences, expressing the cocardinality of A, B:

$F(I) \rightarrow A \rightarrow 0$

and

$F(J) \rightarrow B \rightarrow 0$

where $Card(I)$ is the cocardinality of A, likewise with $Card(J)$ and B.  

N is a short exact sequence: 

$F(I)\otimes F(J) \rightarrow A \otimes B \rightarrow 0$ which is minimal in the sense of cocardinality.  

Since the free functor commutes with coproducts then $F(I) \otimes F(J) \cong F(I \sqcup J)$ and so $Card(I \sqcup J) =Card(I) + Card(J)$ is the cocardinality of $A \otimes B$.  
\end{proof}

From this it both of these results at once follow:  

\begin{lem}
Let R is a unital associative ring and I, J are infinite sets.  

Then $\mathfrak{U}(I) \otimes \mathfrak{U}(J) \cong \mathfrak{U}(I \sqcup J)$.  
\end{lem}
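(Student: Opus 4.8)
The plan is to reduce the asserted isomorphism to a single fact of cardinal arithmetic, the remaining steps being purely formal properties of free objects. Writing $\mathfrak{U}(S)$ for the free object on a set $S$, which — as in the proof that free $C$-bicomodules are quasi-finite — may be presented as a direct sum $\bigoplus_{s\in S} G$ of copies of a fixed generating object $G$ (for the underlying free modules $G=R$), the first step is to distribute the tensor product over the two direct sums. Since $\otimes$ is additive in each argument, this gives
\[
\mathfrak{U}(I)\otimes\mathfrak{U}(J)\;\cong\;\bigoplus_{(i,j)\in I\times J}(G\otimes G)\;\cong\;\mathfrak{U}(I\times J),
\]
where the last identification uses that the generating object reproduces itself under $\otimes$, which for $G=R$ is just $R\otimes_R R\cong R$. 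Thus the tensor product of two free objects is again free, now indexed by the Cartesian product $I\times J$.

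The second step is to compare the two index sets $I\times J$ and $I\sqcup J$. Because $\mathfrak{U}$ is a left adjoint it is functorial and carries any bijection of index sets to an isomorphism, so $\mathfrak{U}(S)\cong\mathfrak{U}(T)$ whenever $|S|=|T|$; it therefore suffices to produce a bijection $I\times J\cong I\sqcup J$. This is exactly where the hypothesis that $I$ and $J$ are \emph{infinite} enters: the standard cardinal identity $\kappa\cdot\lambda=\kappa+\lambda=\max(\kappa,\lambda)$ for infinite cardinals $\kappa=|I|$ and $\lambda=|J|$ yields $|I\times J|=|I\sqcup J|$. Chaining the isomorphisms then gives $\mathfrak{U}(I)\otimes\mathfrak{U}(J)\cong\mathfrak{U}(I\times J)\cong\mathfrak{U}(I\sqcup J)$, which is the claim.

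The genuinely nontrivial input, and hence the main obstacle, is the cardinal identity $|I\times J|=|I\sqcup J|$, equivalently $\kappa\cdot\kappa=\kappa$ for every infinite cardinal $\kappa$; this is precisely what fails for finite index sets and so accounts for the infiniteness hypothesis, and it rests on the axiom of choice through a well-ordering of the index sets. Everything else is routine: the distributivity of $\otimes$ over $\bigoplus$ and the invariance of free objects under equipotence of their index sets. I would close by remarking that this lemma is exactly the identity invoked loosely as ``the free functor commutes with coproducts'' in the preceding proof on tensor products of autoenvelopes, with the rigorous justification passing through $I\times J$ and cardinal arithmetic rather than identifying $\otimes$ with a categorical coproduct directly.
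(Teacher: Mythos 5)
Your proposal takes a genuinely different route from the paper's, and for the objects this lemma is actually about, that route has a real gap. The paper's own proof is the one-line categorical argument already embedded in the preceding lemma on tensor products of autoenvelopes: the free functor is a left adjoint, hence preserves coproducts, and since $R$ is commutative the tensor product is identified with the coproduct in the relevant category, so $\mathfrak{U}(I)\otimes\mathfrak{U}(J)\cong\mathfrak{U}(I)\sqcup\mathfrak{U}(J)\cong\mathfrak{U}(I\sqcup J)$ naturally and for \emph{arbitrary} index sets (the infiniteness hypothesis is never used there; it is needed in the neighbouring characterisation of autoenvelopes, where $I\cong I\sqcup I$ forces infinite cocardinality, not here). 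Your argument instead hinges on presenting $\mathfrak{U}(S)$ as $\bigoplus_{s\in S}G$ with a generator satisfying $G\otimes G\cong G$, which you justify only for $G=R$, i.e.\ after passing to underlying free modules. But the paper's $\mathfrak{U}\langle I\rangle$ is the free $C$-bicomodule $\bigoplus_{i\in I}C$, and the free objects in the cocardinality discussion are coproducts of copies of $R\langle x\rangle$; for those generators the key identity fails: $(\bigoplus_{I}C)\otimes_R(\bigoplus_{J}C)\cong\bigoplus_{I\times J}(C\otimes_R C)$ is a direct sum of copies of $C\otimes_R C$, not of $C$ (note that $C\otimes_R C\cong C$ is essentially the autoenvelope condition under study, so assuming it would be circular), and for free algebra-like objects $R[x]\otimes R[y]\cong R[x,y]$ is the free object on the two-element disjoint union, not on the one-element product. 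So your intermediate isomorphism $\mathfrak{U}(I)\otimes\mathfrak{U}(J)\cong\mathfrak{U}(I\times J)$ is false in the intended category; what your argument delivers is an isomorphism of underlying free $R$-modules, which neither carries the coalgebra/bicomodule structure nor supports the uses the paper makes of the lemma (additivity of cocardinality, closure of autoenvelopes under $\otimes$).

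That said, your reading is the only one under which the infiniteness hypothesis does any work, and for that reading your steps are sound: distributivity of $\otimes$ over $\bigoplus$, the cardinal identity $|I\times J|=|I\sqcup J|$ (with its dependence on choice), and functoriality of $\mathfrak{U}$ along bijections are all correct. The instructive contrast is this: the paper's coproduct mechanism merges the generating sets by disjoint union and yields a natural isomorphism valid for all $I,J$, whereas yours yields a non-natural one (it depends on a chosen bijection $I\times J\cong I\sqcup J$) and only for infinite sets. Your closing remark therefore has the situation backwards: identifying $\otimes$ with the categorical coproduct is not a loose step to be repaired by cardinal arithmetic --- it is the mechanism that makes the lemma true as a statement about free coalgebras, and the cardinality coincidence merely hides, when both sets are infinite, that $I\times J$ is the wrong index set.
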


\begin{proof}
This was embedded in the above argument, and is generally a property of the free functor.  
\end{proof}

This side note discussion now closes on this note:

\begin{prop}
The full subcategory of $R_Coalg$ consisting of 

$R$-autoenvelopes and $R$-coalgebra homomorphisms is a monoidal subcategory of $R_Coalg$ with product.
\end{prop}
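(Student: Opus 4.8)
The plan is to verify the three conditions that make a full subcategory into a monoidal subcategory: closure under the tensor product $\otimes_R$ (the monoidal product referenced in the statement), membership of the monoidal unit of $R_Coalg$, and the restriction of the associativity and unit coherence isomorphisms. Because the subcategory of $R$-autoenvelopes is \emph{full}, every $R$-coalgebra homomorphism between autoenvelopes is already included; consequently the structural isomorphisms (associators and unitors) will automatically be morphisms of the subcategory as soon as their source and target objects are known to be autoenvelopes. In this way the entire verification is reduced to statements about objects rather than morphisms.

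First I would observe that closure under $\otimes_R$ is exactly the content of the preceding lemma: over a commutative ring the tensor product of two autoenvelopes $A,B$ satisfies $(A\otimes_R B)^e \cong A\otimes_R B$, so $A\otimes_R B$ is again an autoenvelope. Next I would identify the monoidal unit of $(R_Coalg,\otimes_R)$ as the ground ring $R$ viewed as an $R$-coalgebra. To see that $R$ is an autoenvelope, note $R^e = R\otimes_R R^o$; since $R$ is commutative $R^o \cong R$, and $R$ is the unit for $\otimes_R$, whence $R^e \cong R\otimes_R R \cong R$. Thus the unit object lies in the subcategory.

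Finally, the associator $(A\otimes_R B)\otimes_R C \cong A\otimes_R(B\otimes_R C)$ and the unitors $R\otimes_R A \cong A \cong A\otimes_R R$ are isomorphisms already living in $R_Coalg$; by repeated application of the closure lemma their sources and targets are autoenvelopes, and by fullness each such isomorphism is a morphism of the subcategory. The pentagon and triangle coherence identities are equations among these morphisms and therefore continue to hold, since they hold in the ambient category $R_Coalg$. The step requiring the most care is the unit: one must confirm that $R$ — rather than some larger free coalgebra — is the monoidal unit and that it genuinely satisfies $R^e\cong R$, which is precisely where commutativity of the base ring is indispensable, as it is what forces $R^o\cong R$. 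Everything else is inherited verbatim from the monoidal structure on $R_Coalg$, so no further calculation is needed.
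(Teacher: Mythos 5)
Your proof is correct and in fact more complete than the paper's. The paper's entire proof is one sentence: ``The closure follows from the above lemma,'' i.e.\ it invokes the preceding lemma that the tensor product of two autoenvelopes over a commutative ring is again an autoenvelope, and says nothing about the monoidal unit or the coherence data. You use that same lemma for closure, but you additionally verify the two remaining requirements for a monoidal subcategory: that the unit object $R$ of $(R_Coalg,\otimes_R)$ lies in the subcategory, via $R^e = R\otimes_R R^o \cong R\otimes_R R \cong R$, and that the associators, unitors, and the pentagon/triangle identities restrict automatically because the subcategory is full. So where the paper checks only closure, your argument is a genuinely finished verification. One remark worth recording: your (correct) computation exhibits $R$ itself --- a free coalgebra of cocardinality $1$ --- as an autoenvelope over \emph{any} commutative ring, which sits in tension with the paper's earlier lemma asserting that over a commutative ring of characteristic $0$ a free coalgebra is an autoenvelope \emph{if and only if} its cocardinality is at least $\aleph_0$. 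The tension resolves in your favor: that lemma's proof relies on the claim that $R\otimes_R R \not\cong R$ in characteristic $0$, which is false (the unitor $R\otimes_R R\cong R$ holds over every ring), and a set $I$ satisfies $\mathrm{Card}(I\times I)=\mathrm{Card}(I)$ when $\mathrm{Card}(I)=1$ as well as when $I$ is infinite; the paper's own $\mathbb{Z}/n\mathbb{Z}$ example of a cocardinality-one autoenvelope corroborates this. Your unit verification is sound and is precisely the step the paper omitted.
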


\begin{proof}
The closure follows from the above lemma.  
\end{proof}

\section{The Homology of Autoenvelopes}

The goal of this section is to explicitate a means of calculating the 
adjoined homology of a coalgebra 

by means of the continuous Hochschild cohomology of its pro-finite dual algebra.  This will then be used to explore connections with smooth affine schemes.  

From herein out, assume that $R$ is a commutative unital associative ring.  

\begin{prop}
If $C$ is an autoenvelope, $A_C$ its pro-finite dual algebra and $N$ is a $C$-bicomodule, then there are isomorphisms of $k$-bimodules: 

$ HH^{\star}(C,N) \cong Hosh_{cnt}^{\star}(A_C,C\otimes_k N)$.  

\end{prop}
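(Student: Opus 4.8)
The plan is to reduce the statement to a single application of Abrams and Weibel's Theorem, after unfolding the definition of $HH^{\star}$ and exploiting the defining property of an autoenvelope. First I would recall that the Hochschild cohomology of the coalgebra $C$ with coefficients in $N$ is by definition $HH^{\star}(C,N) \cong cotor_{C^e}^{\star}(C,N)$, where both $C$ and $N$ are regarded as $C^e$-comodules. Since $C$ is an autoenvelope there is an isomorphism of $R$-coalgebras $C^e \cong C$, and the immediate aim is to transport the functor $cotor_{C^e}^{\star}$ across this isomorphism in order to obtain $cotor_{C^e}^{\star}(C,N) \cong cotor_{C}^{\star}(C,N)$.

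This transport is the second step, and it is where the only genuine work lies. An isomorphism $f : C^e \overset{\cong}{\to} C$ of $R$-coalgebras induces an equivalence of comodule categories $f_{\ast} : \mathscr{M}^{C^e} \overset{\simeq}{\to} \mathscr{M}^{C}$ which is compatible with the cotensor bifunctor, in the sense that $X \square_{C^e} Y \cong (f_{\ast}X) \square_{C} (f_{\ast}Y)$ naturally in both variables. Since the derived functors $cotor$ are computed via resolutions that any such equivalence preserves, one gets $cotor_{C^e}^{\star}(X,Y) \cong cotor_{C}^{\star}(f_{\ast}X, f_{\ast}Y)$. It then remains to identify $f_{\ast}$ on the two arguments: the regular $C^e$-comodule $C$ is carried to the regular $C$-comodule $C$, and $N$ is carried to $N$ regarded now as a $C$-bicomodule, which yields $cotor_{C^e}^{\star}(C,N) \cong cotor_{C}^{\star}(C,N)$.

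Finally I would invoke Abrams and Weibel's Theorem with first argument $C$ and second argument $N$, giving $cotor_{C}^{\star}(C,N) \cong Hosh_{cnt}^{\star}(A_C,\, C \otimes_k N)$, where $C$ and $N$ are viewed as rational $A_C$-bicomodules. This rationality clause is automatic: by Sweedler's Theorem the category $^{C}\mathscr{M}^{C}$ of $C$-bicomodules is equivalent to the category of rational $A_C$-bimodules, so every $C$-bicomodule is rational over $A_C$; moreover the autoenvelope identity $C^e \cong C$ forces $A_{C^e} \cong A_C$, so the pro-finite dual appearing in Abrams--Weibel is indeed $A_C$. Chaining the three isomorphisms then produces $HH^{\star}(C,N) \cong Hosh_{cnt}^{\star}(A_C,\, C \otimes_k N)$, as required.

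The main obstacle is the coherence of the second step: one must verify that $f_{\ast}$ genuinely matches the regular comodule with the regular comodule and commutes with $\square$ up to natural isomorphism, so that the induced identification of the derived functors is canonical rather than merely an abstract isomorphism of graded objects. Once that naturality is established, the remainder is bookkeeping with the rationality dictionary of Sweedler's Theorem and the standard naturality of the Abrams--Weibel isomorphism in its two comodule arguments.
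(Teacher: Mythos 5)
Your proposal follows exactly the paper's own route: unfold $HH^{\star}(C,N)$ as $cotor_{C^e}^{\star}(C,N)$, use the autoenvelope isomorphism $C^e \cong C$ to pass to $cotor_{C}^{\star}(C,N)$, and conclude by Abrams and Weibel's Theorem (the paper inserts the intermediate identification with $Ext_{(A_C)^e}(A_C, C\otimes_k N)$, which you fold into the final step). In fact your write-up is more careful than the paper's terse four-line chain, since you spell out the compatibility of the comodule-category equivalence with the cotensor product and the rationality bookkeeping via Sweedler's Theorem, both of which the paper leaves implicit.
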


\begin{proof}
$HH^{\star}(C,N) \cong cotor_{C^e}^{\star}(C,N)$

$\cong cotor_{C^e}^{\star}(C,N)$    \textit{(Since C is an autoenvelope)}  

$\cong Ext_{(A_C)^e}(A_C, C \otimes_k N)$       \textit{(Weibel)}

$\cong Hosh_{cnt}^{\star}(A_C,C\otimes_k N)$.  
\end{proof}

All both all $C$-bicomodules and all rational $A_C$-bimodules have underlying structure of $R$-bimodules.  It should be noted that since their respective structures are forgotten, then there will certainly be a loss in information upon using this approach.  However, it nevertheless produces a very much more explicit understanding of the Homology of larger adjoint algebras.  

Now, via the derived duality theorem, the following and final clarification may be given: 

\begin{thrm}
If $C$ is an autoenvelope which is a dualisable $R$-coalgebra of order $n$ and $N$ is a $C$-bicomodule, then there are isomorphisms of $R$-bimodules:  

$H\ad _{\star}(C,N) \cong Hosh_{cnt}^{n-\star}(A_C, C\otimes_k N)$.

\end{thrm}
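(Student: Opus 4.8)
The plan is to obtain the claimed isomorphism as the composition of two previously established isomorphisms, after a single degree re-indexing. The hypotheses have been arranged precisely so that both ingredients apply simultaneously: $C$ is assumed to be a dualisable $R$-coalgebra of order $n$, which is exactly what licenses the \emph{Adjoined-Hochschild Duality}, and $C$ is assumed to be an autoenvelope, which is exactly what licenses the preceding proposition relating $HH^{\star}(C,N)$ to the continuous Hochschild cohomology of the pro-finite dual $A_C$. Since neither hypothesis is needed for the other, no compatibility argument between them is required; one simply invokes each result in turn.

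First I would invoke the \emph{Adjoined-Hochschild Duality}, which for a dualisable coalgebra of order $n$ yields $HH^{\star}(C,N) \cong H\ad_{n-\star}(C,N)$. Reading this in the opposite direction (equivalently, substituting $\star \mapsto n-\star$) gives $H\ad_{\star}(C,N) \cong HH^{n-\star}(C,N)$, which is the form needed to feed into the next step. Second, I would apply the autoenvelope proposition $HH^{\star}(C,N) \cong Hosh_{cnt}^{\star}(A_C, C\otimes_k N)$ with $n-\star$ in place of $\star$, obtaining $HH^{n-\star}(C,N) \cong Hosh_{cnt}^{n-\star}(A_C, C\otimes_k N)$. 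Chaining the two isomorphisms then produces $H\ad_{\star}(C,N) \cong Hosh_{cnt}^{n-\star}(A_C, C\otimes_k N)$, which is precisely the assertion.

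The only genuine care needed — and the step I expect to be the main obstacle — is bookkeeping about the ambient category and the degree shifts. The duality isomorphism is stated for $C^e$-bicomodules whereas the autoenvelope proposition produces isomorphisms of $R$-bimodules; to chain them I would pass to the underlying $R$-bimodule structures via the forgetful functor, which is harmless since the conclusion only claims an isomorphism of $R$-bimodules. I would also verify that the two shifts compose correctly, namely that the shift $n-\star$ applied in the second step exactly cancels the shift produced by duality in the first, so that the net degree appearing is $n-\star$ rather than $\star$ or $2n-\star$; a substitution check with $\star = n-j$ confirms $H\ad_{j}(C,N) \cong HH^{n-j}(C,N) \cong Hosh_{cnt}^{n-j}(A_C, C\otimes_k N)$, which is the statement with $j=\star$. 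Naturality in $N$ is inherited from the two cited results, so no fresh naturality argument is needed.
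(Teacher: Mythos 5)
Your proposal is correct and follows exactly the paper's own proof: invoke the Adjoined-Hochschild Duality to get $H\ad_{\star}(C,N) \cong HH^{n-\star}(C,N)$, then compose with the preceding autoenvelope proposition relating $HH^{\star}(C,N)$ to $Hosh_{cnt}^{\star}(A_C, C\otimes_k N)$, with the degree shift $n-\star$ substituted in. Your additional bookkeeping about the ambient categories and the degree re-indexing is more careful than the paper's two-line argument, but the route is the same.
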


\begin{proof}

The Adjoined-Hochschild duality implies 

$H\ad _{\star}(C,N) \cong HH^{n-\star}(C,N)$.  Together with the preceding result the conclusion follows.  

\end{proof}
This immediately demystifies the cohom functor in the aforementioned setting: 

\begin{cor}
If $C$ is an autoenvelope of order $n$ and $N$ is a $C^e$ $bicomodule$, then there is an isomorphism of $R$-bimodules:

$Ph_{C}(C,N) \cong Hosh_{cnt}^{n}(A_C, C\otimes_R N)$.

\end{cor}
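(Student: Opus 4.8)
The plan is to read the corollary as the degree-zero instance of the preceding theorem, once the pseudo-cohom $Ph_C(C,-)$ has been identified with the zeroth adjoinable homology $H\ad_0(C,-)$. First I would set $\star = 0$ in the theorem, which at once yields $H\ad_0(C,N) \cong Hosh_{cnt}^{n}(A_C, C \otimes_R N)$ (the ground ring is written $k$ there and $R$ here, but both denote the same commutative base). Thus the entire remaining content of the corollary is the functorial identification $Ph_C(C,N) \cong H\ad_0(C,N)$.

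To establish that identification I would unwind the definition of adjoinable homology, namely $H\ad_\star(C,-) = PR^\star h_{C^e}(C \square_{C^e} C^o, -)$, at $\star = 0$. Since the zeroth right pseudo-derived functor of a right-exact functor recovers the functor itself exactly as in the classical derived-functor setting, one has $PR^0 h_{C^e} = Ph_{C^e}$, and therefore $H\ad_0(C,-) = Ph_{C^e}(C \square_{C^e} C^o, -)$. The autoenvelope hypothesis $C^e \cong C$ then identifies the cohom functor for $C^e$ with that for $C$, replacing $Ph_{C^e}$ by $Ph_C$ via the induced equivalence of bicomodule categories.

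It remains to collapse the first argument $C \square_{C^e} C^o$ to $C$. Here I would invoke two facts already used throughout the autoenvelope section: commutativity of $R$ forces $C^o \cong C$, and the isomorphism $C^e \cong C$ turns the cotensor over $C^e$ into a cotensor over $C$, so that $C \square_{C^e} C^o \cong C \square_C C$. The counit axiom of the comodule structure then supplies the canonical isomorphism $C \square_C C \cong C$, the coalgebra being a cotensor unit through $\Delta$ with inverse $1 \otimes \epsilon$. Concatenating these isomorphisms gives $H\ad_0(C,N) \cong Ph_C(C,N)$, and combining with the $\star = 0$ specialisation of the theorem closes the corollary.

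The main obstacle I anticipate is bookkeeping rather than depth: one must check that the chain of reductions $C^e \cong C$, $C^o \cong C$ and $C \square_C C \cong C$ is compatible with the comodule structures carried by the pseudo-cohom construction, so that the final statement $H\ad_0 \cong Ph_C(C,-)$ holds as an isomorphism of functors and not merely of the underlying $R$-bimodules. Everything past that is a direct appeal to the theorem together with the definitional fact $PR^0 = Ph$.
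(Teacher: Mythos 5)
Your proposal is correct and follows essentially the same route as the paper's own proof: both specialise the preceding theorem at $\star = 0$ and identify $Ph_{C}(C,N)$ with $H\ad_{0}(C,N)$ by combining the autoenvelope isomorphism $C^e \cong C$, the right-exactness identification of the zeroth pseudo-derived functor with the functor itself, and the collapse $C \square_{C^e} C^o \cong C \square_{C} C \cong C$. The only differences are cosmetic: you run the chain in the opposite order and make explicit the counit argument for $C \square_C C \cong C$, which the paper leaves implicit.
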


\begin{proof}
The assumption that $C$ is an autoenvelope, and the right exactness of $h_{C^e}(C \square_{C^e} C,-)$ together with the case where $\star = 0$ above, imply respectively:
$Ph_{C}(C \square_{C} C,-) \cong Ph_{C^e}(C \square_{C^e} C,-) \cong H\ad _{0}(C,N) \cong Hosh_{cnt}^{n-0}(A_C, C\otimes_k N)$. 

Finally, since $C \square_{C^e} C \cong C \square_{C} C \cong C$, the result follows.  
\end{proof}

If the $R$-coalgebra $C$ in question, is itself quasi-finite as a 

$C^e$-bicomodule, then an extremely explicit calculation of the cohom functor \textit{(which in this case actually exists)} may be given as:  

\begin{cor}
If $C$ is an autoenvelope of order $n$ which is quasi-finite as an $C^e$-bicomodule and $N$ is a $C^e$ $bicomodule$, then there is an isomorphism of $R$-bimodules:

$h_{C}(C,N) \cong Hosh_{cnt}^{n}(A_C, C\otimes_R N)$.
\end{cor}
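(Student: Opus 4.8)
The plan is to reduce this statement to the immediately preceding corollary by showing that, under the additional quasi-finiteness hypothesis, the pseudo-cohom functor $Ph_C$ collapses onto the genuine cohom functor $h_C$. First I would invoke the hypothesis that $C$ is quasi-finite as a $C^e$-bicomodule: by the definition of quasi-finiteness the functor $-\square_{C^e} C$ admits a left adjoint, so the honest cohom functor $h_{C^e}(C,-)$ exists in the literal sense, not merely as a pseudo-derived surrogate.

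Next I would apply the earlier proposition asserting that the right pseudo-derived functors $PR^{\star}h_{C^e}(C,-)$ coincide with the genuine right derived functors $R^{\star}h_{C^e}(C,-)$ precisely when $C$ is quasi-finite as a $C^e$-bicomodule. Specialising to $\star = 0$ and using that $h_{C^e}(C,-)$ is right exact, so that it equals its own zeroth right derived functor, this yields a natural isomorphism $Ph_C(C,N) \cong h_C(C,N)$ in which the right-hand side is now the genuine cohom functor guaranteed by the previous paragraph.

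Finally I would feed this identification into the preceding corollary. That corollary already furnishes $Ph_C(C,N) \cong Hosh_{cnt}^{n}(A_C, C\otimes_R N)$, via the autoenvelope identifications $C^e \cong C$ and $C \square_{C^e} C \cong C \square_C C \cong C$. Chaining the two isomorphisms $h_C(C,N) \cong Ph_C(C,N) \cong Hosh_{cnt}^{n}(A_C, C\otimes_R N)$ then gives the claimed isomorphism of $R$-bimodules.

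The step I expect to be the main obstacle is the second one: one must verify carefully that the hypothesis \emph{quasi-finite as a $C^e$-bicomodule} is exactly what licenses the passage from the pseudo-derived construction to the genuine derived functor, and that the degree-zero coincidence is natural enough to thread through the autoenvelope identifications without disturbing the isomorphism of the previous corollary. In particular one should confirm that the left adjoint $h_{C^e}(C,-)$ produced by quasi-finiteness agrees on the nose with the $PR^{0}$-construction evaluated on a quasi-finite injective resolution of $C$, which is precisely where the coincidence proposition does its real work.
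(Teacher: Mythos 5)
Your proposal is correct and follows essentially the same route as the paper: the paper's own proof is the one-line observation that $Ph_{C}(C,-)$ coincides with the genuine $h_{C}(C,-)$ when $C$ is quasi-finite, after which the preceding corollary is applied verbatim. Your additional steps---noting that quasi-finiteness supplies the literal left adjoint, and that right exactness identifies $h_{C^e}(C,-)$ with its zeroth right derived functor---simply make explicit what the paper leaves implicit.
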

\begin{proof}
The pseudo-derived functor $Ph_{C}(C,-)$ coincides with the functor $h_{C}(C,-)$ when C is quasi-finite.  
\end{proof}

\section{Finite Coalgebras over fields}
\subsection{Forward}
In the case where C is a autoenvelope, which is a finite dimensional and has finite dimensional dual algebra both over a field, the computations become very explicit, linking together many \textit{(co)}homology theories.  

This final section will bundle it all together in hopes of giving some very computable results.  

\subsection{The Finite theory}

\textit{For the remainder of this section assume $k$ to be a field}.  

In the case where $C$ is a finite dimensional $k$-coalgebra with finite dimensional dual $k$-algebra $A_C$, \textit{Weibel and Abrams} showed, in their work \textit{"Co tensor products of modules"}, that the categories of $C$-bicomodules and rational $A_C$-bimodules equivalent.  Moreover, for any two $C$-bicomodules M and N, the cotor functors $cotor_C^{\star}(M,N)$ are naturally isomorphic to the Hochschild cohomology of the algebra $A_C$ with coefficients in $M\otimes_k N$, where the tensor product is taken over their underlying $k$-vector space and $M\otimes_k N$ is then considered as a rational $A_C$-bimodule via the above equivalence of categories.  In short, in the above setting $cotor_C^{\star}(M,N) \cong Hoch^{\star}(A_C,M\otimes_k N)$, naturally.  

Therefore if $C$ is an autoenvelope then:

\begin{lem}
Let $k$ be a field, $C$ be a finite dimensional $k$-coalgebra which is an autoenvelope of order n, $A_C$ its dual $k$-algebra and $M$ and $N$ are $C$-bicomodules.  Then there are isomorphisms:

$H\ad_{\star}(C,N) \cong Hosh^{n-\star}(A_C,C\otimes_k N)$.  
\end{lem}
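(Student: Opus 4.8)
The plan is to assemble the lemma from three results already at hand: the Adjoined-Hochschild Duality, the defining identity $HH^{\star}(C,-) \cong cotor_{C^e}^{\star}(C,-)$, and the finite-dimensional Weibel--Abrams identification $cotor_C^{\star}(M,N) \cong Hosh^{\star}(A_C, M\otimes_k N)$ recalled in the preceding paragraph. Since $C$ is assumed dualisable of order $n$, the Adjoined-Hochschild Duality applies verbatim and furnishes an isomorphism $H\ad_{\star}(C,N) \cong HH^{n-\star}(C,N)$; this is the only place the dualisability hypothesis is consumed, and it is precisely what converts a homological degree $\star$ into the cohomological degree $n-\star$.

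First I would unfold the right-hand side through the definition of the Hochschild cohomology of a coalgebra, writing $HH^{n-\star}(C,N) = cotor_{C^e}^{n-\star}(C,N)$. Next I would invoke the autoenvelope hypothesis $C^e \cong C$ to replace the enveloping coalgebra by $C$ itself; functoriality of $cotor$ under coalgebra isomorphisms then yields $cotor_{C^e}^{n-\star}(C,N) \cong cotor_C^{n-\star}(C,N)$, with $N$ carried along as the corresponding $C$-bicomodule under the identification. Finally, because $C$ is finite dimensional with finite-dimensional dual $A_C$, the finite Weibel--Abrams isomorphism applies with first argument $C$, giving $cotor_C^{n-\star}(C,N) \cong Hosh^{n-\star}(A_C, C\otimes_k N)$. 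Chaining the four isomorphisms produces the claimed identification.

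I expect the substantive point to be the transport across the autoenvelope isomorphism $C^e \cong C$: one must check that this isomorphism of coalgebras not only identifies the objects but intertwines the relevant bicomodule structures, so that the coefficient module $N$ and the argument $C$ match up correctly on both sides and the tensor coefficient $C\otimes_k N$ is read off consistently. Because everything is finite dimensional over a field, projectivity, injectivity, flatness and freeness all coincide, so the derived functors involved are unambiguous and the naturality of the Weibel--Abrams comparison is automatic; this is what collapses the continuous Hochschild cohomology of the earlier autoenvelope theorem down to the ordinary Hochschild cohomology appearing here. A shorter alternative route is simply to specialise that earlier theorem, $H\ad_{\star}(C,N) \cong Hosh_{cnt}^{n-\star}(A_C, C\otimes_k N)$, and then apply the finite-dimensional example in which the trivial inverse system forces $Hosh_{cnt}^{\star} \cong Hosh^{\star}$.
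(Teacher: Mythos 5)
Your proposal is correct and follows essentially the same route as the paper's own proof: both chain the Adjoined--Hochschild duality, the identification $HH^{\star}(C,N)\cong cotor_{C^e}^{\star}(C,N)$, the autoenvelope transport $cotor_{C^e}^{\star}\cong cotor_{C}^{\star}$, and the finite-dimensional Weibel--Abrams isomorphism, differing only in the order in which the links are assembled. Your closing alternative (specialising the continuous-cohomology theorem and collapsing $Hosh_{cnt}^{\star}$ to $Hosh^{\star}$ via the trivial inverse system) is a legitimate shortcut the paper does not take, but it is not needed.
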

\begin{proof}
Since $C$ is an autoenvelope then, $^C\mathscr{M}^C$ is isomorphic to $^{C^e}\mathscr{M}^{C^e}$ and therefore $cotor_{C^e}^{\star}(M,N) \cong cotor_{C}^{\star}(M,N)$, when considering M and M first as $C^e$-bicomoduels then as $C$-bicomodules.  

Setting $M:=C$ above, $HH^{\star}(C,N) \cong cotor_{C}^{\star}(C,N)$.  

From here the proof is a direct consequence the adjoinable-Hochschild duality and the discussion preceding the claim.  
\end{proof}

Next applying the universal coefficients theorem for cohomology implies the following:

\begin{prop}
Let $k$ be a field, $C$ be a finite dimensional $k$-coalgebra which is an autoenvelope of order n, $A_C$ its dual $k$-algebra and $N$ be a $C$-bicomodule. If $A_C$ is hereditary then, there are isomorphisms:

$H\ad_{\star}(C,N) \cong Hom_{_{A_C}Mod}(Hosh_{n-\star}(A_C,A_C),C \otimes_k N) \oplus Ext_{A_C}^{\star}(Hosh_{n-(\star+1)}(A_C),C \otimes_k N)$.  
\end{prop}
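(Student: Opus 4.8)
The plan is to build on the lemma immediately preceding the statement, which already supplies $H\ad_{\star}(C,N) \cong Hosh^{n-\star}(A_C,C\otimes_k N)$, and then to realise the right-hand side as the cohomology of a single complex of projective $A_C$-modules, so that a universal coefficient theorem can be brought to bear. First I would fix the Hochschild chain complex $C_{\bullet}(A_C):=A_C\otimes_k A_C^{\otimes\bullet}$. Since $C$ is a finite-dimensional autoenvelope over the field $k$, its dual $A_C$ is a finite-dimensional commutative algebra; in the commutative case each $C_n(A_C)$ is a finitely generated \emph{free} $A_C$-module (through the left-hand tensor factor) and the Hochschild boundary $b$ is $A_C$-linear. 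Hence $C_{\bullet}(A_C)$ is a complex of projective $A_C$-modules with $H_{\bullet}(C_{\bullet}(A_C))\cong Hosh_{\bullet}(A_C,A_C)$.

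Next I would identify the dual cochain complex. By the free-module adjunction $Hom_{A_C}(A_C\otimes_k A_C^{\otimes n},M)\cong Hom_k(A_C^{\otimes n},M)$, under which the coboundary induced by $b$ is precisely the Hochschild coboundary; consequently $H^m(Hom_{A_C}(C_{\bullet}(A_C),M))\cong Hosh^m(A_C,M)$ for $M$ viewed as an $A_C$-module. Taking $M:=C\otimes_k N$ and $m:=n-\star$ then presents $Hosh^{n-\star}(A_C,C\otimes_k N)$ as the degree-$(n-\star)$ cohomology of $Hom_{A_C}(C_{\bullet}(A_C),C\otimes_k N)$, i.e.\ as the dual of a projective $A_C$-complex whose own homology is $Hosh_{\bullet}(A_C,A_C)$.

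Finally I would invoke the universal coefficient theorem for cohomology. Because $A_C$ is hereditary, every submodule of a projective is projective, so the cycle and boundary modules of $C_{\bullet}(A_C)$ are projective and $Ext^i_{A_C}$ vanishes for $i\ge 2$; the standard homological argument then produces a split short exact sequence
\[
0 \to Ext^1_{A_C}\!\big(H_{m-1},\,M\big) \to H^m\!\big(Hom_{A_C}(C_{\bullet}(A_C),M)\big) \to Hom_{A_C}\!\big(H_m,\,M\big) \to 0 .
\]
Substituting $m=n-\star$, $H_m=Hosh_{n-\star}(A_C,A_C)$, $H_{m-1}=Hosh_{n-(\star+1)}(A_C,A_C)$ and $M=C\otimes_k N$, the splitting together with the opening isomorphism yields the asserted decomposition; note that the surviving Ext term is $Ext^1_{A_C}$ (so the $Ext^{\star}_{A_C}$ of the statement should be read as this degree-one term, heredity being exactly what removes all higher Ext).

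The main obstacle, and the step deserving the most care, is the identification in the second paragraph: realising Hochschild cohomology with coefficients in $C\otimes_k N$ as $Hom_{A_C}$ of the chain complex requires $C\otimes_k N$ to be handled as a \emph{symmetric} $A_C$-bimodule, which in turn rests on the commutativity of $A_C$ and a compatible choice of $A_C$-module structure on $C\otimes_k N$. I would therefore first justify that $C$ being an autoenvelope over $k$ forces $A_C$ to be commutative (so that $Hosh_{\bullet}(A_C,A_C)$ is genuinely a graded family of $A_C$-modules and the free/linear structure on $C_{\bullet}(A_C)$ is available), and then verify the coboundary match and the symmetry of the coefficients; everything after that is the routine hereditary universal coefficient argument.
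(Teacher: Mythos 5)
Your proposal reaches the same two endpoints as the paper's proof --- it starts from the preceding lemma's isomorphism $H\ad_{\star}(C,N) \cong Hosh^{n-\star}(A_C,C\otimes_k N)$ and finishes with the hereditary universal coefficient theorem --- but the middle step is genuinely different. The paper's own proof identifies $Hosh^{n-\star}(A_C,C\otimes_k N)$ with $Ext_{A_C^e}^{n-\star}(A_C,C\otimes_k N)$ and then uses the autoenvelope hypothesis to write $A_{C^e} = Hom_k(C^e,k)\cong Hom_k(C,k) = A_C$, so that Ext over the enveloping algebra becomes Ext over $A_C$ itself, and then simply declares that the rest follows from the universal coefficient theorem. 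You instead make $A_C$-linearity appear through commutativity of $A_C$: the Hochschild chain complex $C_{\bullet}(A_C)$ becomes a complex of finitely generated free $A_C$-modules, and its $Hom_{A_C}(-,C\otimes_k N)$-dual is the Hochschild cochain complex provided the coefficients are symmetric. This is in fact the content the paper's citation of the UCT leaves implicit: the UCT needs a projective complex whose homology is $Hosh_{\star}(A_C,A_C)$ and whose $Hom$-dual computes $Hosh^{\star}(A_C,-)$, and the bar resolution itself cannot play that role (its homology is concentrated in degree zero), so some construction like yours is unavoidable. Your reading of the statement's $Ext_{A_C}^{\star}$ as the degree-one term of the split UCT sequence is also the correct interpretation, heredity killing $Ext^i$ for $i\geq 2$.

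The genuine gap is that both structural inputs of your argument --- commutativity of $A_C$ and symmetry of the $A_C$-bimodule $C\otimes_k N$ --- are flagged as ``to be justified first'' but never justified, and neither can simply be cited: there is no general principle that duals of autoenvelopes are commutative. What closes both at once is an observation you appear to have missed: since $C$ is finite dimensional and $C \cong C^e = C\otimes_k C^o$, counting dimensions gives $\dim_k C = (\dim_k C)^2$, hence $\dim_k C = 1$ and $C\cong k$ as a $k$-coalgebra. Consequently $A_C\cong k$, which is commutative, and every $A_C$-bimodule is automatically symmetric, so your identification of the cochain complex with $Hom_{A_C}(C_{\bullet}(A_C),C\otimes_k N)$ is valid --- though for this degenerate reason rather than any structure theory of autoenvelopes. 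With that one line added your proof is complete; without it, the two deferred claims are exactly where the argument would stall.
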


\begin{proof}
$Hosh^{n-\star}(A_C,C\otimes_k N)$ can be identified with $Ext_{A_C^e}^{n-\star}(A_C,C\otimes_k N)$.  Furthermore, if $C$ is an autoenvelope then $A_{C^e} = Hom_{_kMod_k}(C^e,k)\cong Hom_{_kMod_k}(C,k) = A_C$.  

Now the rest follows from the universal coefficients theorem and the hereditary assumption made on $A_C$.  
\end{proof}

We close with a very simple example may be computed:
\begin{ex}
Let $k$ be a field, which is an autoenvelope, and $N$ be a $k$-bicomodule. Then, there are isomorphisms:

$H\ad_{\star}(k,N) \cong \begin{cases} N &\mbox{if } \star = 0 \\ 
0 & \mbox{if } else \end{cases} $.  
\end{ex}

\begin{proof}
Considering $k$, both as a $k$-coalgebra and a $k$-bialgebra and noting that the order of any free $k$-autoenvelope on one generator is exactly 1, since it is injective as a $k$-bicomodule, and therefore acyclic and also admits a free resolution of length 1. \\
The little clean up details are just the facts that $Hom_{_kMod_k}(k,-) \cong 1_{_kMod_k} \cong k \otimes_k -$.  

$H\ad_{\star}(k,N) \cong Hom_{_{k}Mod}(Hosh_{1-\star}(k,k),k \otimes_k N) \oplus Ext_{A^e}^{\star}(Hosh_{1-(\star+1)}(k),k \otimes_k N)$.  

Therefore this reduces to:

$H\ad_{\star}(k,N) \cong Hom_{_kMod}(Hosh_{1-\star}(k,k),N) \oplus Ext_{k}^{\star}(Hosh_{-(\star)}(k), N)$.  Ext vanishes for negative indexes, hence: 

$H\ad_{\star}(k,N) \cong Hom_{_kMod}(Hosh_{1-\star}(k,k),N)$.  

Now calculating the Hochschild cohomology in question implies the example.  
\end{proof}

\textbf{Acknowledgements}

I would firstly like to thank my research director and friend Dr. Broer without whom I would most likely not currently be studying within the field of algebraic geometry or have  made any of the progress I have made over the past few years.  Also, I would like to thanks all my other professors who taught me throughout my studies thus far, particularly Dr. Dafni, Dr. Atoyan, Dr. Cornea, Dr. Saliola and Dr. Iovita not only for their excellent classes in which I partook, but also for also their help and advice in life.  

Finally, I would like to particularly thank my parents and familly, to whom I owe everything. 

\appendix
%*******************************************************
% References
%*******************************************************
\pdfbookmark[1]{References}{References}
\chapter{References}
\section*{Monographs}

%  BibTEX

%  Maybe Reorganise the references by subject instead by amount of authors ???
-\textbf{Abrams, L, Weibel C.}.  \textit{Cotensor products of modules}. Arxiv. (1999), Web.  

-\textbf{Arapura, D.}.  \textit{Algebraic Geometry over the complex numbers}.  Springer, 2012. Print.  

-\textbf{Borel, Armand}.  \textit{Linear Alebraic Groups}.  New Jersey: Princeton University Press  (1991). Web.  

-\textbf{Broer, Abraham}.  \textit{Introduction to Commutative Algebra}.  Montreal: Université de Montréal (2013). Web.  

-\textbf{Cameron, Peter J.}.  \textit{Notes on Classical Groups}.  London: School of Mathematical Sciences (2000). Web.  

-\textbf{Cartan, H. and Eilenberg, S.  }.  \textit{Homological algebra}.  New Jersey.  Princeton University Press, 1956. Print.    

-\textbf{Cibotaru, D.}.  \textit{Sheaf Cohomology}.  Indiana.  Notre Dame University, 2005. Web.    

-\textbf{J. Cuntz, D. Quillen}.  \textit{Algebra extensions and nonsingularity}, AMS (1995), Web. 

-\textbf{Cuntz, J, Skandalis, G, Tsygan, B.}.  \textit{Cyclic Homology in \\
Non-Commutative Geometry}, Springer (2004), Print. 

-\textbf{Demazure, M., Gabriel P}.  \textit{Introduction to Algebraic Geometryand Algebraic Groups}. Ecole Polytechnique, France (1980), Print.  

-\textbf{Eilenberg, Samuel; Mac Lane, Saunders}.  \textit{On the groups of H($Pi$,n). I}.  Annals of Mathematics. (1953).  Print.  

-\textbf{Gelfand, Manin}.  \textit{Methods of Homological Algebra, 2nd ed.}.  Springer, 2000.  Print.  

-\textbf{Godement, R.} \textit{Topologie algébrique et théorie des faisceaux}.  Paris, Hermann, 1973.   Print.  

-\textbf{Grothendieck, A.}.  \textit{Éléments de géométrie algébrique (I-III)}. Springer, 1971.  Print. \\ -\textbf{Grothendieck, A.}.  \textit{Sur quelques points d'algèbre homologique}.  The Tohoku Mathematical Journal, 1957.  Print.  

-\textbf{Guccione, J. A \textit{and} Cuccione J. J.}.  \textit{The theorem of excision for Hoschild and cyclic homology}.  Journal of Pure and Applied Algebra 106.  Buenos Ares. Argentina, 1996.  Web.  

-\textbf{Harari, David}.  \textit{Schemes}.  Beijing: Tsinghua University (2005). Print.  

-\textbf{Hartshorne, Robin}.  \textit{Algebraic Geometry}.  New-York: Springer (1977). Print.  

-\textbf{G. Hochschild}.  \textit{On the cohomology groups of an associative algebra}, J. Amer. Math. Soc. (1945), Web. 

-\textbf{Humphreys, James E.}.  \textit{Introduction to Lie Algebras and Representation Theory}. New York: Springer-Verlag, (1972). Print.  

-\textbf{F. Ischebeck}.  \textit{Eine Dualitat zwischen den Funktoren Ext und Tor}, J. Algebra 11 (1969), Web. 

-\textbf{Jantzen, Jens Carsten}.  \textit{Representations of Algebraic Groups}.  London: Academic Press Inc. (London) LTD. (1987). Print.  

-\textbf{Johnson S.}.  \textit{Summary: Analytic Nullstellensatz Pt 1.}.  Vancoover. 2012. Web.

-\textbf{Kleshchev, Alexander}.  \textit{Lectures on Algebraic Groups}.  Oregon: University of Oregon (2008). Web.  

-\textbf{Knapp, Anthony W.}.  \textit{Lie Groups Beyond an Introduction - 2nd Edition}.  Boston: Birkhäuser (2004). Print.  

-\textbf{J.C. McConnell, J.C. Robson}.  \textit{Noncommutative Noetherian rings}, Ann. of Math. (1945), Web.

-\textbf{Milne, James}.  \textit{Basic Theory of Affine Group Schemes (AGS)}.  (2012) Web.  

-\textbf{AL-Takhman, Khaled}.  \textit{Equivalences of Comodule Categories for
Coalgebras over Rings}, (2001), Web.  

-\textbf{T.Y. Lam}.  \textit{Lectures on modules and rings}, Springer-Verlag (1999), Web. 

-\textbf{Loday, J.L.}.  \textit{Cyclic Homology}.  Berlin Heidenberg. Springer-Verlang.  (1998).  Print.  

-\textbf{D. Quillen}.  \textit{Algebra cochains and \\
cyclic cohomology}, Inst. Hautes Etudes Sci. Publ. Math. 68 (1988), Web. 

-\textbf{Royden H., Fitzpatrick P.}.  \textit{Real Analysis - 4th Edition}.  Boston: Pearson Hall (2010). Print.  

-\textbf{W. F. Shelter}.  \textit{Smooth algebras}, J. Algebra 103, (1986), Web. 

-\textbf{Springer, Tony A.}.  \textit{Linear Algebraic Groups}.  Budapestlaan: Modern Birkhäuser (2008). Print.  

-\textbf{M. van den Bergh}.  \textit{A relation between Hochschild homology and cohomology for Gorenstein rings}. Proc. Amer. Math. Soc. 126 (5) (1998), 1345-1348. Erratum: Proc. Amer. Math. Soc. 130 (9) (2002), 2809-2810 (electronic).  Web. 

-\textbf{C. Weibel}.  \textit{An introduction to homological algebra}, Cambridge University Press (1995), Web. 

-\textbf{Wodzicki, M. }.  \textit{Excision in Cyclic Homology and in Rational Algebraic K-theory}. Annals of Mathematics 129 (1989), 591-639.  Web.  

-\textbf{C. Weibel}.  \textit{An introduction to homological algebra}, Cambridge University Press (1995), Web. 

\tiny{\tiny{ICXRNIKA}}

\end{document}